\documentclass[11pt]{amsart} 
\usepackage{verbatim, latexsym, amssymb, amsmath,color,mathrsfs}
\usepackage{esint}
\usepackage{bm}
\usepackage{enumitem}
\usepackage[hidelinks]{hyperref}
\usepackage{xcolor}

\def\R{\mathbb R}
\def\Q{\mathbb Q}

\def\RP{\mathbb {RP}}
\def\N{\mathbb N}
\def\Z{\mathbb Z}

\newcommand{\T}{\mathbb T}

\def\tr{\operatorname{tr}}

\def\vol{\mathrm{vol}}

\def\area{\mathrm{area}}

\newtheorem{thm}{Theorem}[section]
\newtheorem{lemm}[thm]{Lemma}
\newtheorem{cor}[thm]{Corollary}
\newtheorem{prop}[thm]{Proposition}
\theoremstyle{remark}

\theoremstyle{definition}

\title[]{Rigidity and non-rigidity of the stable norm on $T^n$}
\author{Fernando C. Marques, Andr\'e Neves, and Ao Sun}
\address{Princeton University \\ Fine Hall \\ Princeton NJ 08544 \\ USA}
\email{coda@math.princeton.edu}
\address{University of Chicago \\ Department of Mathematics \\ Chicago IL 60637\\ USA}
\email{aneves@math.uchicago.edu}
\address{Lehigh University \\ Department of Mathematics \\ Bethlehem PA 18015\\ USA}
\email{{aosun@lehigh.edu}}
\thanks{The first author is partly supported by NSF-DMS-2506810 and a Simons Investigator Grant. The second author is partly supported by NSF-DMS-2005468 and a Simons Investigator Grant.}

\begin{document}
	\maketitle
	\begin{abstract}
		We show that the stable norm of flat metrics on $H_{d}(T^n,\R)$  is locally rigid if $1\leq d<n-1$ and locally rigid among metrics of the same volume if $d=n-1$. We also show that the stable norm on  $H_{2}(T^3,\R)$ is not locally rigid.
		
		As applications, we answer negatively a question raised by Bangert in his ICM address,  prove local rigidity of the marked $k$-area spectrum of flat metrics for $1\leq k\leq n-2$, and prove a local rigidity result for the volume spectrum of flat metrics on $T^n$.
	\end{abstract}
	\section{Introduction}
	A classical problem in geometry has been to determine whether the lengths of geodesics determine the ambient metric. Most positive results concern the ambient space being negatively curved. 
	
	The marked length spectrum assigns to each free homotopy class the length of its shortest representative.
	In the 1980s, Katok (see \cite{BurnsKatok}) conjectured that if two negatively curved metrics $g_1$ and $g_2$ have the same marked length spectrum, then they must be isometric. In dimension two, this was first proved by Croke \cite{Croke} and Otal \cite{Otal} independently. The local version of the analogous rigidity conjecture in higher dimensions was proved in \cite{GuillarmouLefeuvre} and Hamenst\"adt \cite{Hamenstaedt1999} showed that hyperbolic metrics are uniquely determined by their marked length spectrum.
	
	On $T^n=\R^n/\Z^n$ one can construct non-isometric metrics with the same marked length spectrum and so the natural question is to know whether Euclidean metrics are uniquely determined by their marked length spectrum. When $n=2$, Bangert \cite[Theorem 6.1]{bangert} showed that this is the case. For $n>2$ the question remains open. We present the most general known result.
	
	A useful object to understand this problem is the stable norm $||\cdot||_{s,g}$ on $H_d(T^n,\R)$ associated to a metric $g$ on $T^n$. Given $w\in H_d(T^n,\R)$, $||w||_{s,g}$ is defined to be the infimum of ${\bf M}_g(\Sigma)$ over all cycles that represent $w$ (see \eqref{stable.norm.defi}). If two metrics on $T^n$ have the same marked length spectrum, then they will have the same stable norm on $H_1(T^n,\R)$ \cite[Proposition~2.1]{Bangert1990}.
	
	Burago--Ivanov \cite{burago-ivanov-gafapaper} defined the asymptotic volume $AV(g)$ of a metric $g$ on $T^n$ and showed that it is uniquely minimized by flat metrics.  One consequence is that if $g$ has the same stable norm on $H_1(T^n,\R)$ as a flat metric $g_0$, then $\vol_g(T^n)\geq \vol_{g_0}(T^n)$ and equality implies $g$ is flat. A self-contained proof of the inequality is given in Section \ref{stable.norm.appendix} for the reader's convenience. The argument is well known.

	Our first result removes the requirement that  $\vol_g(T^n)=\vol_{g_0}(T^n)$ at the expense of having $g$ close in the smooth topology to $g_0$.
	We say that a $d$-plane $P\subset \R^n$ is rational if $P\cap \Z^n$ has rank $d$. In this case 
	$$
	T_P:=P/(P\cap \Z^n)
	$$
	is an embedded primitive $d$-dimensional subtorus of $T^n$. {Throughout this paper, for simplicity, we use ${\bf M}$ to denote the mass under the flat metric $g_0$.}
	\begin{thm}\label{higher.codimention.intro}
		Consider  $g_0$ a unit-volume flat metric on $T^n$ and $1\leq d\leq n-2$. There is $\mathcal U$ an  open neighborhood of $g_0$ in the smooth norm, so that if $g\in \mathcal U$ is such that for every rational $d$-plane $P$,
		$$
		{{\bf M}(T_P)=||T_P||_{s,g},}
		$$
		then $g$ is isometric to $g_0$.
	\end{thm}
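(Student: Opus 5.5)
The plan is to trap $||T_P||_{s,g}$ between an upper bound that averages $g$ over translates of $T_P$ and a lower bound that comes from calibrations (closed $d$-forms). Equality in the hypothesis should then force, for every $d$-plane, a pointwise identity on $g$, which for $d\le n-2$ pins down $g$ up to a diffeomorphism.

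\emph{Upper bound.} For a rational $d$-plane $P$, each translate $x+T_P$ represents $T_P$. Hence $||T_P||_{s,g}\le \int_{T^n}\area_g(x+T_P)\,dx$, with $dx$ the unit-volume $g_0$ measure. Write $\phi_P(g(y))=\sqrt{\det(g(y)|_P)}/\sqrt{\det(g_0|_P)}$ for the ratio of $d$-volume densities on $P$. Fubini then gives
\[
{\bf M}(T_P)=||T_P||_{s,g}\le {\bf M}(T_P)\int_{T^n}\phi_P(g(y))\,dy .
\]
So $\int\phi_P(g)\ge 1$ for every rational $P$, and by density and continuity for every $d$-plane $P$.

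\emph{Lower bound.} Here I use the dual description of the stable norm: for any closed $d$-form $\omega$,
\[
||T_P||_{s,g}\ \ge\ \frac{\langle[\omega],T_P\rangle}{\sup_x \mathrm{comass}_{g}\,\omega(x)} .
\]
I would take $\omega=\omega_0+d\eta$, where $\omega_0$ is the constant $g_0$-unit $d$-form dual to $P$. The exact part $d\eta$ is chosen, by a perturbative construction since $g$ is $C^\infty$-close to $g_0$, so that $\omega$ has constant $g$-comass along a foliation by near-flat $g$-minimizing tori in the class of $T_P$. I would obtain this foliation by an implicit function argument: the leaves are graphs over translates of $T_P$, perturbing the flat foliation.

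The equality ${\bf M}(T_P)=||T_P||_{s,g}$ then says that every leaf has $g$-area exactly ${\bf M}(T_P)$. Differentiating this family identity in the leaf parameter, and in $P$, should yield identities stronger than the averaged inequality. Concretely, at first order in $h=g-g_0$ they give $\int \tr_P h=0$, and at second order they force the oscillating part of $h$ to be pure gauge, i.e.\ of the form $\mathcal L_X g_0$.

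\emph{Algebraic step.} Combining the two bounds, the averaged metric $\bar g$ (after modding out by diffeomorphisms) satisfies $\det(\bar g|_P)=\det(g_0|_P)$ for all $d$-planes $P$. For $d<n$, knowing $\det(\bar g|_P)$ for all $d$-planes determines $\bar g$: polarising the map $P\mapsto \det(\bar g|_P)$ recovers $\Lambda^d\bar g$, and $\Lambda^d\bar g$ determines $\bar g$ when $d<n$. Hence $\bar g=g_0$.

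The non-averaged part is handled by a rigidity/Jensen-type argument for the functional $g\mapsto\int\phi_P(g)$ summed over many $P$. I would use a quadratic expansion together with the second-order identities from the lower-bound step. This should show that $g-g_0$ is, modulo gauge, zero to second order, and an iteration or implicit function argument on the slice transverse to diffeomorphisms then gives $g$ isometric to $g_0$.

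\emph{Main obstacle.} I expect the hardest step to be the lower bound, that is, producing the calibrating foliation of minimizers and extracting quantitative identities from it. This is also where $d\le n-2$ must enter. When the codimension is at least $2$ there are enough $d$-planes through each normal direction that the constraints from all $P$ see every component of $h$, including the trace part. When $d=n-1$ the conformal, volume-changing direction escapes, consistent with the codimension-one statement requiring equal volumes. My first attempt would be to test the argument on the linearised problem, $h$ infinitesimal, expanding in Fourier modes on $T^n$. The aim is to check that the constraints indexed by rational $P$ annihilate each nonzero mode up to $\mathcal L_X g_0$ exactly when $d\le n-2$; after that I would upgrade to the nonlinear statement via the implicit function theorem.
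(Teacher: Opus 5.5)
\textbf{There is a genuine gap: the lower bound does not exist as you set it up, and it carries all the content of the theorem.}

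You have located the difficulty correctly. In codimension $n-d\ge 2$, however, there is no foliation by $g$-minimizing tori to perturb.
\begin{itemize}
\item The Jacobi operator of $T_P$ has the $(n-d)$-dimensional kernel of translations. An implicit-function (Lyapunov--Schmidt) construction therefore produces, for each translate, a torus that is minimal only up to an $(n-d)$-dimensional Lagrange multiplier. Genuine minimal tori appear only at critical points of a reduced area function on $T^n/T_P$, not as a foliation.
\item In codimension one the paper removes the scalar multiplier using the Chambolle--Goldman--Novaga foliation by minimizers and the maximum principle. It also uses only finitely many planes, so that the implicit function theorem is uniform. None of this carries over to higher codimension.
\item Hence neither ``every leaf has $g$-area $\mathbf{M}(T_P)$'' nor the existence of a closed form of constant $g$-comass calibrating the leaves is supported. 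By Federer's theorem, $\|T_P\|_{s,g}$ need not even be realized by an integral cycle in $[T_P]$; it is only approached by $\frac{1}{k}\mathbf{M}_g$ of cycles in $k[T_P]$.
\item More fundamentally, the spectral gap of $T_P$ tends to $0$ as $P$ ranges over rational planes. For fixed $g$, any perturbative construction, including your final ``linearize, then upgrade by the implicit function theorem'' step, works only for planes of bounded complexity. The linear injectivity you correctly identify (for $\xi\neq 0$ and $d\le n-2$, traces over the $d$-planes in $\ker\xi$ determine $\widehat h(\xi)|_{\ker\xi}$) needs planes of unbounded complexity. The paper names exactly this non-uniformity as the main obstruction.
\item Without the lower bound, your algebraic step has no input: the averaged constraints $\int\phi_P(g)\ge 1$ are already satisfied by $g=(1+\varepsilon)g_0$.
\end{itemize}

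\textbf{How the paper gets around this.} It avoids $g$-minimizers altogether and uses information you discarded by averaging over $x$. After the Ebin slice (the precise form of ``modding out by diffeomorphisms''), write $g=g_0+h$ with $h$ divergence-free.
\begin{itemize}
\item Kept pointwise, the translate inequality gives $R_P(h)(x)\ge -C\|h\|_{\mathcal W}^2$. Since $\int R_P(h)=\tr_P K$ with $K=\widehat h(0)$, this yields $\|R_P(h)\|_{L^1}\le \tr_P K+C\|h\|_{\mathcal W}^2$ for every rational $P$.
\item So the oscillating part of $h$ is controlled at first order, through Radon injectivity, once one knows $|K|\lesssim\|h\|_{\mathcal W}^{3/2}$. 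No second-order identities are needed for it.
\item That bound on $K$ needs a lower bound for only a \emph{single} rational plane $P$. It is chosen by a measure argument on $G_d(\R^n)$ so that $|\tr_P K|\ge c_0|K|$ and $|\xi^\top|\ge c_0L^{-n}|\xi^\perp|$ for $0<|\xi|\le L$.
\item One then tests a near-minimizing integral cycle $\Sigma$ in $k[T_P]$ against the \emph{flat} calibration $\Omega_P$. Each $e^{2\pi i\xi(x)}\Omega_P$ is an exact form plus $\frac{|\xi^\perp|}{|\xi^\top|}$ times a form vanishing on $P$. The oscillatory contribution is therefore at most $CL^n\|h\|_{\mathcal W}\int_\Sigma|\vec\tau-\vec P|$, and it is absorbed by the excess $\frac12\int_\Sigma|\vec\tau-\vec P|^2$.
\item Taking $L\approx\|h\|_{\mathcal W}^{-1/(4n)}$, and using smoothness for the modes $|\xi|>L$, gives $|K|\lesssim\|h\|_{\mathcal W}^{3/2}$. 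Hence $\sup_Q\|R_Q(h)\|_{L^1}\lesssim\|h\|_{\mathcal W}^{3/2}$.
\item The exponent $3/2>1$ is what lets interpolation absorb the loss in the Radon estimate $\|b\|_{L^2}^2\le C\sup_P\|R_P(b)\|_{L^1}\|b\|_{\mathcal W}$ and force $h=0$.
\end{itemize}
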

	It is {still} an open problem to know whether the condition  $g\in \mathcal U$ can be removed. 
	
	A similar quantity to the stable norm appears in the context of convex billiards under the name of Mather's $\beta$-function (see \cite[Sections~1.4.1--1.4.2]{FKS24} or \cite{FLS25}, for instance). The local rigidity problem for ellipses was solved by Kaloshin--Sorrentino \cite[Corollary~13]{KS18}.

	We address now the codimension-one case. When $d=n-1$, Federer \cite{Federer74} showed that if $w\in H_{n-1}(T^n,\Z)$, then
	$$||w||_{s,g}=\inf\{{\bf M}_{g}(S):S\in \mathcal Z_{n-1}(T^n,\Z),\,[S]=w\},$$
	where $\mathcal Z_{n-1}(T^n,\Z)$ denotes the space of integral cycles. In the codimension-one case the stable norm carries more information regarding the behavior of minimal surfaces. 
	
	{Mather \cite{mather} and Bangert \cite{bangert} showed by different methods that  the unit stable sphere in $H_1(T^2,\R)$ is $C^1$ if and only if every primitive homology class in $H_1(T^2,\Z)$ admits a foliation by shortest closed geodesics. Later, Chambolle--Goldman--Novaga \cite{cgn} extended this result and showed that if the unit stable sphere in $H_{n-1}(T^n,\R)$ is $C^1$, then every primitive homology class in $H_{n-1}(T^n,\Z)$ admits a foliation by plane-like closed area-minimizing hypersurfaces (see also \cite{auer-bangert}). The existence of plane-like minimizers in every direction was proved by Caffarelli--de la Llave in \cite{caffarelli-lalave} (see also \cite{bangert-laminations}) and, for bumpy metrics, the existence of plane-like non-area-minimizers in totally irrational laminations was proven by Nguyen in \cite{Nguyen26}.  The study of periodic variational problems for a general elliptic functional was originally initiated by Moser \cite{moser} who noticed that many principles of Aubry--Mather theory could be applied in this setting.}

	There is no known rigidity result for the stable norm on $H_{n-1}(T^n,\Z)$; {even assuming the extra condition that  the stable norm on $H_{1}(T^n,\Z)$ is also preserved} \footnote{The extra hypothesis that $\vol_g(T^n)=\vol_{g_0}(T^n)$ is not stated in \cite[Theorem 1]{osuna} but is being used.}. 
	
	\begin{thm}\label{codimension.one.intro}
		Consider $g_0$ a unit-volume flat metric on $T^n$. There is $\mathcal U$ an  open neighborhood of $g_0$ in the smooth norm, so that if $g\in \mathcal U$ is such that $\vol_g(T^n)=1$ and, for every rational $(n-1)$-plane $P$,
		$$
		{{\bf M}(T_P)=||T_P||_{s,g}},
		$$
		then $g$ is isometric to $g_0$.
	\end{thm}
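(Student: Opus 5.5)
The plan is to obtain lower bounds on $||T_P||_{s,g}$ from calibrations, and then show that equality for every rational hyperplane, together with $\vol_g(T^n)=1$, forces $g$ to be flat. Write $g=g_0+h$ with $h$ small in $C^\infty$, and work in coordinates where $g_0$ is the standard metric on $\R^n/\Lambda$ (after a linear change). For the codimension-one case the natural competitors are level sets. Given a rational hyperplane $P$ with unit $g_0$-normal $\nu$, the class $[T_P]$ is Poincar\'e dual to a closed $1$-form $\alpha$ cohomologous to $c\,\nu^\flat$ with $c={\bf M}(T_P)$.

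The key inequality is the dual (Federer/Burago--Ivanov) characterization of the stable norm in codimension one: $||w||_{s,g}=\inf\{\int_{T^n}|\alpha|_g\,d\vol_g : \alpha \text{ closed},\ [\alpha]=PD(w)\}$, the comass being the $g$-norm. First, this bounds $||T_P||_{s,g}$ from above by $\int|c\,\nu^\flat|_g d\vol_g$. Second, averaging over directions gives an integral constraint. Set $f(\xi)=\int_{T^n}|\xi|_g\,d\vol_g$ for $\xi\in(\R^n)^*$; this is a convex $1$-homogeneous function with $f\geq ||\cdot||_{s,g}^*$-dual norm. The hypothesis says that the stable norm agrees with the flat norm $|\xi|_{g_0}$ on the dense set of rational directions, hence everywhere by continuity and homogeneity. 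Therefore $|\xi|_{g_0}\le f(\xi)$ for all $\xi$.

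Third, integrate this over the unit $g_0$-sphere $\xi\in S^{n-1}$. We have $\int_{S^{n-1}}|\xi|_{g(x)}d\xi=\omega(g(x))$, a function of the matrix $g(x)$ relative to $g_0$. Writing $A(x)=g_0^{-1}g(x)$, we get $\int_{S}|\xi|_g\,d\xi\cdot\sqrt{\det A}$. Using $\vol_g=1$, meaning $\int\sqrt{\det A}=1$, and expanding to second order in $A-I$, I expect the pointwise function $\Phi(A)=\sqrt{\det A}\,\fint_S\langle A^{-1}\xi,\xi\rangle^{1/2}$ to satisfy $\Phi(A)\le \sqrt{\det A}^{\,1-1/n}\cdot(\ldots)$, i.e. a strict AM--GM type inequality. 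Integrated, this gives $\fint_S f \le 1$ with equality iff $A$ is a constant multiple of the identity. Combined with $\fint_S f\ge 1$ from the previous step, we conclude $g=\lambda(x)g_0$ is conformal. The volume constraint is used here precisely, and this is why it is needed when $d=n-1$.

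Finally, $g=\lambda g_0$ with $\int\lambda^{n/2}=1$ must satisfy $\int\lambda^{(n-1)/2}\ge1$ (equality in the first step for every $\xi$). But Hölder gives $\int\lambda^{(n-1)/2}\le(\int\lambda^{n/2})^{(n-1)/n}=1$, with equality iff $\lambda\equiv1$. There is a remaining issue: the upper bound from the constant form $c\nu^\flat$ need not be sharp, so equality forces more — that $\nu^\flat$ is itself the minimizer for every rational $\nu$. The main obstacle is making the averaging inequality in the third step rigorous and strict; here the smallness $g\in\mathcal U$ is used to Taylor expand $\Phi$ near $A=I$ and to control the remainder by the quadratic coercive term $|A-\tfrac{\tr A}{n}I|^2$. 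If the pointwise inequality fails globally, I would instead linearize. Testing the minimizing form $c\nu^\flat+du$ at first order shows that the first variation of $||T_P||_{s,g}$ is $\int \tfrac12(\tr h-h(\nu,\nu))$. Vanishing for all $\nu$ forces the trace-free constant Fourier mode of $h$ to vanish, and I would use the second-order terms, including the corrector $u$ solving an elliptic equation, to kill the nonconstant modes.
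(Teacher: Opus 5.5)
Your third step carries the whole argument, and it fails. The pointwise inequality goes the other way. Diagonalize $B=A^{-1}$ and use concavity of $\log$ twice: first Jensen on the sphere, then $\log\sum_i\lambda_i\xi_i^2\ge\sum_i\xi_i^2\log\lambda_i$. This gives $\fint_{S^{n-1}}\langle B\xi,\xi\rangle^{1/2}d\xi\ge(\det B)^{1/(2n)}$, i.e.\ $\Phi(A)\ge(\det A)^{(n-1)/(2n)}$, with equality iff $A$ is a multiple of the identity. Hölder and $\int\sqrt{\det A}\,dV=1$ give $\int(\det A)^{(n-1)/(2n)}dV\le 1$. These two bounds point in opposite directions, so $\fint_S f\le 1$ does not follow; it already fails for any constant unimodular $A\ne I$.

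The intermediate conclusion that $g$ is pointwise conformal to $g_0$ is also impossible. You never fix a gauge, and $g=\phi^*g_0$, with $\phi$ a non-affine diffeomorphism isotopic to the identity (e.g.\ a small shear), satisfies every hypothesis of the theorem without being conformal to $g_0$. The paper first reduces to divergence-free $h$ via the Ebin slice theorem.

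More fundamentally, the only consequence of the hypothesis your main line uses is $\int_{T^n}|\xi|_g\,dV_g\ge|\xi|_{g_0}$. This needs only the one-sided inequality $\|T_P\|_{s,g}\ge{\bf M}(T_P)$: it is the $x$-average of ${\bf M}_g(x+T_P)\ge{\bf M}(T_P)$, the inequality behind \eqref{first.inequality}, from which the paper extracts only Lemma~\ref{first.estimates}. One-sided information plus unit volume cannot give rigidity. For $n=3$, the rescaled metrics $c^{2/3}g(s)$ from Theorem~\ref{n=3.example} have unit volume and stable norm $c^{2/3}\|\cdot\|_{s,g_0}\ge\|\cdot\|_{s,g_0}$. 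They are not flat, and they converge smoothly to $g_0$ as $s\to 0$.

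The missing input is the reverse inequality, used through actual minimizers. Since the stable norm of $g$ is Euclidean, hence differentiable, \cite[Theorem 5.1]{cgn} gives, for each chosen rational $P_j$, a foliation by $g$-area-minimizing small graphs $x+v_{j,s}(x)\nu_j$ of area exactly ${\bf M}(T_{P_j})$. The cubic area expansion turns this into the identity $\frac12U_j=V(P_j)+O(|h|^3_{C^3})$, rather than merely $V(P_j)\ge -O(|h|^3)$.

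Rigidity then comes from a quantitative comparison of spherical averages. The volume constraint and Lemma~\ref{first.estimates} force $\sum_j\lambda_jU_j\ge\bigl(\frac{n+1}{2n(n+2)}-4\gamma_n\bigr)\|h\|^2_{L^2}$ up to higher-order errors. The Fourier solution of the linearized equation (Lemma~\ref{fourier.lemma}) together with Lemma~\ref{psi.average.identity} bounds the same sum by $\frac65\bigl(\frac{1}{2n(n+1)(n+2)}+\gamma_n\bigr)\|h\|^2_{L^2}$ up to higher-order errors. Together these force $h=0$.

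Your fallback paragraph mentions correctors, but the linearized problem has a kernel (Section~\ref{remark.section}). Everything therefore hinges on this precise second-order computation, which the proposal does not supply.
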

	{
		A direct corollary of Theorem \ref{higher.codimention.intro} and Theorem \ref{codimension.one.intro} is the local marked area spectrum rigidity of flat metrics on $T^n$. The \emph{marked $k$-area spectrum} for a metric $g$ is a function from $H_k(T^n,\Z)\to \R_{\geq0}$ sending an integer homology $[w]$ class to the $k$-area of the minimizer in $[w]$ under the metric $g$. Federer (Theorem \ref{thm.Federer}) observed that the marked $k$-area spectrum determines the stable norm on $H_k(T^n,\R)$. As a consequence, we have the following corollary.
		\begin{cor}
			Consider $g_0$ a unit-volume flat metric on $T^n$. There is $\mathcal U$ an open neighborhood of $g_0$ in the smooth norm, so that if $g\in \mathcal U$ and
			\begin{itemize}
				\item either $g$ has the same $k$-marked area spectrum as $g_0$ for some $1\leq k\leq n-2$;
				\item or $g$ has the same $n-1$-marked area spectrum as $g_0$ and $\vol_g(T^n)=\vol_{g_0}(T^n)$.
			\end{itemize}
			Then $g$ is isometric to $g_0$.    
		\end{cor}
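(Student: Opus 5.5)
The corollary should follow by reducing the hypothesis on the marked $k$-area spectrum to the hypothesis of Theorem \ref{higher.codimention.intro} (for $1\leq k\leq n-2$) or of Theorem \ref{codimension.one.intro} (for $k=n-1$). Take $\mathcal U$ to be the intersection of the neighborhoods given by the two theorems. Let $g\in\mathcal U$ have the same marked $k$-area spectrum as $g_0$. It then suffices to check that ${\bf M}(T_P)=||T_P||_{s,g}$ for every rational $k$-plane $P$.

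The first step is to transfer the equality of spectra to equality of stable norms. Federer's observation (Theorem \ref{thm.Federer}) says that for an integral class $w\in H_k(T^n,\Z)$,
$$||w||_{s,g}=\lim_{m\to\infty}\frac{1}{m}\,\inf\{{\bf M}_g(S): S \text{ integral cycle}, [S]=mw\}.$$
The right-hand side depends only on the marked $k$-area spectrum of $g$. Since $g$ and $g_0$ have the same marked $k$-area spectrum, we get $||w||_{s,g}=||w||_{s,g_0}$ for all $w\in H_k(T^n,\Z)$. By homogeneity and continuity of norms, this equality extends to $H_k(T^n,\Q)$ and then to all of $H_k(T^n,\R)$, although only the integral classes are needed below.

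The second step is to identify the stable norm of $g_0$ on the classes $[T_P]$. For the flat metric $g_0$, each primitive subtorus $T_P$ is totally geodesic. Its class is represented by the constant-coefficient $k$-form dual to $P$, with comass equal to its norm, and $T_P$ is calibrated by that parallel calibration. Hence $T_P$ minimizes mass in its real homology class, so $||[T_P]||_{s,g_0}={\bf M}(T_P)$. This is the standard fact that flat subtori calibrated by constant forms are stable-norm minimizers; I would expect it to be recorded in the paper's discussion of the stable norm. Combining the two steps gives ${\bf M}(T_P)=||T_P||_{s,g}$ for every rational $k$-plane $P$.

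To conclude, apply Theorem \ref{higher.codimention.intro} when $1\leq k\leq n-2$. When $k=n-1$, apply Theorem \ref{codimension.one.intro}, which also uses the volume hypothesis. Both theorems assume unit volume for $g_0$, so we first rescale: if $\vol_{g_0}=c^n$, replace $g$ and $g_0$ by $c^{-2}g$ and $c^{-2}g_0$. This multiplies all $k$-areas by $c^{-k}$, so equality of spectra is preserved, and it preserves the condition $\vol_g=\vol_{g_0}$. Shrinking $\mathcal U$ compatibly with the scaling handles the general case. The only real content beyond the two theorems is the calibration identity for $g_0$, which is routine.
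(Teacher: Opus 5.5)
Your proposal is correct and follows the paper's argument. Federer's formula (Theorem \ref{thm.Federer}) shows that the marked $k$-area spectrum determines the stable norm on integral classes, the calibration of $T_P$ gives ${\bf M}(T_P)=||T_P||_{s,g_0}$, and then Theorem \ref{higher.codimention.intro} or Theorem \ref{codimension.one.intro} applies, with $\mathcal U$ the intersection of the finitely many neighborhoods for $k=1,\ldots,n-1$. Your rescaling step is harmless but unnecessary, since the corollary already assumes $g_0$ has unit volume.
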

	}
	
	The next theorem shows that the volume condition in Theorem \ref{codimension.one.intro} cannot be removed.
	{\begin{thm}\label{bangert.question} Consider a unit-volume flat metric $g_0$ on $T^3$. There is a smooth family of metrics $\{{g}(s)\}_{0\leq s<+\infty}$ so that
			\begin{itemize}
				\item ${g}(0)=g_0$ and  ${g}(s)$ is  not flat for $s>0$;
				\item the stable norm on $H_2(T^3,\R)$ remains fixed;
				\item  $\vol_{{g}(s)}(T^3)<1$ if $s\neq 0$ and $\lim_{s\to +\infty}\vol_{{g}(s)}(T^3)=0.$
			\end{itemize} 
	\end{thm}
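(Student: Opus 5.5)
The plan is to build $g(s)$ explicitly, using a calibration argument with \emph{non-constant} closed forms. Constant forms cannot suffice, for the following reason. For a metric $g$ on $T^3$ with $\rho=dvol_g/dvol_{g_0}$, the $g$-area density of a surface with $g_0$-unit conormal $\nu$ is $F_g(x,\nu)=\sqrt{\nu^TC(x)\nu}$, where $C=\det(g)\,g^{-1}$ is the cofactor matrix. If $F_g(x,\cdot)\ge|\cdot|_{g_0}$ held pointwise, then $C\ge I$, and hence $\rho^4=\det C\ge 1$, forcing $\vol_g\ge 1$. So any example with $\vol_{g(s)}<1$ must have area densities that are smaller than flat in some directions at some points, and larger elsewhere, in such a way that no homology class gains anything by exploiting the cheap regions.

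The stable norm on $H_2(T^3,\R)$ is dual to the comass norm on $H^2(T^3,\R)$. Keeping it fixed therefore amounts to two inequalities, for every rational $2$-plane $P$ with $g_0$-unit normal $\nu_P$.

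First, the upper bound: $\mathrm{area}_{g(s)}(T_P)\le {\bf M}(T_P)$. Together with the reverse inequality, this shows the flat subtori are still minimizing with the same area.

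Second, the lower bound: there is a closed $2$-form $\omega_P=\iota_{\nu_P}dvol_{g_0}+d\eta_P$ whose $g(s)$-comass is $\le 1$, so that $\omega_P$ calibrates and gives $\|T_P\|_{s,g(s)}\ge{\bf M}(T_P)$.

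By convexity and homogeneity, it suffices to do this for a dense set of directions and then pass to all of $H_2(T^3,\R)$.

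For the construction, I would reduce the dimension with a symmetric ansatz:
$$g=a(x_1)^2dx_1^2+b(x_1)^2(dx_2^2+dx_3^2).$$
By symmetry in $(x_2,x_3)$, the relevant minimization reduces to a one-variable variational problem in $x_1$, in the spirit of Aubry--Mather theory and Bangert's analysis of $T^2$. Explicitly, with $b$ playing the role of a conformal factor, one has $F_g=\sqrt{b^4\nu_1^2+a^2b^2(\nu_2^2+\nu_3^2)}$ and $\rho=ab^2$.

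The three directional constraints then read as follows.
\begin{itemize}
\item The tori $\{x_1=c\}$ have area $b(c)^2$. They must be minimizing of area $1$, so $\min b=1$, attained on a set that I would take to be the whole circle except for a small region.
\item The tori containing the $x_1$-direction have area $\int ab\,dx_1$, which should equal $1$.
\item For mixed directions, the calibrating forms $\omega_P$ are obtained by solving the ODE for $\eta_P=\eta_P(x_1)$.
\end{itemize}
The volume is $\int ab^2\,dx_1$. To make it small, I would let $a$ be tiny and $b$ large on most of the circle, in such a way that $\int ab=1$ is maintained while $\int ab^2<1$. This is the hard part: it appears to conflict with $b\ge1$, so the ansatz will probably have to be enlarged to allow a shear term, for instance $g$ with an off-diagonal entry depending on $x_1$, or a dependence on two variables. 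I would first try a warped-product family in which both the warping function and the shear depend on $s$. I would verify the comass inequality by hand for the $1$-parameter family of calibrating forms, choosing the profile so that the pointwise comass equals $1$ exactly along the lines where $F_g<|\nu|$.

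The main obstacle is this simultaneous calibration: I must find, for every direction at once, closed forms whose $g(s)$-comass is at most $1$, while the pointwise densities drop below flat somewhere. Once the profile is found, the rest is routine. Smoothness in $s$ is built in; non-flatness for $s>0$ follows from the non-constant profile; and $\vol_{g(s)}\to 0$ follows by choosing the profile family so that $\int ab^2\,dx_1\to 0$ as $s\to\infty$, while every calibration inequality is kept as an equality on its support.
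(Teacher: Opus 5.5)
\textbf{There is a genuine gap: the proposal never produces the family, and the theorem is precisely the existence of such a profile.} The one explicit ansatz you write down cannot work, and you notice this yourself. For $g=a(x_1)^2dx_1^2+b(x_1)^2(dx_2^2+dx_3^2)$, the class of $\{x_1=c\}$ forces $b\ge 1$, hence $\vol_g=\int ab^2\ge\int ab\ge 1$. The borderline case $b\equiv 1$ gives $a(x_1)^2dx_1^2+dx_2^2+dx_3^2$, which is just a reparametrization of a flat metric. This also shows that a non-constant profile does not by itself give non-flatness. The repairs you suggest (a shear term, dependence on two variables) are never specified. So your closing claims about $\vol_{g(s)}\to 0$, and about every calibration inequality being an equality, have nothing to apply to. In addition, your first requirement, $\mathrm{area}_{g(s)}(T_P)\le{\bf M}(T_P)$ (flat tori stay minimizing), is not necessary. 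It fails for the construction that does work, so it steers the search in the wrong direction.

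\textbf{The missing idea: put the product structure on $G=\det(g)g^{-1}$, not on $g$, and make the horizontal block anisotropic and rotating.} Take $g_0$ standard for simplicity. The paper uses $G=h(x_3)+dx_3^2$ with
$h(t)=c^{-2}R_{2\pi t}\,\mathrm{diag}(b^2,b^{-2})\,R_{2\pi t}^{T}$ and $c=\int_0^1\sqrt{b^2\cos^2(2\pi t)+b^{-2}\sin^2(2\pi t)}\,dt\ge \frac{b+b^{-1}}{2}$.
For any $G$ of this product form, $\|PD(v)\|_{s,g}=\sqrt{A^2+v_3^2}$ with $A=\int_0^1|v_0|_{h(t)}\,dt$.
\begin{itemize}
\item Lower bound: slice a competitor $\Sigma$ by $x_3$. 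Use $\int_{\Sigma_t}\nu_t\,dA=v_0$ and the triangle inequality for $|\cdot|_{h(t)}$ on each slice, then the co-area formula, then Minkowski's inequality $\int\sqrt{X^2+Y^2}\ge\sqrt{(\int X)^2+(\int Y)^2}$.
\item Upper bound: foliate by level sets of $f$ with $df=v_0+\frac{v_3}{A}|v_0|_{h(x_3)}\,dx_3$. This form is closed because its $dx_3$-coefficient depends only on $x_3$, and it is cohomologous to $v$.
\end{itemize}
The full rotation makes $A$ rotation invariant, and the factor $c^{-2}$ normalizes it to $A=|v_0|$, so the stable norm is exactly Euclidean. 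Meanwhile $\det G=\det(g)^2=c^{-4}$ gives $\vol_g=c^{-1}<1$ for $b>1$, and $\vol_g\to 0$ as $b\to\infty$. The mechanism is that the stable norm only sees the average of the slice norms, while the volume sees the average of $(\det h)^{1/4}$. A rotating eccentric ellipse has large mean width but fixed area.

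In this example, when $v_0\neq 0\neq v_3$, every translate of the flat torus has $g$-area $\int_0^1\sqrt{|v_0|^2_{h(t)}+v_3^2}\,dt>|v|$, by strict Minkowski, because $|v_0|_{h(t)}$ is non-constant. So the flat tori are not minimizing; the minimizers are the bent level sets of $f$. Non-flatness follows from the Gauss equation: the leaves $\{x_3=t\}$ are flat, minimal, and not totally geodesic. Alternatively, a flat metric with the same stable norm on $H_2(T^3,\R)$ as $g_0$ must have volume $1$.
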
}
	The non-flat metrics $g(s)$ have the property that the volume can be very small but every primitive homology class admits a foliation by area-minimizing tori having the same area as in the Euclidean case.  Bangert in his ICM address \cite[p.~462]{bangert.icm.address} proposed the following problem.
	\medskip
	
	{\noindent{\bf Problem. }{\em Suppose $(T^n,g)$ is a Riemannian torus such that for every prime class $v\in H_{n-1}(T^n,\Z)$ there exists a foliation of $T^n$ by $g$-minimal $(n-1)$-tori in the class $v$. Is $g$ flat?}}
	\medskip
	
	The method (Theorem \ref{stable.warp}) used to show Theorem \ref{bangert.question} also answers the above problem negatively.
	{\begin{thm}\label{lots.deformations} There are non-flat metrics on $T^n$, $n\geq 3$, such that every primitive class in $H_{n-1}(T^n,\Z)$ admits a foliation by minimal tori and the stable unit sphere in $H_{n-1}(T^n,\R)$ is not $C^2$.
	\end{thm}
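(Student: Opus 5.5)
The plan is to build explicit non-flat metrics that depend on a single coordinate and whose slice tori all have the same area, in the spirit of the warping construction of Theorem \ref{stable.warp}. Write $x=(x_1,x')\in T^n$ with $x'=(x_2,\dots,x_n)$, and take
$$
g \;=\; dx_1^2 + Q_{x_1}(dx',dx'),\qquad Q_{x_1}=\mathrm{diag}\bigl(e^{2u(x_1)},e^{-2u(x_1)},1,\dots,1\bigr),
$$
where $u$ is a smooth, nonconstant, $1$-periodic function. This is where $n\ge 3$ is used: the block $x'$ has dimension $n-1\ge 2$. Since $\det Q_{x_1}\equiv 1$, every slice $\{x_1=c\}$ has area $1$. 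A direct first-variation computation then shows that the slices are minimal, and they foliate $T^n$ in the class dual to $dx_1$. The metric is not flat as soon as $u'\not\equiv 0$, because the second fundamental form of the slices is $\tfrac12\partial_{x_1}Q$, which is nonzero. Some care is needed here, because plain warped products $dx_1^2+b(x_1)^2|dx'|^2$ fail. One checks that affine hyperplanes are minimal for such a metric only if it is constant. So the leaves in general directions must be curved, and the determinant normalization is what saves the slice direction.

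For a primitive class $v\in H_{n-1}(T^n,\Z)$ not of slice type, I would exploit the symmetry. The metric is invariant under all translations in $x'$. Write $v$ as dual to $\nu=(\nu_1,\nu')$ with $\nu'\neq 0$. Look for leaves of the form $\{\nu'\cdot x' = h(x_1)+c\}$, where $h(x_1+1)=h(x_1)-\nu_1$ so that they lie in the class $v$. The area of such a hypersurface reduces to a one-dimensional functional $\int_0^1 L(x_1,h'(x_1))\,dx_1$. The integrand $L$ is convex and superlinear in $h'$ and does not depend on $h$. The Euler--Lagrange equation therefore has the first integral $\partial_{h'}L(x_1,h')=\eta$ with $\eta$ constant. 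For every prescribed mean slope $-\nu_1$ there is a unique $\eta$ producing a periodic solution, by strict monotonicity in $\eta$ and Legendre duality. This gives one minimal, indeed area-minimizing, torus in the class $v$. Its $x'$-translates, parametrized by $c$, are again minimal and foliate $T^n$, because the graph structure makes them disjoint. One subtle point must be checked: minimality within the symmetric class should imply minimality in general. This should follow from Palais's symmetric criticality, since the torus group of $x'$-translations acts by isometries.

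The same reduction gives the stable norm in closed form. It equals $\sup_\eta$ of an averaged Legendre dual, namely the ``layered medium'' homogenization formula
$$
\|\nu\|_{s,g}^{*}\ \text{is determined by}\ \int_0^1 L^*(x_1,\eta)\,dx_1,
$$
taken jointly in $(\eta,\nu')$. The remaining and hardest step is to show that the stable unit sphere is not $C^2$. For that I would choose $u$ so that this averaged dual fails to be $C^2$ in some direction. The first thing to try is $u$ taking two values on sets of positive measure, smoothed so that $g$ stays smooth. The aim is for the homogenized norm near the slice direction $\nu=e_1$ to behave differently to second order from different sides. Concretely, I would compute the Hessian of $\|\cdot\|_{s,g}$ at $e_1$ along the $x_2$ and $x_3$ directions. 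Expanding the averaged formula to second order should produce coefficients like $\int e^{\pm 2u}$ and their harmonic means, and these disagree in a way that forces a loss of regularity.

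This is the step I expect to be the main obstacle. Averaging strictly convex smooth integrands usually produces a smooth norm. So the failure of $C^2$ must come from a degeneracy, presumably at the slice direction where $\nu'=0$, where the reduction changes character. I would analyze the one-sided second derivatives there carefully. If this does not work, I would instead perturb $u$ along a sequence of scales so that the second difference quotients oscillate.
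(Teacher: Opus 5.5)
\paragraph{Main gap.} The gap is the theorem's last assertion. You never show that the stable unit sphere fails to be $C^2$, and the test you propose cannot detect the failure.

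Carry your reduction to the end first. Let $X$ be the unit normal of your minimal foliation. Then $\iota_X dV_g$ is a closed calibrating form, so the leaves minimize among real cycles and
\[
\|PD(\nu)\|_{s,g}=\sqrt{B(\nu')^2+\nu_1^2},\qquad B(\nu')=\int_0^1|\nu'|_{Q_t^{-1}}\,dt .
\]
This holds first for integral $\nu$ with $\nu'\neq0$, then for all $\nu$ by homogeneity and continuity. On the hyperplane $\{\nu_1=1\}$ near $dx_1$ it reads $\sqrt{1+B(\nu')^2}$. Along each line $\nu'=tw$ it equals $\sqrt{1+t^2B(w)^2}$, which is smooth and even in $t$. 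So second derivatives along the $x_2$- and $x_3$-axes see nothing, and neither do one-sided second differences. (Those axis second derivatives are $(\int_0^1e^{\mp u}dt)^2$, not expressions in $\int e^{\pm 2u}$ or harmonic means.)

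The missing idea is part (ii) of Theorem~\ref{stable.warp}. If $f(tw)=t^2f(w)$ and $f$ is twice differentiable at $0$, then $f(w)=\frac12D^2f(0)(w,w)$. Hence $\sqrt{1+B^2}$ is $C^2$ at $\nu'=0$ only if $B$ is a Euclidean norm. This must be tested with off-axis directions, via the parallelogram law. On the $(\nu_2,\nu_3)$-plane, $B(x,y)=\int_0^1\sqrt{e^{-2u}x^2+e^{2u}y^2}\,dt$, and the strict Minkowski inequality gives
\[
B(1,\pm1)^2=\Bigl(\int_0^1\sqrt{e^{-2u}+e^{2u}}\,dt\Bigr)^2>\Bigl(\int_0^1e^{-u}dt\Bigr)^2+\Bigl(\int_0^1e^{u}dt\Bigr)^2=B(1,0)^2+B(0,1)^2
\]
for every nonconstant $u$. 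So the parallelogram law fails. A norm whose unit sphere is a $C^2$ hypersurface is $C^2$ away from the origin, so the stable unit sphere is not $C^2$. No two-valued or multiscale $u$ is needed. This is exactly the paper's argument in (iii), with $N(x,y)=\int\sqrt{x^2\phi^2+y^2}\,dt$.

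\paragraph{The rest of the plan.} The construction is sound and is in fact a special case of the paper's. Because $\det g=1$, you have $G=\det(g)g^{-1}=dx_1^2+Q_{x_1}^{-1}$. So your metric is an instance of the ansatz of Theorem~\ref{stable.warp}, with $x_1$ in the role of $x_n$. Your first integral gives $h'=-\nu_1 m/\int_0^1 m$ with $m=|\nu'|_{Q^{-1}_{x_1}}$. Your leaves are therefore exactly the level sets of the paper's closed form $\lambda$; the paper finds them by matching a slicing lower bound instead of an Euler--Lagrange reduction.

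Several steps still need fixing:
\begin{itemize}
\item $L(x_1,p)=\sqrt{p^2+m(x_1)^2}$ has linear, not superlinear, growth. Existence and uniqueness of $\eta$ still hold, because $p\mapsto p/\sqrt{p^2+m^2}$ is a bijection onto $(-1,1)$ and $m>0$.
\item Symmetric criticality must use the stabilizer $\{w:\nu'\cdot w\in\Z\}$ of a leaf, not all $x'$-translations, which move the leaf. It is simpler to compute directly: since $\det g=1$ and $g$ is block diagonal, the mean curvature of $\{F=c\}$ is $\pm\partial_{x_1}\bigl(h'/\sqrt{h'^2+m^2}\bigr)=0$.
\item ``Area-minimizing'' requires the calibration above.
\item Non-flatness requires the Gauss equation, not merely $A\neq0$. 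For a minimal hypersurface with flat induced metric in flat space, all the quantities $A_{ik}A_{jl}-A_{il}A_{jk}$ vanish. That forces $A$ to have rank at most one, and then $\tr A=0$ forces $A=0$.
\end{itemize}
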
}
	The theorem does not hold when $n=2$. Indeed, it follows from  \cite{bangert} that if the stable unit sphere in $H_{1}(T^2,\R)$  is $C^1$, then $g$ is flat and so the stable unit sphere is $C^{\infty}$.  To the best of our knowledge, Theorem \ref{lots.deformations} gives the first example of a smooth Riemannian metric on a closed manifold whose stable unit sphere is $C^1$ but not smooth.	
	
	Theorem \ref{lots.deformations} is reminiscent of Ambrozio--Marques--Neves \cite[Theorem C]{AMN25} in which the authors constructed non-round metrics on $S^n$ so that through every hyperplane there is a minimal hypersphere tangent to that hyperplane.
	
	The final result we prove concerns rigidity of the volume spectrum. To any closed Riemannian manifold $(M,g)$ we associate the volume spectrum $\{\omega_k(M,g)\}_{k\in\N}$ (see Section \ref{vs.defi} and \cite{lmn}) which can be thought of as a non-linear version of the Laplacian spectrum.
	
	We say that the volume spectra of two metrics $g$ and $g'$ on a closed manifold $M$ \emph{strongly coincide} if for every $k\in\mathbb N$ and every finite covering map $p:M'\to M$
	$$
	\omega_k(M',p^*g')=\omega_k(M',p^*g).
	$$
	\begin{thm}\label{spectrum.rigidity.intro} Consider  $g_0$ a unit-volume flat metric on $T^n$, $n\geq 2$. There is an open neighborhood $\mathcal U$ of $g_0$ in the smooth norm, so that if the volume spectrum of $g\in \mathcal U$  strongly coincides with the volume spectrum of $g_0$, then $g$ is isometric to $g_0$.
	\end{thm}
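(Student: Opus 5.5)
The plan is to reduce Theorem \ref{spectrum.rigidity.intro} to the stable-norm rigidity results, Theorem \ref{higher.codimention.intro} (for $n\ge 3$, $d=1$) and Theorem \ref{codimension.one.intro} (for $d=n-1$, which also covers $n=2$). Two pieces of information have to be extracted from the volume spectrum. First, the volume. The Weyl law for the volume spectrum (Liokumovich--Marques--Neves) gives $\lim_{k\to\infty}\omega_k(M,g)k^{-1/n}=a(n)\vol_g(M)^{(n-1)/n}$. Applying this with the trivial cover shows $\vol_g(T^n)=\vol_{g_0}(T^n)=1$.

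Second, the stable norm on $H_{n-1}$. Fix a primitive class $w\in H_{n-1}(T^n,\Z)$ and an integer $m$. Consider the cyclic $m$-fold covering $p_m:T^n\to T^n$ that unwinds the direction dual to $w$, so that the lift of a hypersurface in class $w$ stays in class $w$ upstairs while the cover is long in the transverse direction. I want to show
$$\lim_{m\to\infty}\frac{\omega_1(T^n,p_m^*g)}{?}\quad\text{or more simply}\quad \omega_1(T^n,p_m^*g)=2\|w\|_{s,g}\ \text{for } m \text{ large}$$
(up to a fixed normalization). The upper bound comes from a sweepout. On the long cover I sweep out by a one-parameter family of cycles homologous to $0$ mod 2, made of pairs of $g$-minimizers in class $w$ (Federer's theorem identifies their mass with $\|w\|_{s,g}$), glued along the transverse direction. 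This costs at most $2\|w\|_{s,g}+o(1)$.

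For the lower bound, note that any $1$-sweepout must contain a cycle that separates the cover into two regions of equal volume. On a very long cover such a cycle, relative to the fundamental domain, must have boundary pieces in nontrivial homology. Its mass is therefore at least $2\|w\|_{s,g}$, by the isoperimetric/calibration argument and the fact that the stable norm controls the mass of cycles in class $w$ on the cyclic cover. Taking $m\to\infty$ yields an identity of the form $\lim_m \omega_1(p_m^*g)=2\|w\|_{s,g}$. The same identity holds for $g_0$, so $\|w\|_{s,g}=\|w\|_{s,g_0}={\bf M}(T_P)$ for every rational $(n-1)$-plane. Theorem \ref{codimension.one.intro}, combined with the volume equality above, then gives isometry. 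Choosing $\mathcal U$ to be the neighborhood of that theorem finishes the proof.

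The main obstacle is the lower bound. I need to show that every optimal sweepout on the long cyclic cover has width asymptotically $2\|w\|_{s,g}$, rather than something smaller obtained by cutting with small spheres or with cylinders in another direction. To do this I will take the cover long in the $w$-dual direction relative to all other periods, so that any competitor separating half the volume has mass much larger than $2\|w\|_{s,g}$ unless it is homologically nontrivial in the $w$ direction. Here I will use an isoperimetric inequality on $(T^n,g)$ with constants uniform in $\mathcal U$. The stable-norm estimate then follows from the definition \eqref{stable.norm.defi}, via the relative isoperimetric comparison. If exact equality is hard to obtain, it suffices to have matching two-sided bounds with the same $o(1)$ errors for $g$ and $g_0$, since only the limit is compared.
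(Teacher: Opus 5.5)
Your outline follows the paper's route: the Weyl law for the volume, then $\omega_1$ of long covers to read off $\|T_P\|_{s,g}$, then Theorem~\ref{codimension.one.intro}. Your half-volume isoperimetric lower bound is also essentially the paper's slicing argument. The gap is the upper bound $2\|w\|_{s,g}+o(1)$ on a cover that is long only in the transverse direction. In the paper's notation this cover is $\R^n/(\Lambda\oplus l\Z q)$, with $\Lambda=P\cap\Z^n$ and $w=[T_P]$.

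Any sweepout built from minimizers must carry a sheet continuously from a minimizer $\Sigma$ to its translate $\Sigma+q$, and the intermediate cycles are not minimizers. The excess mass of this transition is a mountain-pass quantity on a fundamental domain of fixed size. It does not depend on $l$, so no normalization makes it $o(1)$. It vanishes when the $g$-minimizers in class $w$ foliate $T^n$, as for $g_0$. That is not available a priori, and it fails for instance for bumpy metrics, whose minimal hypersurfaces are nondegenerate.

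The bounds you can actually justify give
$2\|w\|_{s,g}\le\liminf_l\omega_1(T_{1,l},g)=\liminf_l\omega_1(T_{1,l},g_0)\le 2\|w\|_{s,g_0}$,
that is, only $\|w\|_{s,g}\le\|w\|_{s,g_0}$. This cannot feed Theorem~\ref{codimension.one.intro}. Take $g=e^{2u}g_0$ with $\int_{T^n}e^{nu}\,dV=1$ and $u$ nonconstant. Averaging the competitors $T_P+x$ and applying H\"older gives $\|T_P\|_{s,g}\le{\bf M}(T_P)\int_{T^n}e^{(n-1)u}\,dV<{\bf M}(T_P)$ for every $P$.

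The missing idea is to dilate the cover tangentially as well, so that the transition cost becomes lower order. The paper uses $T_{m,l}=\R^n/(m\Lambda\oplus l\Z q)$ and a plane-like minimizer $\partial E$ from Caffarelli--de la Llave. This $E$ is $\Lambda$-periodic, has $\partial E$ in a slab of bounded width, and satisfies $E+q\subset E$.

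This ordering makes a ``zipper'' possible. One replaces $\Sigma_m=\partial E/\Lambda_m$ by $\Sigma_m+q$ over $\pi^{-1}(\Theta_m(t))$, where $\Theta_m(t)$ sweeps out the $(n-1)$-torus $T_m$ with ${\bf M}(\partial\Theta_m(t))\le Cm^{n-2}$. The only extra mass is the wall $\partial\pi^{-1}(\Theta_m(t))\llcorner G_m$, which is $O(m^{n-2})$. This gives $\omega_1(T_{m,l},g)\le 2m^{n-1}\|T_P\|_{s,g}+Cm^{n-2}$, uniformly in $l$.

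Your lower-bound argument at fixed $m$ gives $\liminf_l\omega_1(T_{m,l},g)\ge 2m^{n-1}\|T_P\|_{s,g}$. Divide by $m^{n-1}$ and let $l\to\infty$, then $m\to\infty$. This yields the exact identity $2\|T_P\|_{s,g}$ for both $g$ and $g_0$, hence $\|T_P\|_{s,g}={\bf M}(T_P)$. You do not need isoperimetric constants uniform in $\mathcal U$ for this, since all limits are taken at fixed $g$.
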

	The only other known case where rigidity of the volume spectrum is known is  $\RP^2$ with the round metric \cite[Theorem A]{AMN24}. 
	
	{The Laplacian spectrum of flat metrics on $T^n$ is rigid only if  $n\leq 3$ (from \cite[Theorem~A]{tanno-spectrum} and \cite{Schiemann90,Schiemann97}). Reasoning by analogy with this case, we would expect that, for low dimensions, the volume spectrum of flat metrics on $T^n$ is rigid but that rigidity could fail for high dimensions. We also expect the volume spectrum of flat metrics to be strongly rigid in every dimension.  Using Proposition \ref{long.cyclic.width}, this would follow if Theorem \ref{codimension.one.intro} held for any metric $g$.	
		
		\subsection{Remarks about the proofs}\label{remark.section}}
	
	An important ingredient in our proof is the \emph{normalized tensorial Radon transform} on $T^n$, which is the linearization of the area functional with respect to the metric. Fix a flat metric on $T^n$.  Given a symmetric $2$-tensor $b$ on $T^n$, for every  rational $d$-plane $P$ in $T^n$  we consider
	\begin{equation}\label{radon.definition}
		R_P(b):T^n\rightarrow \R, \quad R_P(b)(x)=\fint_{T_P}\tr _P b(x+y)\,dA(y),
	\end{equation}
	where $\tr_P b$ is the trace of the induced tensor $b|_P$. Related Radon transforms on tori were studied in \cite{Abouelaz11,AbouelazRouviere11,ilmavirta}, and the corresponding transform of symmetric $2$-tensors over Euclidean planes was studied in \cite{GrebnevStefanovUhlmannZhou26}.
	
	If $1\leq d\leq n-2$ and $b$ is a divergence-free symmetric $2$-tensor such that $R_P(b)=0$ for every rational $d$-plane $P$, then it is not difficult to see that $b=0$ (see Proposition \ref{injectivity.estimate}). Thus the linearization of Theorem \ref{higher.codimention.intro} has no kernel. The standard approach of arguing by contradiction and then using elliptic estimates to show that $|g-g_0|=O(|g-g_0|^2)$ does not work here. The reason is that the norm of the inverse of the Jacobi operator on $T_P$ can become arbitrarily large as $P$ varies, and so the implicit constant in $O(|g-g_0|^2)$ cannot be chosen uniformly. This is the main difficulty in the proof of Theorem \ref{higher.codimention.intro}. We outline our approach in Section \ref{basic.idea.higher.codimension}.
	
	The difficulty with Theorem \ref{codimension.one.intro} is different. There are divergence-free symmetric $2$-tensors $h\neq 0$ such that
	$$\int_{T^n} \tr h\,dV=0\quad\text{and}\quad R_P(h)=0\text{ for every rational hyperplane }P.$$
	Thus the linearized problem has a kernel. We use second-order integral identities in the Taylor expansions of volume and area to show that $g$ is isometric to $g_0$. We outline our approach in Section \ref{basic.idea.codimension.one}.
	
	Theorem \ref{bangert.question} and Theorem \ref{lots.deformations} are proven in Section \ref{non-rigid}. The idea behind the proof of Theorem \ref{lots.deformations} is simple. Given a metric $g$ on $T^n$, it is convenient to consider the symmetric $2$-tensor $G=\det(g)g^{-1}$. The reason is that if $P$ is a hyperplane orthogonal to a unit vector $v$, then $\det(g|_{P})=G(v,v)$, and so
	$$\text{area}_g(\Sigma)=\int_\Sigma\sqrt{G(\nu,\nu)}\,dA,$$
	where $\nu$ is the Euclidean unit normal to $\Sigma$.
	If $G=g(x_n)+dx_n^2$, where $g(x_n)$ is a flat metric on $T^{n-1}$, then a calibration argument shows that every primitive class in $H_{n-1}(T^n,\Z)$ is foliated by minimal tori (Theorem \ref{stable.warp}). If $n\geq 3$, we show that this ansatz produces non-flat metrics.
	
	The family of metrics in Theorem \ref{bangert.question} was found using one of the commercially available LLMs. It is an open problem whether $T^n$, for $n\geq 4$, admits a metric $g$ very close to a flat metric and having the same stable norm on $H_{n-1}(T^n,\R)$ as that flat metric. We expect such metrics to exist but the ansatz $G=g(x_n)+dx_n^2$ only works when $n=3$.
	
	Theorem \ref{spectrum.rigidity.intro} is proven in Section \ref{almgren-pitts-rigid}. The main computation has independent interest and we show that the volume spectrum of all finite covers of $T^n$ determines its stable norm on $H_{n-1}(T^n,\R)$. From the Weyl Law for the volume spectrum \cite{lmn} we know that the volume is also determined. Theorem \ref{spectrum.rigidity.intro}  is then a consequence of Theorem \ref{codimension.one.intro}.
	
	\subsection*{Acknowledgments} {A.S. thanks Jinxin Xue for helpful conversations on Hamiltonian dynamics and Ruixiang Zhang for helpful conversations on harmonic analysis.}
	
	\section{Notation}
	
	Recall that we say a $d$-plane $P\subset \R^n$ is rational if $P\cap \Z^n$ has rank $d$ {and that we consider the embedded subtorus of $T^n$  given by}
	$$
	T_P:=P/(P\cap \Z^n).
	$$
	
	{Fix a Euclidean metric $g_0$ on $T^n$. In Fourier expressions, $\xi=(\xi_1,\ldots,\xi_n)\in\Z^n$ denotes the integral covector $\sum_i\xi_i\,dx_i$, and we write $\xi(x)$ for its pairing with $x$. For ease of notation, whenever $\xi$ occurs in a metric expression, we identify it with the vector $\xi^{\sharp_{g_0}}$; it will be clear from the context whether $\xi$ denotes the covector or the corresponding vector; all geometric quantities are computed with respect to $g_0$, unless it is written otherwise.  For instance, $x\cdot y$ stands for $g_0(x,y)$ and $|v|$ for $|v|_{g_0}$. We use $O(q)$ to denote a quantity bounded by $C|q|$, where $C=C(n,d,g_0)$.}
	
	{Recall that the normalized trace Radon transform of a  symmetric $2$-tensor $b$ is 
		$$
		R_P(b)(x):=\fint_{T_P}\tr _P b(x+y)\,dA(y).
		$$
		where  $P$ is a rational $d$-plane and $\tr_P b$ is the trace of the induced tensor $b|_P$. The function $R_P(b)$ is constant in the $P$-direction.} 	
	Consider the Fourier decomposition
	$$
	b(x)=\sum_{\xi\in\Z^n}\widehat b(\xi)e^{2\pi i\xi(x)}.
	$$
	For every rational $d$-plane $P$, the following Fourier identity holds
	\begin{equation}\label{fourier.identity2}
		\widehat{R_P(b)}(\xi)
		=
		\begin{cases}
			\tr _P\widehat b(\xi), & \xi|_P=0,\\
			0, & \xi|_P\neq 0.
		\end{cases}
	\end{equation}
	Indeed,
	$$
	R_P(b)(x) = \fint_{T_P} \sum_{\xi\in\Z^n} \tr _P\widehat b(\xi)e^{2\pi i\xi(x+p)}\,dA(p),
	$$
	and
	$$
	\fint_{T_P}e^{2\pi i\xi(p)}\,dA(p)
	=
	\begin{cases}
		1, & \xi|_P=0,\\
		0, & \xi|_P\neq 0.
	\end{cases}
	$$
	We also use the Wiener norm
	$$
	||b||_{\mathcal W}:=\sum_{\xi\in\Z^n}|\widehat b(\xi)|,
	$$
	and the Wiener-Sobolev norm 
	$$||b||_{\mathcal W^s}=\sum_{\xi\in\Z^n}\langle\xi\rangle^s|\widehat b(\xi)|, \quad \langle\xi\rangle=(1+4\pi^2|\xi|^2)^{1/2}.$$
	We have $|b|_{L^\infty}\leq ||b||_{\mathcal W}$. If $m>s+n$ one has $||b||_{\mathcal W^s}\leq C|b|_{C^m}$ for some {$C=C(n,s,m,g_0)$ \cite[Corollary~3.3.10(a)]{grafakos-classical}}.\medskip

	Let $\text{Lip}_d(M)$ denote the set of all Lipschitz $d$-simplices on a closed manifold $M$. Given a metric $g$ on $M$ and $w\in H_d(M,\R)$ we set
	\begin{equation}\label{stable.norm.defi}
		||w||_{s,g}=\inf\left\{\sum_i |r_i|\text{vol}_g(\sigma_i):\sum_i r_i\sigma_i\in w, \sigma_i\in\text{Lip}_d(M), r_i\in\R \right\}.
	\end{equation}
	This defines the {\em stable norm} on $H_d(M,\R)$.    We use 
	$\mathcal{Z}_d(M,G)$ to denote the space of $d$-dimensional integral cycles with coefficients in $G=\Z$ or $\Z_2$ with the flat topology.  The mass of a current $\Sigma$ is denoted by ${\bf M}_g(\Sigma)$.
	
	Federer \cite[Section 5.8]{Federer74} discovered the following relationship between minimization over $\Z$-homology classes and over $\R$-homology classes. It also explains the term ``stable norm''.	
	\begin{thm}\label{thm.Federer}
		If $v\in H_d(M,\Z)$, then
		\begin{equation}\label{Federer.stable.norm}
			||v||_{s,g}=\inf\left\{\frac1k {\bf M}_g(S):S\in\mathcal Z_d(M,\Z),\, [S]=kv,\,{k\in\Z_+}\right\}.
		\end{equation}
	\end{thm}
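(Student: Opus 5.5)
The identity has two inequalities, and I will prove them separately. Call the right-hand side of \eqref{Federer.stable.norm} $\nu(v)$.

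\emph{The bound $||v||_{s,g}\leq \nu(v)$.} This direction is formal. Take an integral cycle $S$ with $[S]=kv$. By the deformation theorem, or by approximating with polyhedral/Lipschitz chains, $S$ is approximated in flat norm by Lipschitz singular cycles $\sum_i r_i\sigma_i$ in the class $kv$ with $\sum_i|r_i|\vol_g(\sigma_i)\leq {\bf M}_g(S)+\epsilon$; here one uses that on a closed manifold mass-controlled approximation preserves homology. Dividing the coefficients by $k$ gives a real cycle representing $v$ with mass at most ${\bf M}_g(S)/k+\epsilon/k$, which yields the bound.

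\emph{The bound $\nu(v)\leq ||v||_{s,g}$.} This is the substantive direction. Start from a real Lipschitz cycle $z=\sum_i r_i\sigma_i$ representing $v$, viewed as a normal current with real coefficients. First perturb the coefficients to rationals $q_i$ so that $z'=\sum q_i\sigma_i$ is still a cycle in the class $v$ and its mass differs from that of $z$ by at most $\epsilon$. The cycle condition is a system of linear equations with integer coefficients, so rational solutions are dense in the real solution set. The homology class is also a rational linear condition once the rational lattice $H_d(M,\Q)$ is taken into account, and since $v$ is integral this constraint is compatible with rational solutions. Clearing denominators with a common $N$ produces an integral cycle $Nz'$. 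Its class is $Nv$, so
$$\frac1N{\bf M}_g(Nz')\leq ||v||_{s,g}+2\epsilon.$$

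The main obstacle is the rational perturbation step: the homology constraint, and not only the boundary constraint, must be preserved. My plan is to fix a finite simplicial or Lipschitz triangulation-type complex in which $z$ lives. I would first subdivide, and push $z$ via the deformation theorem onto a cubical or simplicial complex at an arbitrarily small scale, losing only a multiplicative factor close to $1$ in mass. This reduction is itself delicate: the deformation theorem multiplies mass by a constant $C$, not $1+\epsilon$. Because of this I would instead use Federer's approach, which works in the space of real flat chains with the rational-linear structure of the affine subspace $\{c\in C_d(K,\R): \partial c=0,\ [c]=v\}$, where the mass is the continuous $\ell^1$-type functional $\sum|c_\sigma|\vol_g(\sigma)$.

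This affine space is defined over $\Q$, because $v\in H_d(K,\Z)$ has an integral representative. Its rational points are therefore dense, and the continuity of mass finishes the argument. Taking the infimum over $z$ and $\epsilon$ gives $\nu(v)\leq||v||_{s,g}$.
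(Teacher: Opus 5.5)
Your argument is correct in substance. It is not, however, what the paper does: the paper gives no proof of this statement and simply quotes Federer (Section 5.8).

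\textbf{How your route differs from Federer's.} Federer works with real flat chains, where the real homology norm is defined by real flat (normal) cycles. In that setting $\|v\|_{s,g}\le\nu(v)$ is trivial, because an integral cycle in $kv$ divided by $k$ is already a competitor. The work there goes into the other direction: a nearly minimizing real flat cycle must be passed to rational polyhedral cycles of almost the same mass and class before denominators are cleared.

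You instead exploit that the paper defines $\|\cdot\|_{s,g}$ through finite real combinations of Lipschitz simplices. Then the direction $\nu(v)\le\|v\|_{s,g}$ is pure linear algebra on the coefficient space of the fixed simplices $\sigma_i$, exactly as in your first paragraph:
\begin{itemize}
\item the cycle condition is an integer linear system;
\item $c\mapsto[c]$ is defined over $\Q$ since $H_d(M;\R)=H_d(M;\Q)\otimes\R$, so the nonempty affine set of coefficient vectors representing $v$ has dense rational points;
\item $\sum_i|c_i|\vol_g(\sigma_i)$ is continuous in the coefficients;
\item the pushforward of $Nz'$ is an integral cycle of mass at most $N\sum_i|q_i|\vol_g(\sigma_i)$.
\end{itemize}

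\textbf{The price.} The geometric measure theory moves into the direction you call formal. To replace $S$ by a Lipschitz singular cycle in the same class with $\sum_i|r_i|\vol_g(\sigma_i)\le{\bf M}_g(S)+\epsilon$ you need three things: the strong approximation theorem for integral currents, a small-mass filling of the resulting boundary error, and the fact that integral cycles close in flat norm are homologous. The deformation theorem alone only gives a bound $C\,{\bf M}_g(S)$. Alternatively, that direction is one line from the comass duality (Proposition \ref{comass.dual}). If $\alpha$ is closed with $g$-comass at most $1+\epsilon$, then $k\langle v,[\alpha]\rangle=S(\alpha)\le(1+\epsilon){\bf M}_g(S)$.

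\textbf{Two small repairs.}
\begin{enumerate}
\item Clearing denominators only gives $[Nz']=Nv$ modulo torsion in $H_d(M,\Z)$. Multiply further by the exponent of the torsion subgroup; this is irrelevant on $T^n$ but needed for general $M$ as stated.
\item Your ``main obstacle'' paragraph is a detour. $K$ must be the finite chain complex spanned by the $\sigma_i$ and their faces, not a triangulation of $M$; pushing $z$ onto a triangulation costs mass, as you note yourself. If you want $v$ represented inside $K$, enlarge $K$ by an integral cycle $z_0\in v$ and a real $(d+1)$-chain $w$ with $\partial w=z-z_0$. Then approximate $z$ by rational points of $z_0+B_d(K;\R)$, which contains $z$ and is visibly defined over $\Q$.
\end{enumerate}
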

	We abuse notation and use $||\Sigma||_{s,g}$ to denote $||[\Sigma]||_{s,g}$ when $\Sigma\in\mathcal Z_d(M,\Z)$.
	
	{For every rational $d$-plane $P$, $T_P\subset \T^n$ is calibrated and so ${\bf M}(T_P)=||T_P||_s$ (with respect to the fixed flat metric).}

	{\subsection{Volume spectrum}\label{vs.defi}
		
		The space $\mathcal{Z}_{n-1}(M,\mathbb{Z}_2)$ is weakly homotopically equivalent to $\mathbb{RP}^\infty$. Therefore the cohomology with $\mathbb{Z}_2$ coefficients is
		$$
		H^k(\mathcal{Z}_{n-1}(M,\mathbb{Z}_2), \mathbb{Z}_2)=\mathbb{Z}_2=\{0, \overline{\lambda}^k\},
		$$
		where $\overline{\lambda}$ is the generator of $H^1(\mathcal{Z}_{n-1}(M,\mathbb{Z}_2), \mathbb{Z}_2)=\mathbb{Z}_2$ and  $\overline{\lambda}^k$ is the $k$-th cup power of $\overline{\lambda}$.
		
		A $k$-sweepout is a continuous map $\Phi:X \rightarrow \mathcal{Z}_{n-1}(M,\mathbb{Z}_2)$ with no mass concentration (see \cite{mnindex} for the definition), where  $X$ is a finite-dimensional compact simplicial complex that depends on $\Phi$, such that
		$
		\Phi^*(\overline{\lambda}^k)\neq 0.
		$
		We denote $\Phi \in \mathcal{P}_k$. For any integer $k\geq 1$, the $k$-width of $(M,g)$ is the min-max number
		$$
		\omega_k(M,g)= \inf_{\Phi\in \mathcal{P}_k} \sup_{x\in {\rm dmn}(\Phi)} {\bf M}(\Phi(x)),
		$$
		where  ${\rm dmn}(\Phi)$ is the domain of $\Phi$.  
		
	}	
	\section{Proof of Theorem \ref{higher.codimention.intro}}
	
	The goal of this section is to prove the following result.
	\begin{thm}\label{higher.codimension.no.local.deformation2} 
		Consider $g_0$  a unit-volume flat metric on $T^n$, $1\leq d\leq n-2$, and  $r>7n$. There is $\delta>0$ such that the following holds:
		
		Let $g=g_0+h$, where $h$ is a divergence-free symmetric $2$-tensor for which, for every rational $d$-plane $P$,
		$$
		{{\bf M}(T_P)=||T_P||_{s,g}.}
		$$
		If $|h|_{C^r}<\delta$, then $h=0$.
	\end{thm}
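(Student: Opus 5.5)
Set $\varepsilon=|h|_{C^r}$. The plan is to show that the hypothesis forces $\widehat h(\xi)=0$ for every $\xi$. We will prove, frequency by frequency, a quadratic bound $|\widehat h(\xi)|\lesssim \langle\xi\rangle^{-a}\|h\|_{\mathcal W^{s}}\,\|h\|_{\mathcal W^{s'}}$ with exponents chosen so that summing over $\xi$ gives $\|h\|_{\mathcal W^{k}}\le C\delta\|h\|_{\mathcal W^{k}}$ for a fixed $k$. Since $r>7n$, the inequality $\|h\|_{\mathcal W^s}\le C|h|_{C^r}$ controls every Wiener--Sobolev norm we need. Taking $\delta$ small then gives $h=0$.

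\textbf{Step 1: first variation gives the lower bound.} For a rational $d$-plane $P$ and $x\in T^n$, consider the translate $x+T_P$. It is homologous to $T_P$, so
\[
{\bf M}(T_P)=\|T_P\|_{s,g}\le {\bf M}_g(x+T_P)={\bf M}(T_P)\fint_{T_P}\sqrt{\det(I+h|_P)}(x+y)\,dA(y).
\]
Expanding the square root gives
\[
0\le \tfrac12 R_P(h)(x)+O\Bigl(\fint_{x+T_P}|h|^2\Bigr).
\]
This holds for all $x$, and the same argument applies to competitors $x+T_P$ deformed by small graphs.

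\textbf{Step 2: use a near-minimizer to get the upper bound.} Let $\Sigma_P$ be a $g$-minimizer in the class of $T_P$ (in the sense of Theorem \ref{thm.Federer}). Instead of the Jacobi operator, whose inverse is not uniformly bounded in $P$, we use a calibration argument. Let $\omega_P$ be the constant-coefficient $d$-form calibrating $P$ for $g_0$. Averaging the mass of $\Sigma_P$ against $\omega_P$ gives
\[
{\bf M}_g(\Sigma_P)\ge\int_{\Sigma_P}\omega_P\cdot(\text{$g$-density})\ge {\bf M}(T_P)\Bigl(1+\tfrac12\langle R_P(h)\rangle\Bigr)-C\fint|h|^2 .
\]
The error uses only $L^2$-type averages of $h$, and these are uniform in $P$. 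Combining with equality and with Step 1, we expect to conclude that $R_P(h)(x)$ is nonnegative and has mean $O(\|h\|^2)$. Hence $\|R_P(h)\|_{L^1}$, and in particular $|\widehat{R_P(h)}(\xi)|$, is $O(\|h\|_{\mathcal W}^2)$ uniformly in $P$. The mean of $R_P(h)$ is the $\xi=0$ mode, so this gives $\tr_P\widehat h(0)=O(\|h\|_{\mathcal W}^2)$ for all $P$. The nonzero modes are handled by the $L^1$ bound together with \eqref{fourier.identity2}, which give
\[
|\tr_P\widehat h(\xi)|\le C\|h\|_{\mathcal W}^2\quad\text{whenever }\xi|_P=0 .
\]

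\textbf{Step 3: quantitative injectivity (Proposition \ref{injectivity.estimate}).} Fix $\xi\ne0$ and use $\xi\cdot\widehat h(\xi)=0$ (divergence-free). Since $d\le n-2$, the rational $d$-planes inside $\xi^\perp$ span enough directions that the traces $\tr_P\widehat h(\xi)$ over such $P$ determine $\widehat h(\xi)$ restricted to $\xi^\perp$. The divergence condition then recovers all of $\widehat h(\xi)$. The key quantitative point is that we may choose these planes with integer bases of size $O(|\xi|^{c})$. A linear-algebra inversion then gives
\[
|\widehat h(\xi)|\le C\langle\xi\rangle^{m}\max_P|\tr_P\widehat h(\xi)|\le C\langle\xi\rangle^{m}\|h\|_{\mathcal W}^2 .
\]
This polynomial loss is the reason for the condition $r>7n$.

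\textbf{Step 4: close the estimate.} The bound in Step 3 loses powers of $\xi$, so we cannot sum it directly. We interpolate: $|\widehat h(\xi)|\le \langle\xi\rangle^{-r'}\|h\|_{\mathcal W^{r'}}$ trivially. Taking the minimum of this with the Step 3 bound and summing gives $\|h\|_{\mathcal W}\le C\|h\|_{\mathcal W}^{2\theta}\,\|h\|_{\mathcal W^{r'}}^{1-\theta}$. Since $\|h\|_{\mathcal W^{r'}}\le C\varepsilon$, this yields $\|h\|_{\mathcal W}\le C\delta^{\beta}\|h\|_{\mathcal W}$ for some $\beta>0$. Hence $h=0$ once $\delta$ is small.

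\textbf{Main obstacle.} Step 2 is where the uniformity in $P$ is at stake. The error must be bounded by $\|h\|^2$ with a constant independent of $P$, which means avoiding any inversion of the Jacobi operator on $T_P$. I would first try the calibration and averaging argument described there. If it proves insufficient, the fallback is to allow $P$-dependent constants that grow polynomially in the height of $P$. This polynomial growth can then be absorbed through the choice of planes of height $O(|\xi|^c)$ in Step 3.
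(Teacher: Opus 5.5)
Your Step 2 carries the whole difficulty of the theorem, and the inequality you write there is false for a fixed plane $P$. A calibration only controls $\int_\Sigma\omega_P$, which is homological. The integral $\int_\Sigma\phi\,\omega_P$ with $\phi$ non-constant is not homological, and a $g$-minimizer in $[T_P]$ simply sits where $R_P(h)$ is smallest. So from a single $P$ you can learn at best that $\min_x R_P(h)(x)$ is small, never that its mean $\tr_PK$ is small, where $K=\widehat h(0)$.

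Concretely, take a primitive $\xi\in\Z^n$ with $\xi|_P=0$, and a smooth $\phi\geq0$ on $\R/\Z$ that vanishes somewhere but not identically. Set
\[
h=\epsilon\,\phi(\xi(x))\,\Pi_P ,
\]
where $\Pi_P$ is the orthogonal projection onto $P$, viewed as a symmetric $2$-tensor. Then:
\begin{itemize}
\item $h$ is divergence-free, because $\Pi_P\xi=0$.
\item $g\geq g_0$, so $\|T_P\|_{s,g}\geq{\bf M}(T_P)$.
\item A translate $x_0+T_P$ with $\phi(\xi(x_0))=0$ has $g$-area exactly ${\bf M}(T_P)$.
\end{itemize}
Hence the hypothesis holds at this $P$, and $x_0+T_P$ is a $g$-minimizer. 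Yet $R_P(h)=\epsilon d\,\phi(\xi(x))$ has mean $\tr_PK=\epsilon d\int_0^1\phi$, which is of order $\epsilon$, not $\epsilon^2$. So your inequality ${\bf M}_g(\Sigma_P)\geq{\bf M}(T_P)(1+\frac12\langle R_P(h)\rangle)-C\fint|h|^2$ fails, and so does $\|R_P(h)\|_{L^1}=O(\|h\|^2)$. No $P$-dependent constant, as in your fallback, can repair this. (Also, in codimension $\geq2$ the stable norm is generally only approached by near-minimizers in classes $[kT_P]$, not attained by a minimizer in $[T_P]$.)

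The missing idea is to choose the plane depending on $h$, and to aim only at the single constant tensor $K$. Integrating Step 1 already gives
\[
\|R_Q(h)\|_{L^1}\leq\tr_QK+2C\|h\|_{\mathcal W}^2
\]
for every rational $Q$, so a bound on $|K|$ controls all planes at once. The paper uses Lemma \ref{choice.vector} to pick a rational $P$ with $|\tr_PK|\geq c_0|K|$ and $|\xi^\top|\geq c_0L^{-n}|\xi^\perp|$ for all $0<|\xi|\leq L$. This rules out low frequencies orthogonal or nearly orthogonal to $P$, which is the mechanism behind the example above.

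For such $\xi$, identity \eqref{calibration.fourier} writes $e^{2\pi i\xi(x)}\Omega_P$ as an exact form plus $|\xi^\perp|/|\xi^\top|$ times a form vanishing on $\vec P$. So on a near-minimizer $\Sigma\in[kT_P]$ the low modes contribute at most $CL^n\|h\|_{\mathcal W}\int_\Sigma|\vec\tau-\vec P|\,dA$. This is absorbed by the calibration excess $\frac14\int_\Sigma|\vec\tau-\vec P|^2dA$ at a cost of $CL^{2n}\|h\|_{\mathcal W}^2$. The high modes $E_L$ are controlled by smoothness. Taking $L\approx\|h\|_{\mathcal W}^{-1/(4n)}$ gives only $|K|\lesssim\|h\|_{\mathcal W}^{3/2}$, not $\|h\|^2$. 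Since $3/2>1$, this still closes your Step 4, as in Lemma \ref{lemm.quadr.estimate}.

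It is this tail estimate, with $s=6n$, that forces $r>7n$, not a loss in Step 3. In fact Step 3 needs no polynomial loss at all. Once $\sup_P\|R_P(h)\|_{L^1}\leq M$, density of rational planes in $G_d(\xi^\perp)$ gives $|\widehat h(\xi)|\leq CM$ uniformly in $\xi$, as in Proposition \ref{injectivity.estimate}.
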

	One immediate consequence of Theorem \ref {higher.codimension.no.local.deformation2} is {Theorem \ref{higher.codimention.intro}}\footnote{Using the Ebin-Slice Theorem in \cite[Proposition 11]{BM11} for instance.} 
	\begin{cor}\label{codimension.two.cor}
		Consider  $g_0$ a unit-volume flat metric on $T^n$ and $1\leq d\leq n-2$. There is $\mathcal U$ an  open neighborhood of $g_0$ in the smooth norm, so that if $g\in \mathcal U$ is such that for every rational $d$-plane $P$,
		$$
		{{\bf M}(T_P)=||T_P||_{s,g},}
		$$
		then $g$ is isometric to $g_0$.
	\end{cor}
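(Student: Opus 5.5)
The corollary follows from Theorem \ref{higher.codimension.no.local.deformation2} once we put $g$ in divergence-free gauge by an Ebin slice argument. Fix $r>7n$ and let $\delta>0$ be given by Theorem \ref{higher.codimension.no.local.deformation2}. The slice theorem (as in \cite[Proposition 11]{BM11}) applies to the flat metric $g_0$. It provides a smooth-topology neighborhood $\mathcal U$ of $g_0$ such that every $g\in\mathcal U$ can be written as $g=\phi^*(g_0+h)$. Here $\phi$ is a diffeomorphism of $T^n$ close to the identity, and in particular isotopic to it. The tensor $h$ is symmetric and divergence-free with respect to $g_0$, and satisfies $|h|_{C^r}<\delta$. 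To arrange the last bound, we shrink $\mathcal U$ so that the $C^{r'}$-norm of $g-g_0$ is small for a suitable $r'\ge r$; then the $C^r$-smallness of $h$ follows from the continuity of the slice map.

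Next we check that the hypothesis is invariant under this gauge change. The stable norm is a diffeomorphism invariant: $\|w\|_{s,\phi^*\tilde g}=\|\phi_*w\|_{s,\tilde g}$. Since $\phi$ is isotopic to the identity, $\phi_*$ acts trivially on $H_d(T^n,\R)$. Hence $\|T_P\|_{s,g_0+h}=\|T_P\|_{s,g}={\bf M}(T_P)$ for every rational $d$-plane $P$. The mass ${\bf M}$ is computed with the fixed flat metric $g_0$, so it does not change.

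Theorem \ref{higher.codimension.no.local.deformation2} now applies to $g_0+h$ and gives $h=0$. Therefore $g=\phi^*g_0$ is isometric to $g_0$.

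The only step requiring care is the slice decomposition: we need the correct normalization (divergence-free rather than trace-adjusted) and $C^r$ control of $h$ in terms of a smooth norm of $g-g_0$. Both are standard consequences of the slice theorem, since the isometry group of a flat torus is compact and the gauge-fixing equation $\operatorname{div}_{g_0}(\phi^*g-g_0)=0$ can be solved by the implicit function theorem. This solvability uses that the operator $X\mapsto\operatorname{div}(L_Xg_0)$ is invertible modulo Killing fields, which here are the parallel fields. The Killing-field directions are handled by the translations included in $\phi$.
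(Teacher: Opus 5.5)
Your proposal is correct and is the paper's own argument: the paper deduces the corollary from Theorem \ref{higher.codimension.no.local.deformation2} via the Ebin slice theorem \cite[Proposition 11]{BM11}, and the points you spell out are exactly what its one-line footnote leaves implicit, namely that the stable norm is diffeomorphism-invariant, that $\phi$ isotopic to the identity acts trivially on $H_d(T^n,\R)$, that ${\bf M}$ is computed with the fixed metric $g_0$, and that $g-g_0$ should be taken small in a higher $C^{r'}$ norm to absorb the derivative loss of the slice map. One small correction to your last paragraph: the parallel fields in the kernel of $X\mapsto\operatorname{div}(L_Xg_0)$ are removed by restricting to mean-zero $X$, while the corresponding cokernel obstruction vanishes automatically because $\operatorname{div}_{g_0}$ of any symmetric $2$-tensor on the flat torus has zero mean, so it is not really ``handled by the translations included in $\phi$.''
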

	
	\subsection{Basic idea}\label{basic.idea.higher.codimension}  Assume that $g=g_0+h$ satisfies the hypothesis of Theorem \ref{higher.codimension.no.local.deformation2}.  If we show that $R_P(h)=0$ for all rational $P$ we see that $h=0$ (Proposition \ref{injectivity.estimate}). A Taylor expansion of the inequality ${\bf M}(T_P)\leq {\bf M}_{g}(T_P)$ shows that  (see \eqref{first.inequality.2})
	\begin{equation}\label{basic.idea.area}
		||R_P(h)||_{L^1}\lesssim \tr_P(K)+||h||_{\mathcal W}^2\quad\text{where }K=\int_{T^n}hdV.
	\end{equation}
	The main idea is to find a carefully chosen rational torus $T_P$ (Lemma \ref{choice.vector}) such that ${{\bf M}(T_P)=||T_P||_{s,g},}$ gives  $|K|\lesssim ||h||_{\mathcal W}^{3/2}$.  In that case we have
	$$||R_{Q}(h)||_{L^1}\lesssim \tr_Q(K)+||h||_{\mathcal W}^2\lesssim ||h||_{\mathcal W}^{3/2}$$
	for every rational plane $Q$. Using Lemma \ref{lemm.quadr.estimate} we deduce that $h=0$ if it is suitably  small.

	{We }explain how to prove the estimate for $|K|$ in the particular case where $$h=K+\mathcal{A} e^{2\pi i \xi_0(x)}+\bar{\mathcal{A}}  e^{-2\pi i \xi_0(x)}=K+\tilde h,\quad\xi_0\in\Z^n\setminus\{0\},$$  where $\mathcal{A}$ is a complex symmetric $2$-tensor. Choose a rational plane $P$  so that $|\xi_0^{\bot}|\lesssim|\xi_0^{\top}|$ and $|K|\lesssim|\tr_P K|$. The fact that this is possible is the content of   Lemma \ref{choice.vector}. 
	
	We already know that $\tr_P(K)\gtrsim -||h||^2_{\mathcal W}$ from {\eqref{basic.idea.area}}. Thus if we show that $\tr_P(K)\lesssim ||h||^2_{\mathcal W}$ we deduce that $|K|\lesssim||h||^2_{\mathcal W}$, which gives the desired estimate  for $|K|$.  
	
	{For the sake of the argument,  assume} there is an area-minimizing $\Sigma$ so that ${\bf M}_{g}(\Sigma)={\bf M}(T_P)$ and let $\Omega_P$ denote the calibration of $T_P$. A Taylor expansion of the area implies that 
	\begin{equation}\label{basic.idea.estimate.taylor}
		\int_\Sigma\tr _P h\,\Omega_P+\frac12\int_\Sigma |T_x\Sigma- P|^2dA\lesssim \int_\Sigma |h|^2\,dA.
	\end{equation}
	The condition that $|\xi_0^{\bot}|\lesssim|\xi_0^{\top}|$ implies (see \eqref{fourier.integration} and \eqref{fourier.integration2})
	$$\left|\int_\Sigma e^{2\pi i \xi_0(x)}\Omega_P\right| \lesssim \int_\Sigma |T_x\Sigma- P|dA$$ 
	and so
	$$
	\left| \int_\Sigma\tr _P \tilde h\,\Omega_P\right| \lesssim \int_\Sigma |\mathcal{A}||T_x\Sigma- P|dA.$$
	Therefore
	\begin{align*}
		\int_\Sigma\tr _P h\,\Omega_P&=\tr _P K {\bf M}(T_P)+\int_\Sigma\tr _P \tilde h\,\Omega_P\\
		&\gtrsim \tr _P K {\bf M}(T_P)- \int_\Sigma |\mathcal{A}||T_x\Sigma- P|dA.
	\end{align*}
	Inserting this estimate in \eqref{basic.idea.estimate.taylor} we obtain, after absorbing $|h||T_x\Sigma- P|$, that
	$$\tr_P(K)\lesssim \fint_\Sigma|h|^2dA+|\mathcal{A}|^2.$$
	This inequality implies $\tr_P(K)\lesssim ||h||_{\mathcal W}^2$.
	
	\subsection{Preliminaries}
	We start by deriving the necessary integral estimates. 
	
	\begin{prop}\label{injectivity.estimate}
		Let $1\leq d\leq n-2$. There is a constant $C=C(n,d,g_0)>0$ such that for every smooth divergence-free symmetric $2$-tensor $b$ we have
		$$
		||b||_{L^2}^2\leq C\left(\sup_{\text{rational }d\text{-plane }P}||R_P(b)||_{L^1(T^n)}\right)||b||_{\mathcal W}.
		$$
		In particular, if $R_P(b)=0$ for every rational $d$-plane $P$, then $b=0$.
	\end{prop}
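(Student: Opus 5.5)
The plan is to work entirely on the Fourier side, using the identity \eqref{fourier.identity2} and Parseval's identity
$$
||b||_{L^2}^2=\sum_{\xi\in\Z^n}|\widehat b(\xi)|^2 .
$$
Set $S:=\sup_P||R_P(b)||_{L^1}$. The key pointwise claim will be that for every $\xi\in\Z^n$,
$$
|\widehat b(\xi)|\leq C\,S,\qquad C=C(n,d,g_0).
$$
Granting this, we get $|\widehat b(\xi)|^2\leq C\,S\,|\widehat b(\xi)|$ for each $\xi$. Summing over $\xi$ gives $||b||_{L^2}^2\leq C\,S\,||b||_{\mathcal W}$, which is the statement. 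The final assertion follows at once, since $S=0$ forces $b=0$.

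To prove the pointwise claim, fix $\xi$ and let $P$ be any rational $d$-plane with $\xi|_P=0$. Then \eqref{fourier.identity2} says that $\tr_P\widehat b(\xi)$ is the $\xi$-th Fourier coefficient of $R_P(b)$. Hence
$$
|\tr_P\widehat b(\xi)|\leq ||R_P(b)||_{L^1}\leq S .
$$
It remains to show that the symmetric form $B=\widehat b(\xi)$ is controlled by its traces over such planes.

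\emph{Case $\xi\neq 0$.} Here I use the divergence-free condition, which in Fourier variables reads $\widehat b(\xi)(\xi,\cdot)=0$. So $B$ is determined by its restriction to $V=\xi^\perp$, a rational hyperplane of dimension $m=n-1\geq d+1$. The planes $P$ with $\xi|_P=0$ are exactly the rational $d$-planes contained in $V$.

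Since $V\cap\Z^n$ has rank $n-1$, rational $d$-planes in $V$ are dense in the Grassmannian $G_d(V)$. Because $P\mapsto\tr_P B$ is continuous on $G_d(V)$, we obtain $|\tr_P B|\leq S$ for all $d$-planes $P\subset V$, rational or not.

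Now I need a linear-algebra fact: for $d<m$, the quantity $N_V(B)=\sup_{P\in G_d(V)}|\tr_P B|$ is a norm on $\mathrm{Sym}(V)$. To check definiteness, diagonalize $B$ in an orthonormal basis with eigenvalues $\lambda_1,\dots,\lambda_m$. If $N_V(B)=0$, every sum of $d$ distinct eigenvalues vanishes. Since $d<m$, exchanging one index in such a sum shows $\lambda_i=\lambda_j$ for all $i,j$, and then $d\lambda_1=0$, so $B=0$.

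\emph{Uniformity in $\xi$.} This is the step I expect to be the only real subtlety, because the constant must not depend on $\xi$. The point is that $N_V$ is defined using all planes in $V$, not just the rational ones, and the rational planes only entered through density. Moreover $O(n)$, acting isometrically for the Euclidean metric $g_0$, maps any hyperplane $V$ to any other and intertwines the norms $N_V$. Hence the equivalence constant $c$ in $|B|\leq c\,N_V(B)$ is one fixed constant, obtained by compactness on a single reference hyperplane. This gives $|\widehat b(\xi)|\leq cS$ for all $\xi\neq0$.

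\emph{Case $\xi=0$.} No divergence condition is available, but every rational $d$-plane is admissible. The same argument with $V=\R^n$ (where $d<n$) and density of rational $d$-planes in $G_d(\R^n)$ gives $|\widehat b(0)|\leq c'S$. This completes the pointwise claim and hence the proposition.
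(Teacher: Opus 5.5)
Your argument is correct and follows the paper's proof almost step for step. Both use the bound $|\tr_P\widehat b(\xi)|\le\|R_P(b)\|_{L^1}$ for $\xi|_P=0$, density of rational $d$-planes in $G_d(\ker\xi)$ and $G_d(\R^n)$, the reduction $\widehat b(\xi)(\xi,\cdot)=0$ for $\xi\neq0$, and the final estimate $\sum_\xi|\widehat b(\xi)|^2\le\bigl(\sup_\xi|\widehat b(\xi)|\bigr)\|b\|_{\mathcal W}$.

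The only real difference is how you get the finite-dimensional inequality $|S|\le C\sup_{P\in G_d(E)}|\tr_P S|$. The paper derives it from the explicit second-moment identity \eqref{integration.formula} on the Grassmannian. That identity is manifestly $O(E)$-invariant and holds verbatim for complex $S$. It is also reused in Lemma \ref{choice.vector}, to bound from below the measure of planes on which $|\tr_P S|$ is large. You instead prove definiteness through eigenvalue sums and obtain the constant by compactness and rotation invariance. This is more elementary but non-quantitative, and it only yields the sup bound.

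There is one point to repair. $\widehat b(\xi)$ is in general a complex symmetric matrix; reality of $b$ only gives $\widehat b(-\xi)=\overline{\widehat b(\xi)}$. Complex symmetric matrices need not be orthogonally diagonalizable: for example $\begin{pmatrix}1&i\\ i&-1\end{pmatrix}$ is nonzero and nilpotent. So your eigenvalue argument, as written, covers only real $B$. The fix is immediate. Write $B=B_1+iB_2$ with $B_1,B_2$ real symmetric. Since $\tr_P B_1$ and $\tr_P B_2$ are the real and imaginary parts of $\tr_P B$, you get $N_V(B_k)\le N_V(B)$ for $k=1,2$. Hence $|B|\le\sqrt2\,c\,N_V(B)$, and the rest of your proof goes through unchanged.
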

	\begin{proof}
		{Set}
		$$
		M:=\sup_{\text{rational }d\text{-plane }P}||R_P(b)||_{L^1(T^n)}.
		$$
		Whenever $\xi|_P=0$, the Fourier identity gives
		\begin{equation}\label{fourier.identity}
			|\tr _P\widehat b(\xi)|
			=
			|\widehat{R_P(b)}(\xi)|
			\leq
			||R_P(b)||_{L^1(T^n)}
			\leq M.
		\end{equation}
		
		Let $E$ be an $N$-dimensional Euclidean vector space,  $1\leq d\leq N-1$, and $G_d(E)$ the Grassmannian of $d$-planes. If $S$ is a  symmetric $2$-tensor on $E$ (possibly complex-valued), write
		$$
		S=\frac{\tr _E S}{N}\text{Id}+S_0,  \quad\text{where } \tr _E S_0=0.
		$$
		Then, with $\mu$ being the normalized $O(E)$-invariant probability measure on $G_d(E)$, we have \cite[Theorem~2.4]{bodmann-ehler-graf}
		\begin{equation}\label{integration.formula}
			\int_{G_d(E)}
			|\tr _P S|^2\,d\mu(P)
			=
			\frac{d^2}{N^2}|\tr _E S|^2
			+
			\frac{2d(N-d)}{N(N-1)(N+2)}|S_0|^2.
		\end{equation}
		In particular, since $1\leq d\leq N-1$, both coefficients are positive. Hence there is $C=C(N,d)>0$ such that
		\begin{equation}\label{integration.grass}
			\int_{G_d(E)}
			|\tr _P S|^2\,d\mu(P)
			\geq
			C|S|^2.
		\end{equation}
		Consequently,
		\begin{equation}\label{grass.identity}
			|S|
			\leq
			C^{-1/2}
			\sup_{P\in G_d(E)}|\tr _P S|.
		\end{equation}
		
		We apply this estimate to the Fourier coefficients of $b$. First consider the zero mode. Since $0\perp P$ for every $P$, \eqref{fourier.identity} gives
		$$
		|\tr _P\widehat b(0)|\leq M
		$$
		for every rational $d$-plane $P\subset\R^n$. Rational $d$-planes are dense in $G_d(\R^n)$ and  so, by continuity,
		$$
		|\tr _P\widehat b(0)|\leq M
		$$
		for every real $d$-plane $P\subset\R^n$. Applying \eqref{grass.identity} with $E=\R^n$ gives, for some other $C=C(n,d)$,
		$$
		|\widehat b(0)|\leq C M.
		$$
		
		Now fix $\xi\in\Z^n\setminus\{0\}$, and set $E=\ker\xi$. Then $\dim E=n-1$. Since $1\leq d\leq n-2$, we have $1\leq d\leq \dim E-1$. For every rational $d$-plane $P\subset E$, we have $\xi|_P=0$, and hence
		$$
		|\tr _P\widehat b(\xi)|\leq M.
		$$
		Rational $d$-planes are dense in $G_d(E)$ {since $E$ is rational} and  so
		$$
		|\tr _P\widehat b(\xi)|\leq M
		$$
		for every real $d$-plane $P\subset E$.
		
		Since $b$ is divergence-free, with the convention $(\delta b)_j=-\partial_i b_{ij}$, we have
		$$
		\xi_i\widehat b_{ij}(\xi)=0.
		$$
		Equivalently, $\widehat b(\xi)(\xi,\cdot)=0$. Thus $\widehat b(\xi)$ is determined by its restriction to $E=\ker\xi$. Applying \eqref{grass.identity} on $E$ to $S=\widehat b(\xi)|_E$, we get
		$$
		|\widehat b(\xi)|\leq C M.
		$$
		Therefore $|\widehat b(\xi)|\leq C M$ for every $\xi\in\Z^n$.
		
		Finally, by Plancherel and the definition of $\mathcal W$,
		$$
		||b||_{L^2}^2 = \sum_{\xi\in\Z^n}|\widehat b(\xi)|^2 \leq \left(\sup_{\xi\in\Z^n}|\widehat b(\xi)|\right) \sum_{\xi\in\Z^n}|\widehat b(\xi)| \leq C M||b||_{\mathcal W}.
		$$
	\end{proof}

	We use the following standard interpolation estimate. If $s>0$, then
	\begin{equation}\label{interpolation}
		||b||_{\mathcal W}\leq C||b||_{L^2}^{1-\theta}||b||_{\mathcal W^s}^{\theta},\qquad
		\theta=\frac{n/2}{s+n/2},
	\end{equation}
	{for some $C=C(n,s,g_0)$.} Indeed, for any $L\geq1$,
	$$
	\sum_{{\xi\in\Z^n}}|\widehat b(\xi)|
	\leq
	\sum_{|\xi|\leq L}|\widehat b(\xi)|+\sum_{|\xi|>L}|\widehat b(\xi)|.
	$$
	By Cauchy-Schwarz and Plancherel,
	$$
	\sum_{|\xi|\leq L}|\widehat b(\xi)|\leq CL^{n/2}||b||_{L^2}.
	$$
	Also,
	$$
	\sum_{|\xi|>L}|\widehat b(\xi)|\leq L^{-s}||b||_{\mathcal W^s}.
	$$
	Choosing
	$$
	L=\left(\frac{||b||_{\mathcal W^s}}{||b||_{L^2}}\right)^{1/(s+n/2)}
	$$
	gives
	$$
	||b||_{\mathcal W}\leq C||b||_{L^2}^{1-\theta}||b||_{\mathcal W^s}^{\theta},\qquad
	\theta=\frac{n/2}{s+n/2}.
	$$
	
	The interpolation estimate we just discussed implies the following result.
	
	\begin{lemm}\label{lemm.quadr.estimate}
		Given $C_0$ and $r>3n$, there is {$\delta=\delta(C_0,n,d,r,g_0)>0$} with the following property. If the divergence-free symmetric $2$-tensor $b$ is such that
		$$
		\sup_{\text{rational }d\text{-plane }P}||R_P(b)||_{L^1(T^n)}\leq C_0||b||_{\mathcal W}^{3/2}
		$$
		and $|b|_{C^r}<\delta$, then $b=0$.
	\end{lemm}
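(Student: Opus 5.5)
Combine Proposition \ref{injectivity.estimate} with the interpolation estimate \eqref{interpolation}. Suppose $b\neq 0$ satisfies the hypothesis. Set
$$M:=\sup_{P}||R_P(b)||_{L^1(T^n)}\leq C_0||b||_{\mathcal W}^{3/2}.$$
Proposition \ref{injectivity.estimate} then gives
$$||b||_{L^2}^2\leq C M||b||_{\mathcal W}\leq CC_0||b||_{\mathcal W}^{5/2}.$$
Thus the $L^2$ norm is controlled by a superlinear power of the Wiener norm. Interpolation goes in the other direction, bounding $\mathcal W$ by $L^2$ up to a small loss. This loss should be beatable when $s$ is large, because $\theta$ is then small.

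Concretely, fix $s$ with $r>s+n$, so that $||b||_{\mathcal W^s}\leq C|b|_{C^r}$ by the cited Fourier estimate. Since $r>3n$, we may take $s=2n$, which gives $\theta=\frac{n/2}{2n+n/2}=\frac15$. Then \eqref{interpolation} gives
$$||b||_{\mathcal W}\leq C||b||_{L^2}^{4/5}||b||_{\mathcal W^s}^{1/5}.$$
Inserting the bound on $||b||_{L^2}$ yields
$$||b||_{\mathcal W}\leq C\,||b||_{\mathcal W}^{(5/4)(4/5)}\,|b|_{C^r}^{1/5}=C\,||b||_{\mathcal W}\,|b|_{C^r}^{1/5},$$
where $C$ depends on $C_0,n,d,r,g_0$.

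The exponent on $||b||_{\mathcal W}$ came out to exactly $1$, so the power count closes. A general $s$ gives a smaller $\theta$ and hence strictly more room. Since $b\neq0$ forces $||b||_{\mathcal W}>0$, we may divide to obtain $1\leq C|b|_{C^r}^{1/5}$. This contradicts $|b|_{C^r}<\delta$ once $\delta<C^{-5}$, so $b=0$.

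The main obstacle is only the exponent bookkeeping: $\theta$ must satisfy
$$(1-\theta)\cdot\tfrac54\geq 1,\quad\text{i.e. }\theta\leq \tfrac15,$$
and the chosen $s$ must also satisfy $r>s+n$. Both conditions are met precisely because $r>3n$, which explains the hypothesis.

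Two degenerate cases need a check. If $||b||_{L^2}=0$ then $b=0$ outright. The interpolation inequality itself holds for $b\neq0$, because the optimizing $L$ is at least $1$ in the regime where $||b||_{\mathcal W^s}\geq||b||_{L^2}$, and that regime always holds since $\langle\xi\rangle\geq1$.
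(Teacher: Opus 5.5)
Your proof is correct and follows the paper's argument. You combine Proposition \ref{injectivity.estimate} with the hypothesis to get $||b||_{L^2}^2\leq C||b||_{\mathcal W}^{5/2}$, then apply interpolation with $s=2n$, $\theta=1/5$, so the exponents close exactly. The only cosmetic difference is the direction of substitution: you insert the $L^2$ bound into the interpolation inequality and divide by $||b||_{\mathcal W}$, whereas the paper inserts the interpolation inequality into the $L^2$ bound and divides by $||b||_{L^2}^2$.
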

	\begin{proof}
		In what follows, $C$ denotes a generic constant depending only on $C_0,d,n,r$ and $g_0$.
		From Proposition \ref{injectivity.estimate} and the hypothesis,
		$$
		||b||_{L^2}^2\leq C||b||_{\mathcal W}^{5/2}.
		$$
		Set $s=2n$. The interpolation inequality \eqref{interpolation} gives
		$$
		||b||_{\mathcal W}\leq C||b||_{L^2}^{1-\theta}||b||_{\mathcal W^s}^\theta,\qquad \theta=\frac{1}{5}.
		$$
		Therefore
		$$
		||b||_{L^2}^2\leq C||b||_{L^2}^{\frac52(1-\theta)}||b||_{\mathcal W^s}^{\frac52\theta}=C||b||_{L^2}^2||b||_{\mathcal W^s}^{1/2}.
		$$
		If $b\neq0$, dividing by $||b||_{L^2}^2$ gives
		$
		||b||_{\mathcal W^s}\geq C.
		$ We have $||b||_{\mathcal W^s}$ bounded by $|b|_{C^r}$ and so $b=0$ if $|b|_{C^r}$  is small.
	\end{proof}

	\subsection{Proof of Theorem \ref{higher.codimension.no.local.deformation2}}
	\begin{proof}
		For every $L\in \N$ we set
		$$ 
		K=\hat h(0),\quad h_L=\sum_{{0\neq\xi}\in\Z^n,\, |\xi|\leq L}\widehat h(\xi)e^{2\pi i \xi(x)},\quad E_L=\sum_{{|\xi|>L}}\widehat h(\xi)e^{2\pi i \xi(x)}
		$$
		so that
		$$
		h=K+h_L+E_L.
		$$
		We note that $K$ is a constant real symmetric $2$-tensor. We will repeatedly use the following upper bound
		$$|h|(x)\leq ||h||_{\mathcal W} \quad\text{and}\quad |E_L|(x)\leq ||E_L||_{\mathcal W}\text{ for all }x\in T^n.$$
		If $||h||_{\mathcal W}=0$, then $h=0$. Thus we assume $||h||_{\mathcal W}>0$.
		
		Since every translate $x+T_P$ is an admissible competitor in the class $[T_P]$, the hypothesis {${\bf M}(T_P)=||T_P||_{s,g}$
			implies (see \eqref{Federer.stable.norm}) that}
		$$
		{\bf M}_g(x+T_P)\geq {\bf M}(T_P).
		$$
		We assume $|h|\leq 1$. {With respect to $g_0$-orthonormal coordinates we have
			$
			dA_g=\sqrt{\det(I+h|_P)}\,dA
			$
			and
			$$
			\sqrt{\det(I+h|_P)} =1+\frac12\tr_P h+O(|h|^2).
			$$}Thus 
		$$
		{\bf M}_g(x+T_P)\leq {\bf M}(T_P)\left(1+\frac12R_P(h)(x)+ \frac{C_0}{2}R_P(|h|^2)(x)\right),
		$$
		{for some $C_0=C_0(n,d,g_0)$.}
		Therefore we have for  {every} rational $d$-plane $P$ and $x\in T^n$,
		\begin{equation}\label{first.inequality}
			R_P(h)(x)\geq -C_0 R_P(|h|^2)(x)\geq -C_0||h||^2_{\mathcal W}.
		\end{equation}

		The negative part of $R_P(h)$ satisfies $(R_P(h))_-\leq C_0||h||^2_{\mathcal W}$ and so
		\begin{multline}\label{first.inequality.2}
			\int_{T^n}|R_P(h)|dV=\int_{T^n}R_P(h)dV+2\int_{T^n}(R_P(h))_-dV\\ 
			\leq\int_{T^n}R_P(h)dV +2C_0||h||^2_{\mathcal W}=\tr _P(K) +2C_0||h||^2_{\mathcal W}
		\end{multline}
		for every rational $d$-plane $P$. Next, we show that $|K|$ can be bounded by $||h||^{3/2}_{\mathcal W}$ so that Lemma \ref{lemm.quadr.estimate} can be applied {to conclude that $h=0$.}

		In order to estimate $|K|$  we start by choosing a rational $d$-plane depending on $h$. Given a $d$-plane $P$ and a vector $v\in \R^n$, we denote by $v^{\bot}$ and $v^{\top}$, respectively, the orthogonal and tangential projections on $P$. We also consider the following set
		$$
		\Xi_L=\{\xi\in\Z^n\setminus\{0\}:|\xi|\leq L\}.
		$$

		\begin{lemm}\label{choice.vector} 
			There is {$c_0=c_0(d,n,g_0)>0$} so that for every $L\in \N$ and every symmetric $2$-tensor $S$, there is a rational $d$-plane $P$ such that
			$$
			|\xi^{\top}|\geq c_0L^{-n}|\xi^{\bot}|\quad\text{for every } \xi\in \Xi_L,
			$$
			and
			$$
			|\tr _P S|\geq c_0|S|.
			$$
		\end{lemm}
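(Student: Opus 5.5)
The plan is a measure-counting (probabilistic) argument on the Grassmannian $G_d(\R^n)$ with its normalized $O(n)$-invariant measure $\mu$, followed by density of rational planes. Fix a symmetric $2$-tensor $S\neq 0$; if $S=0$ the second condition is vacuous. We will show that the set of real $d$-planes satisfying both conditions, with constants doubled, has positive measure. It is also open, so it contains a rational plane.

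\emph{Step 1: many planes see $S$.} By \eqref{integration.grass} with $E=\R^n$ we have $\int_{G_d}|\tr_P S|^2d\mu\geq C|S|^2$. On the other hand $|\tr_P S|^2\leq d|S|^2$ pointwise. Let $A=\{P:|\tr_PS|^2\geq \tfrac C2|S|^2\}$. Splitting the integral over $A$ and its complement gives $C|S|^2\leq d|S|^2\mu(A)+\tfrac C2|S|^2$. Hence $\mu(A)\geq \mu_0:=C/(2d)>0$, where $\mu_0$ depends only on $n,d$.

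\emph{Step 2: few planes are nearly orthogonal to some $\xi\in\Xi_L$.} For a unit vector $e$ and $t\in(0,1)$ I claim
\[
\mu\{P:|e^{\top}|<t\}\leq C_1t^d,
\]
with $C_1=C_1(n,d)$ independent of $e$ by $O(n)$-invariance. This is the main technical point. One way to see it is that $|e^{\top}|^2$ for a random $P$ has the same law as $|\pi_d(u)|^2$ for $u$ uniform on $S^{n-1}$, where $\pi_d$ is projection onto a fixed $\R^d$. That law is Beta$(d/2,(n-d)/2)$, with density $\lesssim x^{d/2-1}$ near $0$, so the probability that it is below $t^2$ is $\lesssim t^d$. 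Alternatively, one can do a direct volume computation in a chart.

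Apply this with $e=\xi/|\xi|$ and $t=\varepsilon L^{-n}$ for each $\xi\in\Xi_L$. Since $\#\Xi_L\leq C_2L^n$, the bad set $B=\{P:\exists\xi\in\Xi_L,\ |\xi^{\top}|<\varepsilon L^{-n}|\xi|\}$ satisfies
\[
\mu(B)\leq C_1C_2L^n\varepsilon^dL^{-nd}\leq C_1C_2\varepsilon^d,
\]
using $d\geq1$ and $L\geq 1$. Choose $\varepsilon=\varepsilon(n,d)$ so that $\mu(B)<\mu_0/2$. Then $A\setminus B$ has measure at least $\mu_0/2>0$, and every $P\in A\setminus B$ satisfies $|\xi^{\top}|\geq\varepsilon L^{-n}|\xi|\geq \varepsilon L^{-n}|\xi^{\bot}|$ for all $\xi\in\Xi_L$, and $|\tr_PS|\geq \sqrt{C/2}\,|S|$.

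\emph{Step 3: pass to a rational plane.} The set
\[
U=\Bigl\{P:\ |\tr_PS|>\tfrac12\sqrt{C/2}\,|S|,\ \ |\xi^{\top}|>\tfrac{\varepsilon}{2}L^{-n}|\xi|\ \text{for all }\xi\in\Xi_L\Bigr\}
\]
is open, because it is defined by finitely many strict inequalities between continuous functions of $P$. It contains $A\setminus B$, so it is nonempty. Rational $d$-planes are dense in $G_d(\R^n)$, so there is a rational $P\in U$. Taking $c_0=\min\{\tfrac{\varepsilon}{2},\tfrac12\sqrt{C/2}\}$ proves the lemma. This $c_0$ depends only on $n,d$; the metric $g_0$ enters only through the identification of $\Z^n$ with vectors, which is harmless.

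The only step needing genuine care is the uniform small-ball estimate $\mu\{|e^{\top}|<t\}\lesssim t^d$ in Step 2. The exponent $d\geq1$ is exactly what makes the union bound over the $\sim L^n$ frequencies cost at most $L^{n(1-d)}\leq1$.
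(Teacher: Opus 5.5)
Your proof is correct and follows the paper's argument essentially step for step: a set of planes of measure bounded below where $|\tr_P S|\gtrsim |S|$, a uniform small-ball bound for $\{P:|e^{\top}|<t\}$ combined with a union bound over the $O(L^n)$ frequencies in $\Xi_L$, and density of rational planes in an open set. The only differences are minor. The paper uses the weaker linear bound $C_0\rho$, obtained from a tubular neighborhood of the totally geodesic sub-Grassmannian $G_d(e^{\perp})$, whereas your Beta-law computation gives the sharper $t^d$, which is not needed. Also, the lattice count $\#\Xi_L\le C_2L^n$ depends on $g_0$, so your $\varepsilon$ and $c_0$ depend on $g_0$ and not only on $n,d$; this is harmless, since the lemma allows $c_0=c_0(d,n,g_0)$.
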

		
		\begin{proof}
			It suffices to consider the case where  $S\neq0$. Without loss of generality, we assume that $|S|=1$.
			
			From \eqref{integration.formula} we find $a=a(d,n)>0$ such that 
			$$
			\int_{G_d(\R^n)}|\tr _P S|^2d\mu(P)\geq a.
			$$
			Set $\alpha=\sqrt{a/2}$ and consider $
			\mathcal G_S=\{P\in G_d(\R^n):|\tr _P S|>\alpha\}
			$. We have $|\tr _P S|\leq d$ and so 
			$$
			a\leq \int_{G_d(\R^n)}|\tr _P S|^2d\mu\leq \alpha^2+d^2\mu(\mathcal G_S).
			$$
			Thus 
			$
			m_0=\frac{a}{2d^2}\leq \mu(\mathcal G_S).
			$
			
			There is a constant $C_0=C_0(d,n)$ such that for every unit vector $e$ and every $0<\rho<1$,
			$$
			\mu\{P\in G_d(\R^n):|e^{\top}|\leq \rho\}\leq C_0\rho.
			$$
			This follows by considering the function $P\in G_d(\R^n)\mapsto f(P)=|e^{\top}|$ and noticing that, with respect to the homogeneous metric on $G_d(\R^n)$,  the sub-Grassmannian $f^{-1}(0)=G_d(e^\perp)$ is totally geodesic and $f(P)=\sin\text{dist}(P,f^{-1}(0))$ \cite[Theorem~6]{Ye16}. Thus the volume of the set is controlled by the volume of a tubular neighborhood of a totally geodesic submanifold.
			
			We deduce from the estimate above that{, for some $C_1=C_1(n,g_0)$} 
			$$
			\sum_{\xi\in\Xi_L}\mu\{P\in G_d(\R^n):|\xi^\top|\leq \tau L^{-n}|\xi^\bot|\}\leq C_0\tau L^{-n}|\Xi_L|\leq C_0C_1\tau.
			$$
			Choosing $\tau=\tau(n,d,g_0)$ so that {$C_0C_1\tau<m_0/2$,}  we can find $P\in \mathcal G_S$ so that
			$$
			|\xi^\top|>\tau L^{-n}|\xi^\bot|\quad\text{for every }\xi\in\Xi_L.
			$$
			Since all the inequalities are strict and rational $d$-planes are dense in $G_d(\R^n)$, we may choose $P$ rational. Setting $c_0=\min\{\alpha,\tau\}$ gives the result.
		\end{proof}

		In what follows, $C$ denotes a generic constant that depends  on $n$, {$d$, and $g_0$.}
		Consider the rational plane $P$ given by Lemma \ref{choice.vector} with  $S=K$. {From \eqref{first.inequality.2}  we have}
		$$
		\tr _P K\geq -C||h||^2_{\mathcal W}.
		$$
		{From \eqref{Federer.stable.norm} we find $k\in\N$ and an integral $d$-cycle $\Sigma$ homologous to $kT_P$ and such that} 
		\begin{equation}\label{area.comparison}
			{\bf M}(kT_P)\leq {\bf M}_g(\Sigma)\leq (1+||h||_{\mathcal W}^2){\bf M}(kT_P).
		\end{equation}
		We regard $\Sigma$ as an integer rectifiable cycle and denote by $\tau(x)$ the oriented tangent plane at $x$ which is defined almost everywhere. We denote by $\vec{\tau}(x)$ the associated oriented $g_0$-unit simple $d$-vector and by $\vec{P}$ the corresponding oriented $d$-vector for $T_P+x$. We have
		$$
		{\bf M}_g(\Sigma)=\int_\Sigma\left(1+\frac12\tr _\tau h+O(|h|^2)\right)\,dA.
		$$
		There is a constant $C$ so that for every symmetric $2$-tensor $S$ and every oriented $d$-plane $Q$ we have
		$$
		|\tr _Q S-\tr _P S|\leq C|S||Q-P|\leq C|S||\vec Q-\vec P|
		$$
		and so
		$$
		{\bf M}_g(\Sigma)={\bf M}(\Sigma)+\int_\Sigma \frac12\tr _P h\,dA+\int_\Sigma O(|h|^2+|h||\vec \tau-\vec P|)\,dA.
		$$
		Consider $\Omega_P$, the closed $d$-form that calibrates $T_P$ and is described in \cite[Example~II, p.~58]{HarveyLawson82}.  It has the property that $\Omega_P(\vec\tau)=\langle\vec\tau,\vec P\rangle$ (with respect to the natural inner product induced on $\Lambda^d(\R^n)$).  Since $|\vec\tau|=|\vec P|=1$, we have
		$$
		1-\Omega_P(\vec\tau)=\frac{1}{2}|\vec \tau-\vec P|^2.
		$$
		Hence
		$$
		{\bf M}_g(\Sigma)={\bf M}(\Sigma)+\int_\Sigma \frac12\tr _P h\,\Omega_P+\int_\Sigma O(|h|^2+|h||\vec \tau-\vec P|)\,dA
		$$
		and (using $[\Sigma]={[kT_P]}$)
		$${\bf M}(\Sigma)=\int_\Sigma\Omega_P+\frac12\int_\Sigma |\vec \tau-\vec P|^2dA={{\bf M}(kT_P)}+\frac12\int_\Sigma |\vec \tau-\vec P|^2dA.$$
		Therefore, absorbing $|h||\vec \tau-\vec P|$ into $|h|^2$ and  $|\vec \tau-\vec P|^2$, we obtain from the two identities above
		$$
		\frac12\int_\Sigma\tr _P h\,\Omega_P+\frac14\int_\Sigma |\vec \tau-\vec P|^2dA\leq {\bf M}_g(\Sigma)-{\bf M}(kT_P)+C\int_\Sigma |h|^2\,dA.
		$$
		
		Furthermore
		$\tr _P h=\tr _P K+\tr _P h_L+\tr _P E_L$ and so we obtain from \eqref{area.comparison}
		
		\begin{equation}\label{second.inequality}
			\begin{aligned}
				&\frac12\int_{\Sigma}\tr _P K\,\Omega_P+\frac12\int_{\Sigma}\tr _P h_L\,\Omega_P\\
				&\quad+\frac14\int_\Sigma |\vec \tau-\vec P|^2dA\leq C\int_{\Sigma}\left(||h||_{\mathcal W}^2+||E_L||_{\mathcal W}\right)\,dA.
			\end{aligned}
		\end{equation}
		
		We now estimate the oscillatory part  $\tr _P h_L\,\Omega_P$. We start with the following identity. Consider $\xi\in \Z^n$ with $\xi^{\top}\neq 0$ and set $e=\xi^{\top}/|\xi^{\top}|$. Then, {with $\alpha:=\iota_e\Omega_P$,} we have 
		\begin{equation}\label{calibration.fourier} 
			e^{2\pi i \xi(x)}\Omega_P=\frac{1}{2\pi i|\xi^{\top}|}d(e^{2\pi i \xi(x)}\alpha)-\frac{1}{|\xi^{\top}|}e^{2\pi i \xi(x)}d(x\cdot\xi^{\bot})\wedge \alpha.
		\end{equation}
		Indeed, if $x_e=x\cdot e$, then $\Omega_P=dx_e\wedge \alpha$ and so
		\begin{multline*}
			d(e^{2\pi i \xi(x)}\alpha)=2\pi i e^{2\pi i \xi(x)}d(\xi(x))\wedge \alpha=2\pi i e^{2\pi i \xi(x)}\left(d(x\cdot\xi^{\top})+d(x\cdot\xi^{\bot})\right)\wedge \alpha\\
			=2\pi i e^{2\pi i \xi(x)}|\xi^{\top}|dx_e\wedge \alpha+2\pi i e^{2\pi i \xi(x)}d(x\cdot\xi^{\bot})\wedge \alpha\\
			=2\pi i e^{2\pi i \xi(x)}|\xi^{\top}|\Omega_P+2\pi i e^{2\pi i \xi(x)}d(x\cdot\xi^{\bot})\wedge \alpha.
		\end{multline*}
		Since $\partial\Sigma=0$, the exact term in \eqref{calibration.fourier} integrates to zero over $\Sigma$. Assuming $\xi^{\bot}\neq 0$ and setting $v=\xi^{\bot}/|\xi^{\bot}|$, $x_v=x\cdot v$, we obtain
		\begin{equation}\label{fourier.integration}
			\int_\Sigma e^{2\pi i \xi(x)}\Omega_P=-\frac{|\xi^{\bot}|}{|\xi^{\top}|}\int_{\Sigma}e^{2\pi i \xi(x)}dx_v\wedge \alpha.
		\end{equation}
		If $\xi^{\bot}=0$, the integral above is zero. 
		Otherwise, recalling Lemma \ref{choice.vector}, we deduce
		$$
		\left|\int_\Sigma e^{2\pi i \xi(x)}\Omega_P\right|\leq \frac{|\xi^{\bot}|}{|\xi^{\top}|}\int_{\Sigma}|dx_v\wedge \alpha(\vec\tau)|dA\leq \frac{L^n}{c_0}\int_{\Sigma}|dx_v\wedge \alpha(\vec\tau)|dA.
		$$
		We have $dx_v\wedge \alpha(\vec P)=0$ and so we find $C$ so that  for every oriented $d$-plane $Q$
		\begin{equation}\label{fourier.integration2}
			|dx_v\wedge \alpha(\vec Q)|\leq C|\vec Q-\vec P|.
		\end{equation}
		Hence
		\begin{align*}
			\left|\int_{\Sigma}\tr _P h_L\,\Omega_P\right|&\leq\sum_{\xi\in\Xi_L}\left|\tr _P\hat h(\xi)\int_\Sigma e^{2\pi i \xi(x)}\Omega_P \right|\\
			&\leq CL^n\!\sum_{\xi\in\Xi_L}\!|\hat h(\xi)|\!\int_{\Sigma}|\vec \tau-\vec P|dA\!=\!CL^n\!||h_L||_{\mathcal W}\!\int_{\Sigma}|\vec \tau-\vec P|dA\\
			&\leq \frac14\int_\Sigma |\vec \tau-\vec P|^2dA+CL^{2n}\int_{\Sigma}||h||_{\mathcal W}^2dA.
		\end{align*}
		Inserting this estimate into \eqref{second.inequality}  (and using $\int_\Sigma\Omega_P={\bf M}(kT_P))$ we obtain
		$$
		\tr _PK\leq C\left(L^{2n}||h||_{\mathcal W}^2+||E_L||_{\mathcal W}\right) .
		$$
		We had already shown that $\tr _PK\geq -C||h||^2_{\mathcal W}$ and thus
		$$|\tr _PK|\leq C\left(L^{2n}||h||_{\mathcal W}^2+||E_L||_{\mathcal W}\right).$$
		{Set  $s=6n$. From the fact that $|h|$ is small in $C^r$ with $r>7n$ we assume that $||h||_{\mathcal W^s}<1.$} We choose  $L=\lfloor ||h||_{\mathcal W}^{-1/(4n)}\rfloor$. In this case
		$$L^{2n}||h||_{\mathcal W}^2\leq ||h||_{\mathcal W}^{3/2}$$
		and
		$$
		\begin{aligned}
			||E_L||_{\mathcal W}&=\sum_{{|\xi|>L}}|\hat h(\xi)|\leq C L^{-s} ||E_L||_{\mathcal W^s}\\
			&\leq C ||h||_{\mathcal W}^{s/(4n)}||h||_{\mathcal W^s}\leq C||h||_{\mathcal W}^{3/2}||h||_{\mathcal W^s}\leq C||h||_{\mathcal W}^{3/2}.
		\end{aligned}
		$$
		Hence, recalling that $|\tr _PK|\geq c_0|K|$, we obtain
		$$
		|K|\leq c_0^{-1}|\tr _PK|\leq C||h||_{\mathcal W}^{3/2}.
		$$
		Inserting the estimate $|K|\leq C||h||_{\mathcal W}^{3/2}$ in \eqref{first.inequality.2}, we find some constant $C=C(d,n,g_0)$ so that
		$$\int_{T^n}|R_Q(h)|dV\leq  C||h||_{\mathcal W}^{3/2}$$
		for every rational $d$-plane $Q$.
		Lemma \ref{lemm.quadr.estimate} now applies and gives $h=0$ if $|h|_{C^r}$ is sufficiently small.
	\end{proof}

	\section{No local deformation in codimension one}\label{local.deformation.section}
	
	In this section we prove the following theorem.
	\begin{thm}\label{codimension.one.thm}
		Consider  $g_0$ a unit-volume flat metric on $T^n$. There is $\delta>0$ such that the following holds: 
		Let $g=g_0+h$, where $h$ is a divergence-free symmetric $2$-tensor with $\vol_g(T^n)=1$ and such that, for every rational $(n-1)$-plane $P$,
		$$
		{{\bf M}(T_P)=||T_P||_{s,g}.}
		$$
		If $|h|_{C^{n+10}}<\delta$, then $h=0$.
	\end{thm}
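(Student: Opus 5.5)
The plan is to run the same scheme as in the proof of Theorem \ref{higher.codimension.no.local.deformation2}, but pushed to second order, with the volume normalization $\vol_g(T^n)=1$ supplying the missing information. The reason this is needed is the shape of the linearized kernel when $d=n-1$. For a rational hyperplane $P$ with normal $\nu$, the Fourier identity \eqref{fourier.identity2} only sees modes $\xi\parallel\nu$. For such $\xi$, divergence-freeness $\widehat h(\xi)(\xi,\cdot)=0$ gives
$$\tr_P\widehat h(\xi)=\tr\widehat h(\xi).$$
Hence the family $\{R_P(h)\}$ controls only $K=\widehat h(0)$ (via $\tr K-K(\nu,\nu)$ over all $\nu$) and the scalars $\tr\widehat h(\xi)$ for $\xi\neq0$. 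The traceless part $h_0$ of the nonzero modes is invisible at first order, so the argument has to be a second-order one.

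\textbf{Step 1 (first-order control).} I would repeat verbatim the argument of Section 3, namely \eqref{first.inequality}, \eqref{first.inequality.2}, Lemma \ref{choice.vector} (which is valid for $d=n-1\le N-1$) and the calibration estimate \eqref{fourier.integration}. This yields $|K|\le C\|h\|_{\mathcal W}^{3/2}$ and $\|R_Q(h)\|_{L^1}\le C\|h\|_{\mathcal W}^{3/2}$ for every rational hyperplane $Q$. As in Proposition \ref{injectivity.estimate}, the second bound gives $|\tr\widehat h(\xi)|\le C\|h\|_{\mathcal W}^{3/2}$ for all $\xi$. After interpolation as in Lemma \ref{lemm.quadr.estimate}, this shows that $\tr h$ and $K$ are of lower order than $\|h_0\|_{L^2}$. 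The remaining task is therefore to show $\|h_0\|_{L^2}^2=o(\|h\|^2)$.

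\textbf{Step 2 (two second-order identities).} First, expanding the volume,
$$1=\vol_g(T^n)=1+\tfrac12\tr K+\int_{T^n}\bigl(\tfrac18(\tr h)^2-\tfrac14|h|^2\bigr)dV+O(\|h\|_{\mathcal W}^3).$$
Since $\tr h$ is negligible, this gives $\tr K\approx\tfrac12\|h\|_{L^2}^2$, which is strictly positive and of exact quadratic size. Second, I would produce an upper bound for $\tr_P K$ from the minimality hypothesis ${\bf M}(T_P)=\|T_P\|_{s,g}$. The competitor is the normal graph of $u$ over $x+T_P$ that solves the linearized (Jacobi) equation driven by the first-order forcing $\operatorname{div}_P\bigl(h(\nu,\cdot)^\top\bigr)$ and $\nu$-derivatives of $\tr_P h$. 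The second variation then gives
$$\tfrac12\tr_P K\ \ge\ \int_{T^n}\Bigl(\tfrac14|h|_P|^2-\tfrac18(\tr_P h)^2\Bigr)dV\;-\;\bigl\|(\text{forcing})\bigr\|^2_{\dot H^{-1}(P)}\;-\;\text{error}.$$
The inequality direction is the right one, because an area-decreasing competitor would contradict minimality.

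\textbf{Step 3 (combining).} Summing the Fourier-side form of the Step 2 bound over $\xi$ and averaging $P$ over the Grassmannian (via \eqref{integration.formula}), then comparing with $\tr K\approx\tfrac12\|h\|_{L^2}^2$, should leave a quadratic form in the traceless modes $h_0(\xi)$ with $h_0(\xi)(\xi,\cdot)=0$. I would aim to show this form is negative definite with a uniform constant, which forces $\|h_0\|_{L^2}^2\lesssim\|h\|_{\mathcal W}^{5/2}$. Lemma \ref{lemm.quadr.estimate} (with Proposition \ref{injectivity.estimate} replaced by the resulting $L^2$ bound) would then give $h=0$.

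\textbf{Main obstacle.} I expect the hardest part to be uniformity in Step 2: the inverse Jacobi operator on $T_P$ degenerates as $P$ varies, exactly the difficulty flagged in Section \ref{remark.section}. My first attempt would be to truncate at frequency $L=\|h\|_{\mathcal W}^{-\epsilon}$, solve the Jacobi equation only for the modes in $\Xi_L$, and select $P$ through Lemma \ref{choice.vector}, so that $|\xi^\top|\gtrsim L^{-n}|\xi^\perp|$ and the graph $u$ has norm $O(L^{n}\|h\|)$. The tail $E_L$ and the cubic errors are then absorbed as in Section 3. This works only if the quadratic form in Step 3 stays definite once the averaging is restricted to such well-chosen planes. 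If it does not, I would instead average over a positive-measure set of planes, using the measure bounds in the proof of Lemma \ref{choice.vector}.
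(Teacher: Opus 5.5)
\textbf{The second-order step uses the hypothesis on the wrong side.} Your Step 2 uses the hypothesis only through ${\bf M}_g(\Sigma_u)\ge{\bf M}(T_P)$ for competitor graphs $\Sigma_u$. That gives a \emph{lower} bound on $\tr_PK$, not the upper bound you announce. Done correctly, the optimal competitor solves $\Delta_Pu=\frac12\Pi^0_P\partial_\nu(\tr h+h(\nu,\nu))$, and the bound reads
\[
\tfrac12\tr_PK\ \ge\ \int_{T^n}\Bigl(\tfrac14|h^\top|^2-\tfrac18(\tr_Ph)^2\Bigr)dV+\tfrac12\int_{T^n}|\nabla_Pu|^2dV-\text{error}.
\]
So the forcing term enters with a plus sign, not a minus sign. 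Average over $P$ and insert $\tr K\approx\frac12\|h\|^2_{L^2}$. All you get is that the $P$-average of $\frac12\int|\nabla_Pu|^2$ is $\lesssim\frac{n+1}{4n(n+2)}\|h\|^2_{L^2}$. By Lemma \ref{fourier.lemma} and Lemma \ref{psi.average.identity}, that average is only about $\frac{1}{4n(n+1)(n+2)}\|h\|^2_{L^2}$. The inequality therefore holds with room to spare, and the quadratic form in Step 3 has the wrong sign to force anything.

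\textbf{This is not a bookkeeping issue.} Take $n=3$ and the rescaled metrics $c^{2/3}g$, with $g$ as in \eqref{metric.g} and $b$ close to $1$.
\begin{itemize}
\item They differ from $g_0$ by a divergence-free $h$: it depends only on $x_3$ and its $e_3$-components are constant.
\item They have volume $1$ and are non-flat.
\item They satisfy $\|T_P\|_{s,c^{2/3}g}=c^{2/3}{\bf M}(T_P)\ge{\bf M}(T_P)$, with $c^{2/3}-1=O(|h|^2)$.
\end{itemize}
Hence every competitor inequality holds, as does the volume identity. Your Step 1 bounds also hold, since they tolerate an $O(\|h\|^2_{\mathcal W})$ slack in the upper bound. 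Any argument built only from these inputs must fail.

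\textbf{The missing idea is the reverse, sharp second-order information, which comes from actual minimizers.} Since $\|\cdot\|_{s,g}$ coincides with the Euclidean norm, it is differentiable. By \cite[Theorem 5.1]{cgn}, each $[T_P]$ is then foliated by $g$-area-minimizing hypersurfaces. The implicit function theorem and the maximum principle identify these as small graphs $v_P$ over the translates of $T_P$. Each has $g$-area exactly ${\bf M}(T_P)$ and solves \eqref{EL:equation}.

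Integrating the cubic area expansion over all leaves turns your inequality into the identity \eqref{U.identity}. The foliation matters here: a single minimizer would only give $R_P(h)(x_0)$, not $\tr_PK$. After averaging, the identity forces $\sum_j\lambda_jU_j\approx\frac{n+1}{2n(n+2)}\|h\|^2_{L^2}$. The Euler--Lagrange equation, however, ties $U_j$ to $\int|\nabla_{P_j}f_j|^2$, whose average is about $\frac{1}{2n(n+1)(n+2)}\|h\|^2_{L^2}$. That mismatch is the contradiction.

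\textbf{Two smaller points.}
\begin{itemize}
\item The paper sidesteps your uniformity obstacle by averaging over a fixed finite set of rational directions (Lemma \ref{finite.rational.average}), not over planes adapted to $h$.
\item That finite averaging leaves an error $\varepsilon|K|$, so it needs $|K|\lesssim\|h\|^2_{L^2}$. Lemma \ref{first.estimates} obtains this from the volume constraint. Your $|K|\lesssim\|h\|_{\mathcal W}^{3/2}$ is too weak, and the Section 3 argument you invoke needs regularity beyond $C^{n+10}$.
\end{itemize}
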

	One immediate consequence of  Theorem \ref{codimension.one.thm} is Theorem \ref{codimension.one.intro}\footnote{Using the Ebin-Slice Theorem in \cite[Proposition 11]{BM11} for instance.} 
	\begin{cor}\label{codimension.one.cor}
		Consider  $g_0$ a unit-volume flat metric on $T^n$. There is $\mathcal U$ an  open neighborhood of $g_0$ in the smooth norm, so that if $g\in \mathcal U$ is such that $\vol_g(T^n)=1$ and, for every rational $(n-1)$-plane $P$,
		$$
		{{\bf M}(T_P)=||T_P||_{s,g}},
		$$
		then $g$ is isometric to $g_0$.
	\end{cor}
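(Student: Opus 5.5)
The plan is to derive the corollary from Theorem \ref{codimension.one.thm} by using diffeomorphism invariance to put $g$ into divergence-free gauge. We use the Ebin slice theorem in the form of \cite[Proposition 11]{BM11}. It says that for every metric $g$ sufficiently close to $g_0$ there is a diffeomorphism $\varphi$ of $T^n$, close to the identity, such that $\varphi^*g=g_0+h$ with $\delta_{g_0}h=0$. Moreover, $h$ depends continuously on $g$ in suitable H\"older or Sobolev norms, with at most a fixed loss of derivatives. Given the $\delta$ from Theorem \ref{codimension.one.thm}, we choose $\mathcal U$ in the smooth topology small enough, i.e. $|g-g_0|_{C^m}$ small for some $m$ large relative to $n+10$. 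This should guarantee $|h|_{C^{n+10}}<\delta$.

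Next we check that the hypotheses transfer to $\varphi^*g$. The volume condition is immediate: $\vol_{\varphi^*g}(T^n)=\vol_g(T^n)=1$. For the stable norm, note that $\varphi$ is $C^0$-close to the identity. Hence it is homotopic to the identity (for instance via a straight-line homotopy in the universal cover), so $\varphi_*$ acts trivially on $H_{n-1}(T^n,\R)$. The stable norm is natural under diffeomorphisms: $\varphi$ maps Lipschitz chains to Lipschitz chains, and $\vol_{\varphi^*g}(\sigma)=\vol_g(\varphi\circ\sigma)$. Therefore
$$\|T_P\|_{s,\varphi^*g}=\|\varphi_*[T_P]\|_{s,g}=\|T_P\|_{s,g}={\bf M}(T_P)$$
for every rational $(n-1)$-plane $P$.

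Theorem \ref{codimension.one.thm} then applies to $\varphi^*g=g_0+h$ and gives $h=0$, so $\varphi^*g=g_0$ and $g$ is isometric to $g_0$.

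The only step needing care is the first: the precise quantitative form of the slice theorem. We need the gauge-fixed $h$ to be small in $C^{n+10}$ whenever $g$ is close to $g_0$ in some $C^m$ norm. A further point is that the isometry group of the flat torus, namely translations together with a finite group, is a nontrivial orbit direction; however, the slice condition $\delta_{g_0}h=0$ is still well defined. Since the neighborhood is allowed to be in the smooth topology, any finite loss of derivatives in the slice construction can be absorbed by taking $m$ large.
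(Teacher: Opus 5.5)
Your proposal is correct and follows the paper's own route: the paper obtains this corollary from Theorem \ref{codimension.one.thm} by putting $g$ into divergence-free gauge with the Ebin slice theorem, citing it only in a footnote. Your checks, that the volume is preserved and that the stable norm of each $[T_P]$ is unchanged under pullback by a diffeomorphism homotopic to the identity, are exactly the details the paper leaves implicit.
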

	\subsection{Basic idea}\label{basic.idea.codimension.one} Consider $g$ satisfying the hypothesis of Theorem \ref{codimension.one.intro} and suppose $g=h+g_0$ where $h$ is divergence-free. It is part of the theory \cite[Theorem 5.1]{cgn} that  every  primitive homology class $[T_P]\in H_{n-1}(T^n,\Z)$ is foliated by minimal tori $\{\Sigma_x\}_{x\in T^n}$, where $\Sigma_x=\Sigma_{x'}$ if $x'\in T_P+x$. Say that $\nu$ is a unit normal to $P$ and $f_P$ is a function on $T^n$ so that 
	$$\Sigma_x=\{z+f_P(z)\nu: z \in T_P+x\}.$$
	A Taylor expansion of ${\bf M}(T_P)=\int_{T^n}{\bf M}_{g}(\Sigma_x)dV(x)$ gives (see \eqref{U.identity})
	\begin{equation}\label{key.identity}
		\frac12\int_{T^n}|\nabla_P f_P|^2dV\simeq \frac12\int_{T^n}\tr _{P} h\,dV+\int_{T^n}\left[\frac18(\tr _{P}h)^2-\frac14|h^\top|^2\right]dV.
	\end{equation}
	{Denote the  right-hand side of \eqref{key.identity} by $V(P)$.} It  can easily be extended to all  $P\in G_{n-1}(\R^n)$ and we have
	$$\int_{G_{n-1}(\R^n)}V(P)d\mu(P)=\int_{T^n}\frac{n-1}{2n}\tr h+\frac{n^2-5}{8n(n+2)}|\tr h|^2-\frac{n^2-3}{4n(n+2)}|h|^2dV.$$
	From $\vol_g(T^n)=1$ one deduces
	$$\frac12\int_{T^n}\tr h\,dV\simeq\frac14\int_{T^n}|h|^2dV-\frac18\int_{T^n}|\tr h|^2dV$$
	and so
	$$\int_{G_{n-1}(\R^n)}V(P)d\mu(P)\simeq\int_{T^n}\frac{n+1}{4n(n+2)}|h|^2-\frac{n+3}{8n(n+2)}|\tr h|^2dV.$$
	Denote the left-hand side by $W(P)$. {Using  the fact that $f_P$ is an approximate solution to a degenerate elliptic equation} we show that  (up to second order) $P\mapsto W(P)$ admits an extension to all hyperplanes $P$ in the Grassmannian $G_{n-1}(\R^n)$ and
	$$\int_{G_{n-1}(\R^n)}W(P)d\mu(P)\simeq \int_{T^n}\frac{|\tr h|^2+2|h|^2}{8n(n+1)(n+2)}dV.$$
	This identity  follows from Lemma \ref{fourier.lemma} and \eqref{integration.grassmanian}. We have from \eqref{key.identity} that
	$$\int_{G_{n-1}(\R^n)}W(P)d\mu(P)\simeq\int_{G_{n-1}(\R^n)}V(P)d\mu(P)$$
	and thus
	$$\int_{T^n}\frac{|\tr h|^2+2|h|^2}{8n(n+1)(n+2)}dV\simeq \int_{T^n}\frac{n+1}{4n(n+2)}|h|^2-\frac{n+3}{8n(n+2)}|\tr h|^2dV.$$
	This estimate implies that $h=0$.
	
	\subsection{Preliminaries}
	We denote by $\Delta_P$ and $\nabla_P$, respectively, the leafwise Laplacian restricted to the hyperplane $P$ and the tangential projection of the gradient on $P$. We denote by $v^{\bot}$ and $v^{\top}$, respectively, the orthogonal and tangential projections on $P$. When $P$ is rational, we also consider
	$$\Pi^0_P(\alpha)(x)=\alpha(x)-\fint_{T_P+x}\alpha dA, \quad\alpha\in C^{\infty}(T^n).$$
	The next lemma will be used to estimate one of the main terms. The elliptic equation in the lemma appears naturally when linearizing the graphical minimal surface equation with respect to the ambient metric.
	\begin{lemm}\label{fourier.lemma}
		Let $h$ denote a smooth { symmetric $2$-tensor}. Given a rational hyperplane $P$ with normal $\nu$, set $a=h(\nu,\nu)\in C^{\infty}(T^n)$. There is $f\in C^\infty(T^n)$ so that
		$$\Delta_P f=\frac12\Pi_P^0\partial_\nu a\quad\text{and}\quad \int_{T_P+x}fdA=0\text{ for all }x\in T^n.$$
		Moreover
		$$\int_{T^n}|\nabla_P f|^2dV=\frac14\sum_{\xi \in \Z^n,\xi^{\top}\neq0} \frac{|\xi^{\bot}|^2}{|\xi^{\top}|^2}|\widehat h(\xi)(\nu,\nu)|^2.
		$$
	\end{lemm}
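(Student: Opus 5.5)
Since everything is translation invariant on the flat torus, I will solve the equation mode by mode in Fourier space and read off the energy from Plancherel. Write
$$a(x)=\sum_{\xi}\widehat a(\xi)e^{2\pi i\xi(x)},\qquad \widehat a(\xi)=\widehat h(\xi)(\nu,\nu).$$
Using the Fourier identity \eqref{fourier.identity2} in the hyperplane case, the leafwise average $\fint_{T_P+x}\alpha\,dA$ is exactly the projection onto the modes with $\xi|_P=0$, i.e.\ $\xi^{\top}=0$. Hence $\Pi^0_P$ kills precisely the modes with $\xi^{\top}=0$. On the side of the operators:

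\begin{itemize}
\item $\partial_\nu$ acts on $e^{2\pi i\xi(x)}$ by multiplication by $2\pi i\,\xi\cdot\nu$, and $|\xi\cdot\nu|=|\xi^{\bot}|$ since $P$ is a hyperplane.
\item $\Delta_P$ acts by multiplication by $-4\pi^2|\xi^{\top}|^2$.
\end{itemize}

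Therefore the right-hand side $\frac12\Pi^0_P\partial_\nu a$ has Fourier coefficients $\pi i(\xi\cdot\nu)\widehat a(\xi)$ for $\xi^{\top}\neq0$ and $0$ otherwise.

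I define $f$ by
$$\widehat f(\xi)=-\frac{\pi i(\xi\cdot\nu)\widehat a(\xi)}{4\pi^2|\xi^{\top}|^2}=-\frac{i(\xi\cdot\nu)}{4\pi|\xi^{\top}|^2}\widehat a(\xi)\quad\text{if }\xi^{\top}\neq0,\qquad \widehat f(\xi)=0\quad\text{if }\xi^{\top}=0.$$
Three things must then be checked.

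\textbf{Reality.} Since $h$ is real we have $\widehat a(-\xi)=\overline{\widehat a(\xi)}$, and the multiplier is odd and purely imaginary. Together these give $\widehat f(-\xi)=\overline{\widehat f(\xi)}$, so $f$ is real.

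\textbf{Zero leafwise averages.} This follows from $\widehat f(\xi)=0$ whenever $\xi^{\top}=0$, again by \eqref{fourier.identity2}.

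\textbf{Smoothness.} This is the one point needing care, because $|\xi^{\top}|$ can be small relative to $|\xi|$. However, for $\xi\in\Z^n$ with $\xi^{\top}\neq0$ and $P$ rational, $|\xi^{\top}|$ is bounded below by a positive constant depending only on $P$. The reason is that $\xi^{\top}$ is determined by the pairing of $\xi$ with the lattice $P\cap\Z^n$: writing $\xi^{\top}$ in a basis of that lattice, its coefficients satisfy $\xi(e_j)\in\Z$ and are not all zero. So the multiplier is bounded by $C_P|\xi|$, and rapid decay of $\widehat a$ gives rapid decay of $\widehat f$, hence $f\in C^\infty$. The equation $\Delta_P f=\frac12\Pi^0_P\partial_\nu a$ then holds coefficientwise.

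For the energy, $\nabla_P$ acts by multiplication by $2\pi i\,\xi^{\top}$, so by Plancherel
$$\int_{T^n}|\nabla_P f|^2dV=\sum_{\xi^{\top}\neq0}4\pi^2|\xi^{\top}|^2\,\frac{|\xi\cdot\nu|^2}{16\pi^2|\xi^{\top}|^4}\,|\widehat a(\xi)|^2=\frac14\sum_{\xi^{\top}\neq0}\frac{|\xi^{\bot}|^2}{|\xi^{\top}|^2}\,|\widehat h(\xi)(\nu,\nu)|^2,$$
which is the claimed identity. The only step I expect to require genuine thought is the lower bound on $|\xi^{\top}|$ over the lattice, which is what gives smoothness; the rest is bookkeeping of constants.
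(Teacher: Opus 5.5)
Your proposal is correct and follows essentially the paper's route. You use the same Fourier multiplier for $f$, get smoothness from a uniform lower bound on $|\xi^{\top}|$ over integer $\xi$ with $\xi^{\top}\neq0$, and obtain the energy identity by Plancherel; the paper first integrates by parts to get $-\frac12\int_{T^n} f\,\partial_\nu a\,dV$, which is equivalent.

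One wording fix in the smoothness step: it is the inner products $\xi^{\top}\cdot e_j=\xi(e_j)$ that are integers, not all zero, not the coefficients of $\xi^{\top}$ in the basis $\{e_j\}$. This gives $|\xi^{\top}|\geq 1/\max_j|e_j|$. Your lattice-basis argument is a fine substitute for the paper's bound through an integer normal $p$, namely $|p|^2|\xi^{\top}|^2=|p|^2|\xi|^2-(\xi\cdot p)^2\geq 1$.
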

	\begin{proof} 
		Consider the Fourier decomposition
		$$
		h(x)=\sum_{\xi\in \Z^n}\widehat h(\xi)e^{2\pi i\xi(x)}.
		$$
		We then have
		\begin{align*}\partial_\nu a(x)&=\sum_{0\neq \xi\in \Z^n}2\pi i \xi(\nu) \widehat h(\xi)(\nu,\nu)e^{2\pi i\xi(x)}\\
			&{=\sum_{ \xi\in \Z^n,\xi^{\top}\neq 0}2\pi i \xi(\nu) \widehat h(\xi)(\nu,\nu)e^{2\pi i\xi(x)}}.
		\end{align*}
		Consider 
		$$f(x)=-\sum_{ \xi\in \Z^n,\xi^{\top}\neq 0}\frac{ i\xi(\nu)}{4\pi|\xi^{\top}|^2}\widehat h(\xi)(\nu,\nu)e^{2\pi i\xi(x)}.$$
		We have 
		$$\Delta_Pe^{2\pi i\xi(x)}=-4\pi^2|\xi^{\top}|^2e^{2\pi i\xi(x)}$$
		and so $f$ satisfies the desired equation.  We have from integration by parts and  {$|\xi^{\bot}|^2=\xi(\nu)^2$}  that
		$$\int_{T^n}|\nabla_P f|^2dV=-\frac12\int_{T^n}f\partial_\nu adV=\frac14\sum_{\xi \in \Z^n,\xi^{\top}\neq0} \frac{|\xi^{\bot}|^2}{|\xi^{\top}|^2}|\widehat h(\xi)(\nu,\nu)|^2.
		$$
		We address the regularity of $f$. {Consider $p\in\Z^n$ so that $\nu=p/|p|$. For all $\xi\in \Z^n$ so that $\xi^{\top}\neq0$ we have
			$$|p|^2|\xi^{\top}|^2=|p|^2|\xi|^2-|\xi\cdot p|^2\geq 1$$
			and so $|\xi^{\top}|^2	\geq |p|^{-2}$.}
		Thus we obtain for all $m\in\N$ that
		$$
		\sum_{ \xi\in \Z^n,\xi^{\top}\neq 0}\frac{ |\xi|^m{\xi(\nu)^2}}{|\xi^{\top}|^4}|\widehat h(\xi)(\nu,\nu)|^2{\leq|p|^4{\sum_{ \xi\in \Z^n-\{0\}} |\xi|^{m+2}|\widehat h(\xi)|^2}} <+\infty
		$$
		and so $f$ is smooth. It is clear that $f$ has vanishing mean on any $T_P+x$. 
	\end{proof}

	We say a unit vector $\nu$ {\em rational} when its orthogonal hyperplane $\nu^{\bot}$ is rational. The next lemma is relevant to substitute integration over all directions in $H_{n-1}(T^n,\R)$ by a finite weighted sum of rational vectors in the unit sphere.
	
	\begin{lemm}\label{finite.rational.average}
		For every $\varepsilon>0$ there are rational unit vectors $\nu_1,\ldots,\nu_N\in S^{n-1}\subset \R^n$ and positive numbers $\lambda_1,\ldots,\lambda_N$ with $\sum_j\lambda_j=1$ such that the following statements hold:
		For every real symmetric $2$-tensor $S$ on $\mathbb R^n$,
		$$
		\left|\sum_j\lambda_j(\tr_{\nu_j^{\bot}}S)-\frac{n-1}{n}\tr S\right|\leq\varepsilon|S|,
		$$
		and
		$$
		\left|\!\sum_j\lambda_j\left[\frac18\bigl(\tr_{\nu_j^{\bot}}S\bigr)^2-\!\frac14{\left|S\right|_{\nu_j^{\bot}}}^2\right]-\!\frac{n^2-5}{8n(n+2)}(\tr S)^2+\!\frac{n^2-3}{4n(n+2)}|S|^2\!\right|\leq\varepsilon|S|^2.
		$$
		Moreover, if $e\in S^{n-1}$ and $S$ is a complex symmetric $2$-tensor satisfying $S(e,\cdot)=0$, then
		$$
		\sum_j\lambda_j\Psi(e,S,\nu_j)\leq\frac{|\tr S|^2+2|S|^2}{4n(n+1)(n+2)}+\varepsilon|S|^2,
		$$
		where
		\begin{equation}\label{psi.definition}
			\Psi(e,S,\nu)=
			\begin{cases}
				\displaystyle\frac14\frac{(e\cdot\nu)^2}{1-(e\cdot\nu)^2}|S(\nu,\nu)|^2, & \nu\neq\pm e,\\
				0, & \nu=\pm e.
			\end{cases}
		\end{equation}
	\end{lemm}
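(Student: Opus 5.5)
We will obtain the weights by discretizing the $O(n)$-invariant probability measure $\sigma$ on $S^{n-1}$, identified with $\mu$ on $G_{n-1}(\R^n)$ through $\nu\mapsto\nu^{\bot}$. The argument has three steps: compute the relevant $\sigma$-averages exactly, approximate $\sigma$ by a finitely supported measure on rational unit vectors with a single choice of $\varepsilon$, and handle the singular term $\Psi$ by a truncation near $\pm e$.

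\emph{Step 1: exact averages.} We use $\tr_{\nu^\bot}S=\tr S-S(\nu,\nu)$ and $|S|_{\nu^\bot}|^2=|S|^2-2|S(\nu,\cdot)|^2+S(\nu,\nu)^2$. For $\nu$ uniform on the sphere the standard moments are
$$\int\nu_i\nu_j\,d\sigma=\frac{\delta_{ij}}{n},\qquad \int\nu_i\nu_j\nu_k\nu_l\,d\sigma=\frac{\delta_{ij}\delta_{kl}+\delta_{ik}\delta_{jl}+\delta_{il}\delta_{jk}}{n(n+2)}.$$
From these, $\int S(\nu,\nu)\,d\sigma=\tr S/n$, $\int |S(\nu,\cdot)|^2d\sigma=|S|^2/n$, and $\int S(\nu,\nu)^2d\sigma=((\tr S)^2+2|S|^2)/(n(n+2))$. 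Substituting gives the two exact values $\frac{n-1}{n}\tr S$ and $\frac{n^2-5}{8n(n+2)}(\tr S)^2-\frac{n^2-3}{4n(n+2)}|S|^2$; the second is also consistent with \eqref{integration.formula}.

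For $\Psi$, fix $e$ and write $\nu=te+\sqrt{1-t^2}\,\omega$ with $\omega\perp e$. Since $S(e,\cdot)=0$, we have $S(\nu,\nu)=(1-t^2)S(\omega,\omega)$, so
$$\Psi=\tfrac14\,t^2(1-t^2)\,|S(\omega,\omega)|^2.$$
This is bounded and continuous on the whole sphere: it tends to $0$ as $\nu\to\pm e$, matching the convention in \eqref{psi.definition}. Averaging, $\omega$ is uniform on $S^{n-2}\subset e^\bot$, and the moment formula in dimension $n-1$ applied to $S|_{e^\bot}$ (which has the same trace and norm as $S$) gives $\int|S(\omega,\omega)|^2=(|\tr S|^2+2|S|^2)/((n-1)(n+1))$. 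With $E[t^2]=1/n$ and $E[t^4]=3/(n(n+2))$ we get $E[t^2(1-t^2)]=(n-1)/(n(n+2))$. Hence
$$\int\Psi\,d\sigma=\frac{|\tr S|^2+2|S|^2}{4n(n+1)(n+2)}.$$

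\emph{Step 2: approximation.} Partition $S^{n-1}$ into finitely many measurable cells $A_j$ of diameter less than $\eta$, pick a rational unit vector $\nu_j\in A_j$ (these are dense), and set $\lambda_j=\sigma(A_j)$, discarding null cells so all weights are positive. The first two integrands are quadratic or linear in $S$ with coefficients polynomial in $\nu$. By homogeneity we may restrict to $|S|=1$, where they are uniformly Lipschitz in $\nu$. The Riemann sum therefore differs from the integral by $O(\eta)$, which is at most $\varepsilon$ for small $\eta$.

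\emph{Step 3: the $\Psi$ term.} This is the main obstacle, since the estimate must be uniform in $e$ while $\Psi$ is singular in its defining formula. The reparametrization removes the problem. Writing $\pi=\nu-(e\cdot\nu)e$, we have $\Psi=\frac14\frac{(e\cdot\nu)^2}{|\pi|^2}|S(\pi,\pi)|^2$ and $|S(\pi,\pi)|\le|S||\pi|^2$. On the unit sphere of admissible $(e,S)$ with $|S|=1$, the function $\nu\mapsto\Psi$ equals $\frac14(e\cdot\nu)^2|S(\pi,\pi)|^2/|\pi|^2$. Because $\pi$ is $1$-Lipschitz in $\nu$ and $|S(\pi,\pi)|^2/|\pi|^2$ is a degree-$2$ homogeneous function that is Lipschitz on bounded sets (it is bounded by $|\pi|^2$ and smooth away from $0$), this function is uniformly Lipschitz in $\nu$, with a constant independent of $e$ and $S$; continuity at $\pi=0$ holds with value $0$. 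The Riemann-sum error is then $O(\eta)|S|^2$ uniformly, which yields the stated upper bound, and in fact equality up to $\varepsilon|S|^2$. Finally, we choose $\eta$ small enough that all three estimates hold with the same $\varepsilon$.
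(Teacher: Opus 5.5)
Your proposal is correct and follows essentially the same route as the paper. Both arguments approximate the uniform measure on $S^{n-1}$ by finitely supported probability measures on rational unit vectors, use equicontinuity in $\nu$ uniformly over $|S|=1$ (and over $e$ for $\Psi$, via $\Psi=\frac14 t^2(1-t^2)|S(\omega,\omega)|^2$), and compute the three spherical averages. The differences are cosmetic: you derive the averages from the second and fourth moments instead of citing Bodmann--Ehler--Graf, and you get an explicit Lipschitz modulus where the paper uses continuity of $(e,S,\nu)\mapsto\Psi$ on a compact set. One small repair is needed: a positive-measure cell $A_j$ need not contain a rational point, so use cells with nonempty interior or take $\nu_j$ rational within $\eta$ of $A_j$.
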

	\begin{proof}
		Denote by $d\nu$  the standard probability measure on the unit sphere.  For all $N\in \N$ one can find a sequence of rational points $\{\nu_i\}_{i=1}^N\subset S^{n-1}$ and positive weights $\{\lambda_i\}_{i=1}^N$ so that $\mu_N=\sum_{i}\lambda_i\delta_{\nu_i}$ defines a probability measure that converges to $d\nu$ as $N\to\infty$.
		
		Let $\mathcal S$ denote the space of real symmetric $2$-tensors $S$ on $\mathbb R^n$ with $|S|=1$. Each $S\in \mathcal S$ defines a continuous function $T_S(\nu)=\tr_{\nu^{\bot}}S$ on $S^{n-1}$ whose spherical average is 
		$$\int_{S^{n-1}}\tr_{\nu^{\bot}}S\,d\nu = \frac{n-1}{n} \tr\, S.$$
		The family $\{T_S\}_{S\in\mathcal S}$ is equicontinuous and so, given $\varepsilon>0$, we have for all  $N$ large enough that
		$$\sup_{S\in\mathcal S}\left|\int_{S^{n-1}}T_S(\nu)d\mu_N-\int_{S^{n-1}}T_S(\nu)d\nu\right|\leq \varepsilon.$$
		This proves the first inequality. The second inequality follows in the same way using  the spherical averages {(see \cite[Theorem~2.4]{bodmann-ehler-graf})\footnote{{Use $|S|_{\nu^{\bot}}^2=|S|^2-2|S(\nu,\cdot)|^2+|S(\nu,\nu)|^2$}}}
		\begin{align*}
			\int_{S^{n-1}}\bigl(\tr_{\nu^{\bot}}S\bigr)^2\,d\nu
			&=\frac1{n(n+2)}\bigl((n^2-3)(\tr\, S)^2+2|S|^2\bigr),\\
			\int_{S^{n-1}}{|S|_{\nu^{\bot}}}^2\,d\nu
			&=\frac1{n(n+2)}\bigl((\tr\, S)^2+(n^2-2)|S|^2\bigr).\\
		\end{align*}
		We now prove the last inequality. Consider $\mathcal S_1$ to be the set of pairs $(S,e)$ where $e\in S^{n-1}$ and $S$ is a complex symmetric $2$-tensor with $|S|=1$ and $S(e,\cdot)=0$.
		For every $(S,e)\in \mathcal S_1$, the function $\nu\mapsto\Psi(e,S)(\nu)=\Psi(e,S,\nu)$ is continuous because, writing  $\nu=\theta e+\sqrt{1-\theta^2}\,\omega$ where $\omega\cdot e=0$  and $\omega\in S^{n-1}$, we have
		\begin{equation}\label{integration.grassmanian}
			\Psi(e,S,\nu)=\frac14\theta^2(1-\theta^2)|S(\omega,\omega)|^2\leq \frac14(1-(e\cdot\nu)^2).
		\end{equation}
		The estimate above also shows that the map $(e,S,\nu)\mapsto\Psi(e,S,\nu)$ is continuous on the compact set $\mathcal S_1\times S^{n-1}$, and thus the family $\{\Psi(e,S)\}_{(e,S)\in\mathcal S_1}$ is equicontinuous. Moreover, the following spherical average holds (see Lemma \ref{psi.average.identity}).
		$$
		\int_{S^{n-1}}\Psi(e,S)(\nu)\,d\nu=\frac{|\tr S|^2+2|S|^2}{4n(n+1)(n+2)}.
		$$
		Reasoning as in the previous cases proves the lemma.
	\end{proof}

	\subsection{Proof of Theorem \ref{codimension.one.thm}}
	\begin{proof}
		In what follows we assume $|h|\leq 1$, use  $C_n$ to denote a generic constant that depends only on $n$ {and $g_0$. We continue to use $O(q)$ to denote a quantity bounded by $C_n|q|$.} Set
		$$
		K=\widehat h(0)\quad\text{and}\quad T=\tr h.
		$$
		With respect to $g_0$-orthonormal coordinates we have
		$
		dV_g=\sqrt{\det(I+h)}\,dV.
		$
		Using
		$$
		\sqrt{\det(I+h)} =1+\frac12T+\frac18T^2-\frac14|h|^2+O(|h|^3)
		$$
		{and $\vol_g(T^n)=1$ we get
			$$
			0=\frac12\int_{T^n}T\,dV+\frac18\int_{T^n}T^2dV-\frac14\int_{T^n}|h|^2dV+O(||h||_{L^{\infty}}||h||_{L^2}^2)
			$$}
		and so
		\begin{equation}\label{eq:volume-constraint}
			\text {tr}K=\frac12\int_{T^n}|h|^2\,dV-\frac14\int_{T^n}T^2\,dV+O(||h||_{L^{\infty}}||h||_{L^2}^2).
		\end{equation}
		{The next lemma uses  ${\bf M}(T_P)\leq{\bf M}_g(T_P+x)$ to show that the $L^2$-norms of $|K|$, $|T|$, and $|\nabla T|$ are much smaller than $||h||_{L^2}$.}
		\begin{lemm}\label{first.estimates}
			We have for some constant $C_n^*$ depending only on $n$ that
			$$|K|\leq C^*_n||h||^2_{L^2},\; ||T||_{L^2}^2\leq C^*_n||h||_{\mathcal W}||h||^2_{L^2},\;\text{and}\; ||\nabla T||_{L^2}^2\leq C^*_n||h||_{\mathcal W^2}||h||^2_{L^2}.$$
		\end{lemm}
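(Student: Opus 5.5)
The plan is to use only the first-order inequality coming from the competitors $x+T_P$, together with the divergence-free condition and the volume constraint \eqref{eq:volume-constraint}. Everything then reduces to a bound on individual Fourier coefficients, as in the proof of Proposition \ref{injectivity.estimate}.

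\emph{Step 1 (pointwise inequality).} Fix a rational hyperplane $P$ with unit normal $\nu$. Since $x+T_P$ is a competitor in $[T_P]$, the hypothesis ${\bf M}(T_P)=||T_P||_{s,g}$ and \eqref{Federer.stable.norm} give ${\bf M}_g(x+T_P)\geq {\bf M}(T_P)$. The same expansion as in \eqref{first.inequality}, now with $d=n-1$, gives, for all $x$,
$$F_P(x):=R_P(h)(x)+C_nR_P(|h|^2)(x)\geq 0.$$
Integrating over $T^n$ gives $\int_{T^n}F_P\,dV=\tr_PK+C_n||h||_{L^2}^2$.

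\emph{Step 2 (the bound on $K$).} Step 1 gives $\tr_PK\geq -C_n||h||^2_{L^2}$ for every rational $P$, hence for every $P$ by density. Write $\tr_PK=\tr K-K(\nu,\nu)$. The constraint \eqref{eq:volume-constraint} gives $|\tr K|\leq C_n||h||_{L^2}^2$, using $|h|\leq 1$. Hence $K(\nu,\nu)\leq C_n||h||^2_{L^2}$ for every unit $\nu$. For the lower bound, complete $\nu$ to an orthonormal basis $\{e_i\}$ with $e_1=\nu$ and use $\sum_iK(e_i,e_i)=\tr K\geq -C_n||h||_{L^2}^2$. This yields $K(\nu,\nu)\geq -nC_n||h||^2_{L^2}$. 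Since $K$ is symmetric, we get $|K|\leq C^*_n||h||^2_{L^2}$.

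\emph{Step 3 (Fourier bound on $T$).} Because $F_P\geq 0$, Steps 1 and 2 give $||F_P||_{L^1}=\tr_PK+C_n||h||^2_{L^2}\leq C_n||h||^2_{L^2}$. Therefore $||R_P(h)||_{L^1}\leq C_n||h||_{L^2}^2$ for every rational hyperplane $P$. Now fix $\xi\in\Z^n\setminus\{0\}$ and take $P=\xi^\perp$, which is rational. By \eqref{fourier.identity2},
$$|\tr_P\widehat h(\xi)|\leq ||R_P(h)||_{L^1}\leq C_n||h||^2_{L^2}.$$
With $\nu=\xi/|\xi|$ we have $\tr_P\widehat h(\xi)=\widehat T(\xi)-\widehat h(\xi)(\nu,\nu)$. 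The divergence-free condition gives $\widehat h(\xi)(\xi,\cdot)=0$, so $\widehat h(\xi)(\nu,\nu)=0$. Hence $|\widehat T(\xi)|\leq C_n||h||^2_{L^2}$ for $\xi\neq0$; for $\xi=0$ this holds by Step 2. So $\sup_\xi|\widehat T(\xi)|\leq C_n||h||^2_{L^2}$.

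\emph{Step 4 (conclusion).} Use $|\widehat T(\xi)|\leq\sqrt n|\widehat h(\xi)|$ and Plancherel. For $T$,
$$||T||_{L^2}^2=\sum_\xi|\widehat T(\xi)|^2\leq \sup_\xi|\widehat T(\xi)|\sum_\xi|\widehat T(\xi)|\leq C||h||_{L^2}^2||h||_{\mathcal W}.$$
For $\nabla T$,
$$||\nabla T||^2_{L^2}=\sum_\xi4\pi^2|\xi|^2|\widehat T(\xi)|^2\leq C||h||^2_{L^2}\sum_\xi\langle\xi\rangle^2|\widehat h(\xi)|=C||h||^2_{L^2}||h||_{\mathcal W^2}.$$

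The only delicate point is Step 3. It needs the $L^1$ bound on $R_P(h)$, which is obtained from nonnegativity of $F_P$ plus the smallness of $\tr_PK$ from Step 2. It also needs the observation that divergence-freeness kills the normal component, so that the hyperplane Radon transform detects the full trace mode by mode.
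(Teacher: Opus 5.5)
Your proposal is correct and is essentially the paper's proof. Like the paper, you integrate the first-order inequality from the competitors $x+T_P$ and combine it with the volume constraint to get $|K|=O(||h||_{L^2}^2)$. You then obtain an $L^1$ bound $O(||h||_{L^2}^2)$ on the hyperplane Radon transform, take $P=\xi^\perp$ together with $\widehat h(\xi)(\xi,\cdot)=0$ to bound $\sup_\xi|\widehat T(\xi)|$, and finish with Plancherel. The only cosmetic difference is that you bound $||R_P(h)||_{L^1}$ directly through the nonnegative function $F_P$, whereas the paper bounds $||R_P(h-K)||_{L^1}$ through its negative part and zero mean; the two are equivalent.
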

		\begin{proof}
			From \eqref{first.inequality}  we have that for every rational hyperplane $P$ and $x\in T^n$
			$$
			R_P(h)(x)\geq-C_nR_P(|h|^2)(x)
			$$
			Integrating over $T^n$ {gives
				$$\tr _PK=\int_{T^n}\tr_Ph\,dV=\int_{T^n}R_P(h)\,dV\geq -C_n\int_{T^n}R_P(|h|^2)\,dV=-C_n||h||^2_{L^2}.$$
			} Hence if $\nu$ is a unit normal vector to $P$ we obtain from \eqref{eq:volume-constraint} that
			$$K(\nu,\nu)=\tr K- \tr _PK\leq C_n||h||^2_{L^2}.$$
			Denseness of rational planes implies the estimate above holds for every unit vector $\nu$. This bounds the largest eigenvalue of $K$ by $C_n||h||^2_{L^2}$. Since $\tr K=O(||h||^2_{L^2})$ we have that
			$$|K|\leq C_n||h||^2_{L^2}.$$ 
			Put $b=h-K$. We have 
			$$R_P(b)=R_P(h)-\tr _PK\geq -C_nR_P(|h|^2)-\tr _PK$$ 
			and so its negative part $(R_P(b))_-$ satisfies
			$$(R_P(b))_-\leq C_nR_P(|h|^2)+C_n||h||^2_{L^2}.$$
			Since $R_P(b)$ has zero average  and the average of $R_P(|h|^2)$ is $||h||^2_{L^2}$ we obtain
			$$
			||R_P(b)||_{L^1(T^n)}=2\int_{T^n}(R_P(b))_-\,dV\leq C_n||h||^2_{L^2}.
			$$
			Fix $\xi\in\mathbb Z^n\setminus\{0\}$ and set $P_\xi=\xi^{\bot}$. The divergence-free condition gives $\widehat h(\xi)(\xi,\cdot)=0$, and therefore, using \eqref{fourier.identity2}, 
			$$
			\widehat T(\xi)=\tr _{P_\xi}\widehat h(\xi)+\widehat h(\xi)(\xi/|\xi|,\xi/|\xi|)=\tr _{P_\xi}\widehat h(\xi){=\widehat{R_{P_\xi}(h)}(\xi)=\widehat{R_{P_\xi}(b)}(\xi).}
			$$
			Consequently, using that $\widehat T(0)=\tr K$, we obtain
			$$
			\sup_{\xi\in\mathbb Z^n}|\widehat T(\xi)|\leq C_n||h||^2_{L^2}.
			$$
			Therefore
			$$
			||T||_{L^2}^2\leq\left(\sup_\xi|\widehat T(\xi)|\right)||T||_{\mathcal W}\leq C_n||T||_{\mathcal W} ||h||^2_{L^2}\leq C_n||h||_{\mathcal W} ||h||^2_{L^2} .
			$$
			Likewise,
			$$
			||\nabla T||_{L^2}^2=\sum_{\xi\in\Z^n}4\pi^2|\xi|^2|\widehat T(\xi)|^2\leq\left(\sup_\xi|\widehat T(\xi)|\right)||T||_{\mathcal W^2}\leq C_n||h||_{\mathcal W^2}||h||_{L^2}^2.
			$$
		\end{proof}
		We now choose the rational directions {to be} used in the proof.  Consider $C^*_n$ given by Lemma \ref{first.estimates}. With $\gamma_n$ a small constant to be chosen later (but depending only on $n$ {and $g_0$}) choose $0<\varepsilon_n\leq \gamma_n$ so that $2C^*_n\varepsilon_n\leq \gamma_n$. With that $\varepsilon_n$ chosen,  consider the rational directions $\{\nu_j\}_{j=1}^N$ with weights  $\{\lambda_j\}_{j=1}^N$ given by Lemma \ref{finite.rational.average}. We consider the rational hyperplanes $P_j=\nu_j^\perp$ and denote for simplicity $T_j=T_{P_j}.$ 
		
		After these choices are made,  any given constant $C$ that depends {on $n$, $g_0$,} and on  the geometry of the finite  collection of tori $\{T_j\}_{j=1}^N$  will also be denoted by $C_n$.
		
		The hypothesis in the theorem implies that  the stable norms for $g$ and $g_0$ agree on integral classes and, by homogeneity and continuity, on $H_{n-1}(T^n,\R)$. The stable norm for $g$ is therefore differentiable at $[T_{j}]\in H_{n-1}(T^n,\Z)$ and so Theorem 5.1 of \cite{cgn} implies that $T^n$ is foliated by area-minimizing hypersurfaces homologous to $T_j$.  Consider   $\{T_{j}(s)\}_{s\in S^1}$ to be a foliation of  $T^n$ by totally geodesic tori homologous to $T_j$. Using the implicit function theorem we find $\delta^*_n$ so that if $|h|_{C^3}<\delta^*_n$  we have for all $j=1,\ldots,N$ and $s\in S^1$, a function  $v_{j,s}\in C^{\infty}(T_{j}(s))$ satisfying
		$$
		\Sigma_{j}(s)=\{x+v_{j,s}(x)\nu_j:x\in T_{j}(s)\}\quad\text{and}\quad \int_{T_{j}(s)}v_{j,s}dA=0
		$$
		so that the hypersurfaces $\Sigma_j(s)$ have constant mean curvature and depend smoothly on $s$. Using the fact that $T^n$ is foliated by area-minimizing hypersurfaces homologous to $T_j$, the maximum principle implies that each $\Sigma_j(s)$ is necessarily an area-minimizing hypersurface. Using the fact that $\sup_{j,s}\|v_{j,s}\|_{C^2(T_{j}(s))}$ can be made arbitrarily small and that  $v_{j,s}$ satisfies a quasi-linear elliptic equation, we improve the estimate to 
		$$\sup_{j,s}|v_{j,s}|_{C^2(T_{j}(s))}\leq C_n|h|_{C^3}.$$
		Thus, with  $a_j=h(\nu_j,\nu_j)$, the linearization of the minimal surface  equation gives
		\begin{equation}\label{EL:equation}
			\Delta_{P_j}v_{j,s}=\frac12\Pi_j^0\partial_{\nu_j}(T+a_j)+O_n(|h|^2_{C^3}),
		\end{equation}
		where  $\Pi_j^0=\Pi^0_{P_j}$.
		Using $h^\top$ to denote {$h|_{P_j}$,} the Taylor expansion for the area of $\Sigma_{j}(s)$ up to cubic order in Section \ref{area.expansion.appendix} gives
		\begin{equation}\label{cubic.order}
			\begin{aligned}
				\text{area}_g(\Sigma_j(s))={\bf M}(T_j)&+\frac12\int_{T_{j}(s)}\tr _{P_j} h\,dA+\frac12\int_{T_{j}(s)}v_{j,s}\partial_{\nu_j}\tr _{P_j} h\,dA\\
				&+\int_{T_{j}(s)} h(\nabla v_{j,s},\nu_j)\,dA+\frac12\int_{T_{j}(s)}|\nabla v_{j,s}|^2\,dA\\
				&+\int_{T_{j}(s)}\left[\frac18(\tr _{P_j} h)^2-\frac14|h^\top|^2\right]dA+O(|h|_{C^3}^3).
			\end{aligned}
		\end{equation}
		Using the fact that $\delta h=0$  and  $\tr _{P_j} h=T-a_j$ we obtain
		\begin{multline*}
			\frac12\int_{T_{j}(s)}v_{j,s}\partial_{\nu_j}\tr _{P_j} h\,dA+\int_{T_{j}(s)} h(\nabla v_{j,s},\nu_j)\,dA
			\\
			=\frac12\int_{T_{j}(s)}v_{j,s}\partial_{\nu_j}(T-a_j)\,dA+\int_{T_{j}(s)} v_{j,s}\partial_{\nu_j}a_j\,dA
			\\
			=\frac12\int_{T_{j}(s)}v_{j,s}\partial_{\nu_j}(T+a_j)\,dA.
		\end{multline*}
		Thus, using \eqref{EL:equation} we conclude
		\begin{multline*}
			\frac12\int_{T_{j}(s)}v_{j,s}\partial_{\nu_j}\tr _{P_j} h\,dA+\int_{T_{j}(s)} h(\nabla v_{j,s},\nu_j)\,dA\\
			=-\int_{T_{j}(s)}|\nabla v_{j,s}|^2\,dA+O(|h|_{C^3}^3).
		\end{multline*}
		Inserting this into \eqref{cubic.order} and using the hypothesis in the theorem we obtain
		$$
		\begin{aligned}
			0={}&\frac12\int_{T_{j}(s)}\tr _{P_j} h\,dA-\frac12\int_{T_{j}(s)}|\nabla v_{j,s}|^2\,dA\\
			&+\int_{T_{j}(s)}\left[\frac18(\tr _{P_j} h)^2-\frac14|h^\top|^2\right]dA+O(|h|_{C^3}^3).
		\end{aligned}
		$$
		Let $v_j$ be the function on $T^n$ whose restriction to $T_j(s)$ is $v_{j,s}$, and set
		$$
		U_j=\int_{T^n}|\nabla_{P_j}v_j|^2\,dV.
		$$
		Integrating the previous identity with respect to the parameter $s$ we obtain
		\begin{equation}\label{U.identity}
			\frac12 U_j=\frac12\int_{T^n}\tr _{P_j} h\,dV+\int_{T^n}\left[\frac18(\tr _{P_j} h)^2-\frac14|h^\top|^2\right]dV+O(|h|_{C^3}^3).
		\end{equation}
		Using Lemma \ref{finite.rational.average} applied to $K= \int_{T^n}hdV$ and  Lemma \ref{first.estimates} we obtain
		$$
		\begin{aligned}
			\frac12\sum_j\lambda_j\int_{T^n}\tr _{P_j} h\,dV&=\frac12\sum_j\lambda_j \tr_{P_j}K\geq\frac{n-1}{2n}\tr K-\frac{\varepsilon_n}{2} |K|\\
			&\geq  \frac{n-1}{2n}\tr K-\frac{\varepsilon_n}{2}C^*_n ||h||^2_{L^2}\!\!\geq\frac{n-1}{2n}\tr K-\gamma_n ||h||^2_{L^2}.
		\end{aligned}
		$$
		Combining with \eqref{eq:volume-constraint} and Lemma \ref{first.estimates}  we see that
		$$
		\frac12\sum_j\lambda_j\int_{T^n}\tr _{P_j} h\,dV\geq \left(\frac{n-1}{4n}-\gamma_n\right)||h||^2_{L^2}+O(||h||_{\mathcal W}||h||_{L^2}^2).
		$$
		Using Lemma \ref{finite.rational.average} applied to $h$ and recalling Lemma \ref{first.estimates} again we obtain 
		\begin{multline*}
			\sum_j\int_{T^n}\lambda_j\left[\frac18(\tr _{P_j} h)^2-\frac14|h^\top|^2\right]dV\\
			\geq \frac{n^2-5}{8n(n+2)}\int_{T^n}T^2dV-\left(\frac{n^2-3}{4n(n+2)}+\gamma_n\right)||h||^2_{L^2}\\
			\geq -\left(\frac{n^2-3}{4n(n+2)}+\gamma_n\right)||h||^2_{L^2}+O(||h||_{\mathcal W}||h||^2_{L^2}).
		\end{multline*}
		
		Inserting these estimates into \eqref{U.identity} we obtain
		\begin{equation}\label{contradiction.estimate}
			\sum_j\lambda_j U_j\geq  \left(\frac{n+1}{2n(n+2)}-4\gamma_n\right)||h||^2_{L^2}+O(||h||_{\mathcal W}||h||^2_{L^2})+O(|h|_{C^3}^3).
		\end{equation}
		We estimate $U_j$ and then we use the  inequality above to conclude that $h=0$. Consider $f_j$ given by Lemma \ref{fourier.lemma} with $P=P_j$. Set $\phi_j=v_j-f_j$. We have from \eqref{EL:equation} that
		$$\Delta_{P_j}\phi_j=\frac12\Pi_j^0\partial_{\nu_j}T+O_n(|h|^2_{C^3}).
		$$
		The Poincar\'e constants of $\{T_j\}_{j=1}^N$ are bounded from below by some $\delta_n^*$. The function $\phi_j$ has vanishing mean on  $T_j+x$  and so we see from the equation above and integration by parts that
		$$\int_{T_j+x}|\nabla_{P_j}\phi_j|^2dA\leq C_n\int_{T_j+x}|\nabla T|^2dA+O(|h|^4_{C^3}).$$
		From Lemma \ref{first.estimates} we deduce
		\begin{equation}\label{gradient.phi}
			\int_{T^n}|\nabla_{P_j}\phi_j|^2dV=O(||h||_{\mathcal W^2}||h||^2_{L^2}+|h|^4_{C^3}).
		\end{equation}
		Hence we obtain
		\begin{multline}\label{uj.estimate}
			U_j\leq  \frac{6}{5}\int_{T^n}|\nabla_{P_j}f_j|^2\,dV+C_n\int_{T^n}|\nabla_{P_j}\phi_j|^2\,dV\\=\frac{6}{5}\int_{T^n}|\nabla_{P_j}f_j|^2\,dV+O(||h||_{\mathcal W^2}||h||^2_{L^2}+|h|^4_{C^3}).
		\end{multline}
		Using the notation set in Lemma \ref{finite.rational.average} we have from Lemma \ref{fourier.lemma} that
		$$\int_{T^n}|\nabla_{P_j}f_j|^2\,dV=\sum_{\xi \in \Z^n,\, {\xi\neq0}} \Psi(\xi/|\xi|,\widehat h(\xi),\nu_j).$$
		Thus we obtain from Lemma \ref{finite.rational.average} and Lemma \ref{first.estimates} that
		$$
		\begin{aligned}\sum_j\int_{T^n}\lambda_j|\nabla_{P_j}f_j|^2dV&\leq  \sum_{\xi \in \Z^n,\,{\xi\neq0}}\! \frac{|\tr \widehat h(\xi)|^2+2|\widehat h(\xi)|^2}{4n(n+1)(n+2)}\!+\!\gamma_n\sum_{\xi \in \Z^n,\,{\xi\neq0}}\!|\widehat h(\xi)|^2\\
			&\leq \left(\frac{1}{2n(n+1)(n+2)}+\gamma_n\right)||h||^2_{L^2}+C_n||T||^2_{L^2}.
		\end{aligned}
		$$
		Hence, using this estimate in \eqref{uj.estimate} and then Lemma \ref{first.estimates} to estimate  $||T||^2_{L^2}$, we obtain
		$$\sum_j\lambda_j U_j\leq  \frac{6}{5}\left(\frac{1}{2n(n+1)(n+2)}+\gamma_n\right)||h||^2_{L^2}+O(||h||_{\mathcal W^2}||h||^2_{L^2}+|h|^4_{C^3})
		.$$
		Combining with \eqref{contradiction.estimate} we see that
		$$
		\begin{aligned}
			&\frac{6}{5}\left(\frac{1}{2n(n+1)(n+2)}+\gamma_n\right)||h||^2_{L^2}\\
			&\geq  \left(\frac{n+1}{2n(n+2)}-4\gamma_n\right)||h||^2_{L^2}+O(||h||_{\mathcal W^2}||h||^2_{L^2}+|h|^3_{{C^3}}).
		\end{aligned}
		$$
		{We have $ \frac{6}{5}\frac{1}{2n(n+1)(n+2)}<\frac{n+1}{2n(n+2)}$. Thus we can find $\gamma_n$ depending only on $n$ so that the inequality above becomes, for some $\alpha=\alpha(n)>0$,
			\begin{equation}\label{final.identity}
				0\geq \alpha||h||^2_{L^2}+O(||h||_{\mathcal W^2}||h||^2_{L^2}+|h|^3_{{C^3}})
		\end{equation}}
		We have $||h||_{\mathcal W^2}\leq {C_{n}}|h|_{C^{3+n}}$. If $|h|_{C^{n+10}}\leq 1$, interpolation \cite[p.~125]{Nirenberg1959} gives
		$$
		\|h\|_{C^3(T^n)}^3\leq  {C_{n}}\|h\|_{L^2(T^n)}^{\frac{3(n+7)}{n+10+n/2}}={C_{n}}\|h\|_{L^2(T^n)}^{c_n}\|h\|_{L^2(T^n)}^2,\quad\text{where }c_n>0.
		$$
		Hence, we can find $\delta_n$, {depending only on $n$ and $g_0$, so that if  $$||h||_{L^2}+|h|_{C^{n+10}}<\delta_n$$ then the term
			$O(||h||_{\mathcal W^2}||h||^2_{L^2}+|h|^3_{{C^3}})$ in \eqref{final.identity} becomes $\geq - \alpha/2||h||^2_{L^2}.$ This  implies that $h=0$.}
	\end{proof}

	\section{Non-rigidity}\label{non-rigid}
	
	{Fix a Euclidean unit-volume metric $g_0$ on  $T^n=\R^n/\Z^n$. Geometric quantities are computed with respect to $g_0$ unless it is mentioned otherwise.  Set $a=|dx_n|$ and consider $e_n$ to be the unit vector so that $e_n^*=dx_n/a$.

		Given a metric $g$ on $T^n$, consider the symmetric $2$-tensor $G=\det(g)g^{-1}$ (with respect to a $g_0$-orthonormal frame). The stable norm of $g$ can be computed explicitly for a large class of metrics using the following remarks.
		
		Consider a parallel form $v\in H^1(T^n,\Z)$ and $\Sigma$ an integral cycle homologous to $PD(v)$  with a unit normal vector $\nu$ defined almost everywhere. {Here $PD$ denotes the Poincar\'e dual.} Then $\int_{\Sigma}\nu\, dA=v$ meaning that for every vector $e\in\R^n$
		$$\int_{\Sigma}\nu\cdot e\, dA=\int_{\Sigma}\iota_e dV=\int_{T^n}v\wedge \iota_e dV= \int_{T^n}v(e) dV=v(e).$$
		The slice $\Sigma_t=\langle\Sigma,x_n,t\rangle$, which informally corresponds to $\Sigma\cap\{x_n=t\}$ (see \cite[Section 6.4]{leon}), is well defined for almost all $t\in \R/\Z$ and has a  unit normal vector denoted by $\nu_t$. With $\iota:\{x_n=t\}\rightarrow T^n$ being the inclusion, we have $[\Sigma_t]=PD(\iota^*v)$ and so, for every  $e\in\R^{n-1}$,
		$$\int_{\Sigma_t}\nu_t\cdot e\, dA=v(e)\area(\{x_n=t\})=av(e)\quad\text{and}\quad\int_{\Sigma_t}\nu_t\cdot e_n\, dA=0$$
		Thus we see that $\int_{\Sigma_t}\nu_t\, dA=av_0$, where $v=v_0+v_ne_n^*$ and $v_0\bot e_n^*$. 
		
		The next theorem implies Theorem \ref{lots.deformations}.
		\begin{thm}\label{stable.warp} Assume the  smooth metric $g$ is such that $G=g(x_n)+dx_n^2$, where $x_n\mapsto g(x_n)$ is  a smooth family of flat metrics on $T^{n-1}$.
			
			We have for all $v=v_0+v_ne_n^*\in H^1(T^n,\R)$, where $v_0\bot e_n^*$, that
			$$
			||PD(v)||_{s,g}=\sqrt{\left(\int_{\R/\Z}|v_0|_{g(t)}dt\right)^{2}+a^2v_n^2}.
			$$
			Furthermore, the following properties hold:
			\begin{itemize}
				\item[(i)] Every primitive class in $H_{n-1}(T^n,\Z)$ admits a foliation by minimal tori;
				\item[(ii)] The unit stable ball $\{w\in H_{n-1}(T^n,\R):||w||_{s,g}\leq 1\}$ is $C^2$ if and only if it is an ellipsoid;
				\item[(iii)] If $n\geq 3$ there are examples where $g$ is not flat and the stable norm is not $C^2$.
			\end{itemize}
	\end{thm}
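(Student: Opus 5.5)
The plan is to prove the formula by a calibration argument paired with a matching lower bound built from slicing. Throughout I use $\area_g(\Sigma)=\int_\Sigma\sqrt{G(\nu,\nu)}\,dA$. With $G=g(x_n)+dx_n^2$, write $\nu=\nu'+(\nu\cdot e_n)e_n$, where $\nu'\perp e_n$. Then
\[
G(\nu,\nu)=|\nu'|_{g(x_n)}^2+(\nu\cdot e_n)^2 .
\]

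For the lower bound, take an integral cycle $\Sigma$ homologous to $PD(v)$ and slice it by $x_n=t$.
\begin{itemize}
\item The coarea formula gives $dA=dA_t\,dt/(a|\nu'|)$ on the part of $\Sigma$ where $\nu'\neq 0$, and the slice normal is $\nu_t=\nu'/|\nu'|$.
\item Apply Cauchy--Schwarz (Minkowski) to the vectors $(|\nu'|_{g(x_n)},\,\nu\cdot e_n)$: integrating $\sqrt{G(\nu,\nu)}\,dA\ge|\int(\,|\nu'|_{g},\,\nu\cdot e_n)\,dA|$ bounds $\area_g(\Sigma)$ below by
\[
\sqrt{\Bigl(\int_\Sigma|\nu'|_{g(x_n)}dA\Bigr)^2+\Bigl(\int_\Sigma\nu\cdot e_n\,dA\Bigr)^2}.
\]
\item The second integral equals $v(e_n)=av_n$ by the remark before the theorem. (Note $e_n^*=dx_n/a$, so I need to check the normalization of $v_n$ here.)
\item For the first integral, coarea gives $\int_\Sigma|\nu'|_{g}dA\ge\int_{\R/\Z}\frac1a\int_{\Sigma_t}|\nu_t|_{g(t)}dA_t\,dt$.
\item On each slice, $g(t)$ is flat, so the norm is convex and constant in $x$. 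Jensen then gives $\int_{\Sigma_t}|\nu_t|_{g(t)}\ge|\int_{\Sigma_t}\nu_t|_{g(t)}=a|v_0|_{g(t)}$.
\end{itemize}
Together these give the inequality $\ge$. I expect the constants $a$ to cancel as in the displayed formula; this step is bookkeeping only.

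For the upper bound and (i), I construct explicit hypersurfaces achieving equality. In each slice $\{x_n=t\}$, the flat torus has the planes with conormal $v_0$ as calibrated minimizers of $|\cdot|_{g(t)}$. I then tilt the profile in $t$. Specifically, look for $\Sigma$ as a level set of $u(x)=v_0(x')+\phi(x_n)$, where $x'$ are the first $n-1$ coordinates and $\phi'$ is chosen so that equality holds in the Cauchy--Schwarz step. This requires the ratio $|\nu'|_{g}:\nu\cdot e_n$ to be constant along $\Sigma$, namely
\[
\frac{|v_0|_{g(t)}}{\phi'(t)}=\frac{\int|v_0|_{g}}{av_n}.
\]
(Up to metric normalizations, this makes $\phi'(t)\propto|v_0|_{g(t)}$.) The equality case of Jensen holds automatically, since each slice normal is constant. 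The level sets of $u$ foliate $T^n$. When $v$ is integral and primitive, $\int\phi'=v_n$ holds after fixing the constant, so $u$ descends mod $\Z$ and the leaves are closed tori. Each leaf attains the lower bound, hence minimizes. This gives the formula on integral classes, it extends to all of $H_{n-1}(T^n,\R)$ by homogeneity and continuity, and it gives (i).

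For (ii), define $N(v_0)=\int|v_0|_{g(t)}dt$. This is an average of Euclidean norms, so it is smooth away from $0$. The unit ball of $\sqrt{N(v_0)^2+a^2v_n^2}$ is $C^2$ exactly when $N^2$ is $C^2$ at $v_0=0$, which holds exactly when $N^2$ is quadratic, i.e. when the ball is an ellipsoid. The direction ``ellipsoid $\Rightarrow C^2$'' is clear. For the converse, $N^2$ is $2$-homogeneous, so if it is $C^2$ at the origin then it equals its Hessian quadratic form. For (iii), choose $g(t)$ to be a family of non-homothetic flat metrics, for instance diagonal metrics with varying ratios, for $n-1\ge2$. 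Then $N$ is not a Euclidean norm; for example, its restriction to coordinate directions violates the parallelogram law. Finally, I check that $g=\det(G)^{1/(n-1)}G^{-1}$ is non-flat, for instance by computing its curvature.

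The main obstacle I anticipate is the equality construction. The tilt $\phi$ has to make the Cauchy--Schwarz step sharp while the slice normals stay calibrated, and the normalization constants (the $a$'s in the coarea formula) have to match the claimed formula.
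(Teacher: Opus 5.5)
Your proposal is correct and follows the paper's proof essentially step by step: the Minkowski-plus-slicing lower bound, the closed form $v_0+c\,|v_0|_{g(x_n)}\,dx_n$ whose level sets attain equality (the paper checks equality by integrating leaf areas with the co-area formula rather than inequality by inequality), the $2$-homogeneity/Hessian argument for (ii), and the parallelogram-law example for (iii). A few bookkeeping points need fixing. First, $G(\nu,\nu)=|\nu'|^2_{g(x_n)}+a^2(\nu\cdot e_n)^2$ while $\int_\Sigma\nu\cdot e_n\,dA=v(e_n)=v_n$, so your two slips cancel; second, the primitive classes $\pm PD(dx_n)$ with $v_0=0$ must be foliated separately by the tori $\{x_n=t\}$; and third, non-flatness follows quickly from the Gauss equation, since these tori are minimal, intrinsically flat, and not totally geodesic when the ratios vary.
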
}
	\begin{proof}
		Consider a primitive $v\in H^1(T^n,\Z)$ with $v_0\neq 0$, $\Sigma$ an integral cycle homologous to $PD(v)$ with a unit normal vector $\nu$, and  the slice $\Sigma_t=\langle\Sigma,x_n,t\rangle$ with a unit normal vector $\nu_t$. Consider as well the pointwise orthogonal decomposition $\nu=\nu_0+\nu_ne_n$. We have $\nu_t=\nu_0/|\nu_0|$ and $|\nabla^\Sigma x_n|=a|\nu_0|$.
		
		Using $\int_{\Sigma}\nu\, dA=v$  we see that
		\begin{multline}\label{area.minimizing.stable}
			{\bf M}_{g}(\Sigma)=\int_\Sigma \sqrt{G(\nu,\nu)}dA=\int_\Sigma \sqrt{|\nu_0|^2_{g(x_n)}+a^2\nu_n^2}dA\\
			\geq\left(\left(\int_{\Sigma}|\nu_0|_{g(x_n)}dA\right)^2+a^2\left(\int_{\Sigma}\nu_ndA\right)^2\right)^{1/2}\\=\left(\left(\int_{\Sigma}|\nu_0|_{g(x_n)}dA\right)^2+a^2v_n^2\right)^{1/2}.
		\end{multline}
		We also have for almost all $t$ that $\int_{\Sigma_t}\nu_t\, dA=av_0$ and so
		$$a|v_0|_{g(t)}=\left| \int_{\Sigma_t}\nu_t dA\right|_{g(t)}\leq\int_{\Sigma_t}|\nu_t|_{g(t)}dA=a\int_{\Sigma_t}{|\nu_0|_{g(t)}}{|\nabla^\Sigma x_n|}^{-1}dA.$$
		Integrating in $t$ and using the co-area formula, we obtain
		$$\int_{\R/\Z}|v_0|_{g(t)}dt\leq \int_{\R/\Z}\int_{\Sigma_t}{|\nu_0|_{g(t)}}{|\nabla^\Sigma x_n|}^{-1}dAdt=\int_{\Sigma}|\nu_0|_{g(x_n)}dA.$$
		Thus 
		\begin{equation}\label{area.minimizing.stable0}
			{\bf M}_{g}(\Sigma)\geq \sqrt{\left(\int_{\R/\Z}|v_0|_{g(t)}dt\right)^{2}+a^2v_n^2}.
		\end{equation}
		Minimizing the left-hand side among all  cycles in $[PD(v)]$ gives
		$$||PD(v)||_{s,g}\geq \sqrt{\left(\int_{\R/\Z}|v_0|_{g(t)}dt\right)^{2}+a^2v_n^2}.$$
		We now show the reverse inequality. Set $A=\int_{\R/\Z}|v_0|_{g(t)}dt$ and consider
		$$\lambda=v_0+\frac{v_n}{A}|v_0|_{g(x_n)}e_n^*.$$
		The $1$-form $\lambda$ is closed  and cohomologous to $v$, since $e_n^*=a^{-1}dx_n$ and the coefficient of $dx_n$ in $\lambda-v$ has zero average. Thus there is a smooth map $$f:T^n\rightarrow \R/\Z\quad\text{with}\quad df=\lambda.$$
		We have $|d f|>0$ and so $\{f^{-1}(t)\}_{t\in \R/\Z}$ gives a foliation of $PD(v)$ by smooth embedded tori.  Using the co-area formula we obtain
		$$\int_{\R/\Z}\text{area}_g(f^{-1}(t))dt=\int_{T^n}|df|_{g}dV_g.$$
		We have $|df|_{g}^2=g^{-1}(df,df)$ and $dV_g=\sqrt{\det(g)}dV$. Thus
		\begin{multline*}
			\int_{\R/\Z}\text{area}_g(f^{-1}(t))dt=\int_{T^n}\sqrt{g^{-1}(df,df)}\sqrt{\det(g)}dV\\
			=\int_{T^n}\sqrt{G(df,df)}dV=\frac{1}{A}\int_{T^n}\sqrt{|v_0|_{g(x_n)}^2A^2+a^2v_n^2|v_0|_{g(x_n)}^2}dV\\
			=\frac{1}{A}\int_{\R/\Z}|v_0|_{g(t)}\sqrt{A^2+a^2v_n^2}dt=\sqrt{\left(\int_{\R/\Z}|v_0|_{g(t)}dt\right)^{2}+a^2v_n^2}.
		\end{multline*}
		Combining with \eqref{area.minimizing.stable0} we see that each $f^{-1}(t)$ is area-minimizing with area
		$$||PD(v)||_{s,g}=\sqrt{\left(\int_{\R/\Z}|v_0|_{g(t)}dt\right)^{2}+a^2v_n^2}.$$
		Scaling implies that the identity above  holds for all $v\in H^1(T^n,\Q)$, $v_0\neq 0$ and continuity implies that it holds for all $v\in H^1(T^n,\R)$.
		
		If $v\in H^1(T^n,\Z)$ is primitive and such that $v_0\neq 0$, we showed that $PD(v)$ is foliated by area-minimizing tori. If $v=dx_n$, then each $\{x_n=t\}$ achieves equality in \eqref{area.minimizing.stable} and thus provides a foliation by area-minimizing tori. This proves (i). 
		
		We now prove (ii). Consider the norm on $H^1(T^{n-1},\R)$ defined as
		\begin{equation*}\label{norm.B}
			B(v_0)=\int_{\R/\Z}|v_0|_{g(t)}dt
		\end{equation*}
		and the function
		$$\Phi:H^1(T^{n-1},\R)\rightarrow \R, \quad \Phi(u)=||PD(u+ e_n^*)||_{s,g}=\sqrt{B(u)^2+a^2}.$$
		If the unit stable ball is $C^2$ then $\Phi$ is $C^2$ at the origin. We have 
		$D^2\Phi_{|0}(u,u)=B^2(u)/a$ and so the norm $B$ comes from an inner product on $H^1(T^{n-1},\R)$. Thus $v\mapsto ||PD(v)||^2_{s,g}$ is a quadratic polynomial on $v$ and so 
		$$\{v\in H^1(T^n,\R):||PD(v)||_{s,g}\leq 1\}$$
		is an ellipsoid.  The map $PD:H^1(T^n,\R)\rightarrow H_{n-1}(T^n,\R)$ is linear and invertible. Thus the unit stable ball in $H_{n-1}(T^n,\R)$ is an ellipsoid. The other direction is immediate.
		
		We prove (iii). Extend $e_n^*$ to an orthonormal basis $\{e_j^*\}_{j=1}^n$. Choose a positive non-constant smooth function $\phi$ on $\R/\Z$. Consider
		$$G=\phi^2(x_n)((e_1^*)^2+\ldots+(e_{n-2}^*)^2)+(e_{n-1}^*)^2+dx_n^2=g(x_n)+dx_n^2.$$
		Set $g$ to be the unique metric so that $G=\det(g)g^{-1}$. We have 
		$$
		g=a^{\frac{2}{n-1}}\phi^{-\frac{2}{n-1}}((e_1^*)^2+\ldots+(e_{n-2}^*)^2)+a^{\frac{2}{n-1}}\phi^{\frac{2(n-2)}{n-1}}((e_{n-1}^*)^2+a^{-2}(e_n^*)^2)
		$$
		Writing $g=h(x_n)+\phi^{\frac{2(n-2)}{n-1}}a^{-\frac{2(n-2)}{n-1}}(e_n^*)^2$, we see that the induced metric on the minimal tori $\{x_{n}=t\}$ is flat, but they are not totally geodesic if $h'(x_n)\neq 0$. Thus, from Gauss equation, $g$ is not flat.
		We are left to address the regularity of the stable norm. Consider the norm on $\R^2$  given by
		$$N(x,y)=||PD(xe_1^*+y e_{n-1}^*)||_{s,g}=\int_{\R/\Z}\sqrt{x^2\phi^2(t)+y^2}dt.$$
		The norm $N$ on $\R^2$ does not satisfy the parallelogram law and so $N$ does not come from an inner product. Thus the unit stable ball is not an ellipsoid and so, by (ii),  the stable norm is not $C^2$.
		
	\end{proof}

	When $n=3$ we obtain metrics which preserve the stable norm of Euclidean metrics. This is Theorem \ref{bangert.question}.
	{\begin{thm}\label{n=3.example} Consider a unit-volume flat metric $g_0$ on $T^3$. There is a smooth family of metrics $\{{g}(s)\}_{0\leq s<+\infty}$ so that
			\begin{itemize}
				\item ${g}(0)=g_0$ and  ${g}(s)$ is  not flat for $s>0$;
				\item the stable norm on $H_2(T^3,\R)$ remains fixed;
				\item  $\vol_{{g}(s)}(T^3)<1$ if $s\neq 0$ and $\lim_{s\to +\infty}\vol_{{g}(s)}(T^3)=0.$
			\end{itemize} 
	\end{thm}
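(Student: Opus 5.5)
The plan is to apply Theorem \ref{stable.warp} with $n=3$. I would work in the $g_0$-orthonormal coframe $\{e_1^*,e_2^*,e_3^*\}$ with $e_3^*=dx_3/a$. Since $g_0$ has unit volume, $G_0=\det(g_0)g_0^{-1}$ is the identity in this frame, so
$$G_0=Q_0+dx_3^2/a^2,$$
where $Q_0$ is the Euclidean form on $\mathrm{span}(e_1^*,e_2^*)$.

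\emph{Step 1: normalization.} I would first rerun the proof of Theorem \ref{stable.warp} with $dx_n^2$ replaced by the constant multiple $a^{-2}dx_n^2$. Nothing in the calibration argument changes except the constant in front of $v_n^2$. For $G=g(x_3)+a^{-2}dx_3^2$ this gives
$$\|PD(v)\|_{s,g}=\sqrt{B(v_0)^2+|v_n|^2},\qquad B(v_0)=\int_{\R/\Z}|v_0|_{g(t)}\,dt.$$
The stable norm of $g_0$ corresponds to $B(v_0)=|v_0|$. So the stable norm is unchanged exactly when $B$ is the Euclidean norm on $\R^2$.

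\emph{Step 2: the family.} The quantity $|v_0|_{g(t)}$ is the support function of the dual ellipse $E_t$. Therefore $B$ is the support function of the Minkowski average $\int E_t\,dt$. I would take $E_t=R_{2\pi t}E$, where $E$ has semi-axes $\alpha\ge\beta>0$. The averaged body is then rotation invariant, hence a disk. Its radius is the mean width $\mathrm{Per}(E)/(2\pi)$. Imposing $\mathrm{Per}(E)=2\pi$ makes $B$ exactly Euclidean, with $\alpha=\beta=1$ giving $g_0$.

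I parametrize the constraint curve $\mathrm{Per}(E)=2\pi$ by $s\in[0,\infty)$, for instance by an increasing reparametrization of $\alpha\in[1,\pi/2)$. Along it $\beta\to0$ as $\alpha\to\pi/2$. Each $g(t)$ is a smooth family of flat metrics, so $g(s)$ is smooth in $(s,x)$. By Theorem \ref{stable.warp}(i), every class is also foliated by area-minimizing tori.

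\emph{Step 3: volume.} Since $\det G=\det(g)^{2}$, we have $\vol_{g(s)}(T^3)=\int_{T^3}(\det G)^{1/4}\,dV$. In the frame above $\det G$ is a constant multiple of $(\alpha\beta)^2$, and at $s=0$ it gives volume $1$. Hence $\vol_{g(s)}(T^3)=\sqrt{\alpha\beta}$ up to the normalization that is $1$ at $s=0$. The isoperimetric inequality at fixed perimeter gives $\pi\alpha\beta<\pi$ unless $E$ is a circle, so the volume is $<1$ for $s>0$. It tends to $0$ as $\beta\to0$.

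\emph{Step 4: non-flatness.} This is the step needing care. As in Theorem \ref{stable.warp}(iii), the leaves $\{x_3=t\}$ have flat induced metric. Writing $g(s)=h(x_3)+\psi(x_3)dx_3^2$, the second fundamental form of the leaves is proportional to $h'(x_3)$. For $s>0$ the ellipse genuinely rotates with $t$, so $h'\not\equiv0$. I would then use the Gauss equation, with $K_{\text{leaf}}=0$ and $\det A\ne0$ at some point, to get nonzero ambient sectional curvature there. If $\det A$ happened to vanish identically, I would instead compute $R(\partial_i,\partial_3,\partial_3,\partial_i)$ directly from the warped form. This is a routine Christoffel calculation, because the metric depends only on $x_3$.
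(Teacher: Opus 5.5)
Your proposal is correct and is essentially the paper's proof. Your perimeter-$2\pi$ rotating ellipse is exactly the paper's family (semi-axes $b/c$ and $1/(bc)$, so $\sqrt{\alpha\beta}=c^{-1}$), and the only differences are cosmetic: you absorb the factor $a$ by rerunning Theorem \ref{stable.warp} with $(e_3^*)^2$ instead of rescaling $g=a^{-1}\widetilde g$, and you phrase the averaging through Cauchy's mean-width formula and the volume bound through the isoperimetric inequality. The fallback in Step 4 is unnecessary, since the leaves are minimal, so $A$ is traceless and $\det A=-\tfrac12|A|^2<0$ wherever $A\neq 0$, which is precisely the paper's Gauss-equation argument.
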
}
	
	\begin{proof}
		Extend $e_3^*$ to a $g_0$-orthonormal basis $\{e_j^*\}_{j=1}^3$. Fix $b\geq 1$, and set
		$$c=\int_{\R/\Z}\sqrt{b^2\cos^2(2\pi t)+b^{-2}\sin^2(2\pi t)}dt\geq 1.$$
		Consider $G=a^2h(x_3)+dx_3^2$ where, with respect to the coframe $\{e_1^*,e_2^*\}$, 
		$$
		h(x_3)=c^{-2}
		\begin{pmatrix}
			\cos\theta&\sin\theta\\
			-\sin\theta&\cos\theta
		\end{pmatrix}
		\begin{pmatrix}
			b^2&0\\
			0&b^{-2}
		\end{pmatrix}
		\begin{pmatrix}
			\cos\theta&-\sin\theta\\
			\sin\theta&\cos\theta
		\end{pmatrix},\quad \theta=2\pi x_3.
		$$		
		Writing $v_0=|v_0|(\cos\alpha e_1^*+\sin\alpha e_2^*)$, we have
		\begin{multline*}
			\int_{\R/\Z}|v_0|_{h(t)}dt=\frac{|v_0|}{c}\int_{\R/\Z}\big(b^2\cos^2(2\pi t+\alpha)+b^{-2}\sin^2(2\pi t+\alpha)\big)^{1/2}dt
			=|v_0|.
		\end{multline*}
		Thus, if $\widetilde g$ denotes the unique metric so that $G=\det(\widetilde g)\widetilde g^{-1}$, we have from Theorem \ref{stable.warp} that $||PD(v)||_{s,\widetilde g}=a|v|$ for all $v\in H^1(T^3,\R)$. Set $g=a^{-1}\widetilde g$. Since areas scale by $a^{-1}$ when $\widetilde g$ is replaced by $g$, we have
		$$||PD(v)||_{s,g}=a^{-1}||PD(v)||_{s,\widetilde g}=|v|=||PD(v)||_{s,g_0}.$$
		Since $\det G=a^6c^{-4}$, this metric is
		\begin{multline}\label{metric.g}
			g=b^{-2}\big(\cos(2\pi x_3)e_1^*-\sin(2\pi x_3)e_2^*\big)^2\\
			+b^2\big(\sin(2\pi x_3)e_1^*+\cos(2\pi x_3)e_2^*\big)^2+c^{-2}(e_3^*)^2.
		\end{multline}
		If $b=1$ then $c=1$ and so $g=g_0$. We have $\vol_{g}(T^3)=c^{-1}<1$ if $b>1$ and 
		$$\lim_{b\to\infty}\vol_{g(b)}(T^3)=0.$$
		The metric induced by ${g}$ on $\{x_3=t\}$ is flat and $\{x_3=t\}$ is a minimal torus which is not totally geodesic if $b>1$.  Hence ${g}$ is not flat if $b>1$ by Gauss equation.  The family $\{{g}(s)\}_{s\geq 0}$ comes from considering $b=1+s$ in \eqref{metric.g}.
	\end{proof}

	\section{Volume spectrum rigidity}\label{almgren-pitts-rigid}
	We say that the volume spectra of two metrics $g$ and $g'$ on a closed manifold $M$ {strongly coincide} if for every $k\in\mathbb N$ and every finite covering map $p:M'\to M$,
	$$
	\omega_k(M',p^*g')=\omega_k(M',p^*g).
	$$
	\begin{thm}\label{spectrum.rigidity} Consider  $g_0$ a unit-volume flat metric on $T^n$, $n\geq 2$. There is an open neighborhood $\mathcal U$ of $g_0$ in smooth norm so that if the volume spectrum of $g\in \mathcal U$ strongly coincides with the volume spectrum of $g_0$, then $g$ is isometric to $g_0$.
	\end{thm}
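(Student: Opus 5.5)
The plan follows the outline in Section \ref{remark.section}. We reduce to Corollary \ref{codimension.one.cor} by showing two things: that the strongly coinciding volume spectra force $\vol_g(T^n)=\vol_{g_0}(T^n)=1$, and that they force ${\bf M}(T_P)=||T_P||_{s,g}$ for every rational hyperplane $P$. We then shrink $\mathcal U$ to the neighborhood given by Corollary \ref{codimension.one.cor}, and conclude that $g$ is isometric to $g_0$.

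\emph{Volume.} Take the trivial cover $p=\mathrm{id}$ and apply the Weyl law for the volume spectrum \cite{lmn}:
$$\lim_{k\to\infty}\omega_k(M,g)k^{-1/n}=a(n)\vol_g(M)^{(n-1)/n}.$$
Since $\omega_k(T^n,g)=\omega_k(T^n,g_0)$ for all $k$, this gives $\vol_g(T^n)=1$.

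\emph{Stable norm.} The key step is a statement of the form of Proposition \ref{long.cyclic.width}: the widths of long cyclic covers determine the stable norm on $H_{n-1}$. Fix a primitive class $v\in H^1(T^n,\Z)$ and consider the cyclic covers $p_m:M_m\to T^n$ obtained by unwrapping $m$ times in the direction dual to $v$. Geometrically, $M_m$ is long in one direction and its cross-sections are copies of $PD(v)$. I expect an asymptotic of the form
$$\lim_{m\to\infty}\frac{\omega_1(M_m,p_m^*g)}{?}\;=\;\text{or}\;\;\omega_1(M_m,p_m^*g)\to 2\,||PD(v)||_{s,g}$$
(with $\mathbb Z_2$ coefficients, so the sweepout passes through a pair of parallel minimal hypersurfaces), or possibly that $\omega_k(M_m)$ stabilizes to $2||PD(v)||_{s,g}$ for fixed $k$ as $m\to\infty$.

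The upper bound for this asymptotic comes from an explicit sweepout. Take the region between two lifts of a near-minimizer of $PD(v)$ and slide them apart; each slice has mass at most about $2||PD(v)||_{s,g}$, plus a small interpolation cost.

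The lower bound is the main obstacle. It requires showing that any $1$-sweepout of a very long cover must contain a slice of mass at least about $2||PD(v)||_{s,g}$. The argument I would try first is the following. A sweepout must at some parameter split $M_m$ into two halves of equal volume. When $m$ is large, a cycle bounding half the volume of the long cylinder must essentially separate it transversally, so by a coarea/slicing argument its mass is at least twice the minimal mass in the class $p_m^*$-lift of $PD(v)$. Here one uses Federer's Theorem \ref{thm.Federer}: integral cycles in covers that represent multiples of $v$ have mass at least the stable norm times the multiple.

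Once the asymptotic is established, it is the same for $g$ and for $g_0$. Therefore $||PD(v)||_{s,g}=||PD(v)||_{s,g_0}={\bf M}(T_P)$ for every primitive $v$, where $T_P$ is calibrated. This is exactly the hypothesis of Corollary \ref{codimension.one.cor}. Combined with $\vol_g=1$ from the first step, Corollary \ref{codimension.one.cor} finishes the proof. The case $n=2$ works the same way: there the stable norm on $H_1$ together with the volume gives flatness via the same corollary, and the corollary is stated for all $n\ge 2$.
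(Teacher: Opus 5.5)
Your volume step and the reduction to Corollary \ref{codimension.one.cor} match the paper. Your lower-bound strategy also matches it: a $1$-sweepout passes through a half-volume region, and slicing shows its boundary contains two nearly complete transversal cuts. The paper does this via $\omega_1\geq I_{m,l}$ and Lemma \ref{slicing.argument}.

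\textbf{The gap is the upper bound.} In covers that are long only in the direction transverse to $P$, sliding a $g$-minimizer $\Sigma$ of $[T_P]$ to its translate $\Sigma+q$ costs an amount of order one, and this cost does not decay as the cover gets longer. The zipper of Lemma \ref{path.gmt} with $m=1$ costs a fixed constant $C$. Nothing in your argument makes this small unless the $g$-minimizers in $[T_P]$ foliate $T^n$. For example, if the translates of $\Sigma$ are the only minimizers, then any path in the cover from $\Sigma$ to $\Sigma+q$ must rise a fixed height above ${\bf M}_g(\Sigma)$. For $g$, such a foliation is only available after you know the stable norms agree. So your sweepout gives $\limsup\omega_1\le 2||T_P||_{s,g}+C$, not the claimed limit.

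This is the direction that carries the information. The lower bound alone, compared with the flat value $2{\bf M}(T_P)$, only yields $||T_P||_{s,g}\le{\bf M}(T_P)$. Together with $\vol_g(T^n)=1$, that is satisfied by every nonconstant conformal metric $e^{2u}g_0$ with $\int_{T^n}e^{nu}\,dV=1$, and these are non-flat and arbitrarily close to $g_0$. Indeed, the average over $x$ of $\area_g(x+T_P)$ equals ${\bf M}(T_P)\int_{T^n}e^{(n-1)u}\,dV$, which is $<{\bf M}(T_P)$ by H\"older. So the one-sided inequality cannot feed into Corollary \ref{codimension.one.cor}. Its proof uses ${\bf M}(T_P)\le{\bf M}_g(x+T_P)$ and the foliation coming from differentiability of the stable norm.

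\textbf{The missing idea} is to enlarge the cover along $P$ as well, so that the interpolation cost becomes lower order. The paper sets $\Lambda=P\cap\Z^n$, picks $q$ with $\alpha(q)=1$, and takes $T_{m,l}=\R^n/(m\Lambda\oplus l\Z q)$.
\begin{itemize}
\item It uses a Caffarelli--de la Llave plane-like minimizer $E$ with $E+\Lambda=E$, $E+q\subset E$, and $\partial E$ in a slab of bounded width. This ordering of translates is also what your ``slide them apart'' construction would need.
\item Then $\Sigma_m=\partial E/m\Lambda$ has mass $m^{n-1}||T_P||_{s,g}$.
\item Zipping from $\Sigma_m$ to $\Sigma_m+q$ inside $G_m=(E\setminus(E+q))/m\Lambda$ costs only $Cm^{n-2}$. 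This uses a path of Caccioppoli sets in $T_m$ with perimeter $O(m^{n-2})$.
\item Concatenating translates gives $\omega_1(T_{m,l},g)\le 2m^{n-1}||T_P||_{s,g}+Cm^{n-2}$, uniformly in $l$.
\end{itemize}
Combined with your lower bound for each fixed $m$, this yields Proposition \ref{long.cyclic.width}: $\omega_1(T_{m,l},g)/m^{n-1}$ tends to $2||T_P||_{s,g}$, taking $l\to\infty$ first and then $m\to\infty$. This also resolves your uncertainty about the normalization: the correct quantity is $\omega_1$ divided by $m^{n-1}$, not a limit over covers with fixed cross-section.
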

	\begin{proof}
		Let $\mathcal U$ be the open neighborhood of $g_0$ given by Corollary \ref{codimension.one.cor} and let $g\in \mathcal U$. The Weyl law for the volume spectrum \cite[Theorem 1.1]{lmn} and the coincidence of the spectra on $T^n$ give
		$$
		\vol_g(T^n)=\vol_{g_0}(T^n)=1.
		$$
		Consider $P$ a rational $(n-1)$-plane. Choose the primitive covector $\alpha\in(\Z^n)^*$ such that $P=\ker\alpha$, set $\Lambda=P\cap\Z^n$, and choose $q\in\Z^n$ such that $\alpha(q)=1$. For every $m,l\in\N$ consider the sublattices of $\Z^n$
		$$
		\Lambda_{m,l}=m\Lambda\oplus l\Z q\quad\text{and}\quad\Lambda_m=m\Lambda.
		$$
		Set  $T_{m,l}=\R^n/\Lambda_{m,l}$ and $T_m=P/\Lambda_m$. $T_m$ is an embedded $(n-1)$-torus of $T_{m,l}$ and the projection map from $T_{m,l}$ to $T^n$ is a finite cover. Furthermore $T_{1,1}=T^n$. Here and below, $g$ and $g_0$ on $T_{m,l}$ denote their pullbacks under this covering map.
		
		\begin{prop}\label{long.cyclic.width} We have
			$$
			\lim_{m\to\infty}\liminf_{l\to\infty}\frac{\omega_1(T_{m,l},g)}{m^{n-1}}=\lim_{m\to\infty}\limsup_{l\to\infty}\frac{\omega_1(T_{m,l},g)}{m^{n-1}}=2||T_P||_{s,g}.
			$$
		\end{prop}
		\begin{proof}
			Fix $m\in\N$. We first prove the upper bound. 
			
			{From \cite[Section 8]{caffarelli-lalave}} we find a Caccioppoli set $E\subset\R^n$ and $d=d(P,g)$ such that
			$$
			\begin{aligned}
				&E+y=E\quad\text{for every }y\in\Lambda,\quad E+q\subset E,\quad \partial E\subset \alpha^{-1}(-d,d),\\
				&\text{and}\quad{\bf M}_g(\partial E/\Lambda)=||T_P||_{s,g}.
			\end{aligned}
			$$
			Set $\Sigma_m=\partial E/\Lambda_m$ which projects, for all $l\in\N$, to a cycle in $T_{m,l}$ representing $[T_m]$, and
			\begin{equation}\label{plane.like.mass}
				{\bf M}_g(\Sigma_m)=m^{n-1}||T_P||_{s,g}.
			\end{equation}
			We often identify $\Sigma_m$ and its projection to $T_{m,l}$. 
			In what follows, $C$ denotes a generic constant depending on {$T_P$} and $g$ but not on $m$ or $l$. 
			{\begin{lemm}\label{path.gmt}
					There is a continuous map in the flat topology with no concentration of mass
					$$\Gamma_m:[0,1]\rightarrow \mathcal Z_{n-1}(\R^n/\Lambda_m,\Z_2)$$
					so that
					$$\Gamma_m(0)=\Sigma_m,\quad \Gamma_m(1)=\Sigma_m+q, $$
					\begin{equation}\label{zipper.bound}
						\text{and}\quad \sup_t{\bf M}_g(\Gamma_m(t))\leq {\bf M}_g(\Sigma_m)+Cm^{n-2}.
					\end{equation}
			\end{lemm}}
			\begin{proof}
				Consider an $L^1$-continuous path $t\in[0,1]\mapsto \Theta_m(t)$ of Caccioppoli sets in $T_m$ so that
				\begin{equation}\label{zipper.bound2}
					\Theta_m(0)=\emptyset,\quad\Theta_m(1)=T_m,\quad\text{and}\quad\sup_t{\bf M}_g(\partial\,\Theta_m(t))\leq Cm^{n-2}.
				\end{equation}
				To achieve that, construct such a path on  $T_1$ and transport it to $T_m$ via the map $x\mapsto mx$. 
				
				Set $G_m=(E\setminus(E+q))/\Lambda_m$ and let $\pi:\R^n/\Lambda_m\to T_m$ be the Euclidean projection parallel to $q$. Set 
				$$
				W_t=G_m\cap\pi^{-1}(\Theta_m(t)),\quad \Gamma_m(t)=\partial((E/\Lambda_m)\setminus W_t),\quad 0\leq t\leq1.
				$$
				We have $\Gamma_m(0)=\Sigma_m$, $\Gamma_m(1)=\Sigma_m+q$, and the decomposition, as mod $2$ currents,
				$$\Gamma_m(t)=(\Sigma_m+q)\llcorner \pi^{-1}(\Theta_m(t))+\Sigma_m\llcorner \pi^{-1}(T_m\setminus\Theta_m(t))+\partial \pi^{-1}(\Theta_m(t))\llcorner G_m.$$
				Translation by $q$ is a $g$-isometry and $\pi(x+q)=\pi(x)$.  Thus  
				$${\bf M}_g((\Sigma_m+q)\llcorner \pi^{-1}(\Theta_m(t)))={\bf M}_g(\Sigma_m\llcorner \pi^{-1}(\Theta_m(t))).$$ Hence the mass of the first two terms is at most ${\bf M}_g(\Sigma_m)$. For the third term we have that
				$${\bf M}_g(\partial \pi^{-1}(\Theta_m(t))\llcorner G_m)\leq Cm^{n-2}.$$
				Indeed, $G_m$ is contained in $\alpha^{-1}([-(d+1),d+1])/\Lambda_m$ and so  $\partial \pi^{-1}(\Theta_m(t))\llcorner G_m$ is contained in the image of $\partial^*\Theta_m(t)\times[-(d+1),d+1]$ under the map $(x,s)\mapsto x+sq$. This map has uniformly bounded $(n-1)$-Jacobian and so the estimate follows from  \eqref{zipper.bound2}. The map $t\mapsto \Gamma_m(t)$ is continuous in the flat topology and can be made to have no concentration of mass. Thus \eqref{zipper.bound} follows.
			\end{proof}
			{Translate the path given by Lemma \ref{path.gmt}} successively by $q,\ldots,(l-1)q$ and project it to $T_{m,l}$. The concatenation of these paths gives  a  loop  
			$$\tilde \Phi:\R/(l\Z)\rightarrow \mathcal Z_{n-1}(T_{m,l},\Z_2)$$
			made of cycles homologous mod 2 to $T_m$. Consider the loop 
			{$$ \Phi:\R/(l\Z)\rightarrow \mathcal Z_{n-1}(T_{m,l},\Z_2), \quad \Phi(\theta)=\tilde\Phi(\theta)+\Sigma_m$$}
			made of homologically trivial cycles which is a $1$-sweepout without concentration of mass. We have from  \eqref{plane.like.mass} and \eqref{zipper.bound} that
			$$
			\omega_1(T_{m,l},g)\leq\sup_{\theta}{\bf M}_g(\Phi(\theta))\leq 2m^{n-1}||T_P||_{s,g}+Cm^{n-2}.
			$$
			Thus
			$$\lim_{m\to\infty}\limsup_{l\to\infty}\frac{\omega_1(T_{m,l},g)}{m^{n-1}}\leq 2||T_P||_{s,g}.$$
			We now prove the lower bound. Set
			$$I_{m,l}=\inf \{{\bf M}_g(\partial \Omega):\Omega\text{ a Caccioppoli set with }\vol_g(\Omega)=\vol_g(T_{m,l})/2\}.$$
			It follows from Claim 5.2 and Remark 5.4 of \cite{mnindex} that
			$$
			\omega_1(T_{m,l},g)\geq I_{m,l}.
			$$
			Hence to prove the lower bound, it suffices to show that
			\begin{equation}\label{long.isoperimetric}
				\liminf_{l\to\infty}I_{m,l}\geq 2m^{n-1}||T_P||_{s,g}.
			\end{equation}
			Fix $m\in\N$ and choose, for each $l$, a nearly optimal  Caccioppoli set $\Omega_l$ for the constant $I_{m,l}$.  The covector $\alpha\in (\Z^n)^*$ induces a map, still denoted by $\alpha$, from $T_{m,l}$ to $\R/(l\Z)$.  We abuse notation and denote the sliced current $\langle \Omega_l, \alpha,t\rangle$ simply by $\Omega_l\cap \alpha^{-1}(t)$.
			
			{The next lemma says that, provided $l$ is large, $\Omega_l$ will almost contain some torus $\alpha^{-1}(t_1)$ and almost miss some other torus $\alpha^{-1}(t_2)$.}
			\begin{lemm}\label{slicing.argument} Given $\varepsilon>0$, we can find for all $l$ large an interval $I\subset \R/(l\Z)$ with endpoints $t_1,t_2$ so that the sliced currents $\Omega_l\cap \alpha^{-1}(t_i)$ are well defined and 
				$${\bf M}_g(\Omega_l\cap \alpha^{-1}(t_1))\geq {\bf M}_g(\alpha^{-1}(t_1))-\varepsilon\quad\text{and}\quad {\bf M}_g(\Omega_l\cap \alpha^{-1}(t_2))\leq \varepsilon$$
			\end{lemm}
			\begin{proof}
				We use $C_m$ to denote a generic constant allowed to depend on $T_P$, $g$, $m$, but not on $l$. 
				
				The upper bound that was proven gives $I_{m,l}\leq C_m$ and so
				$
				{\bf M}_g(\partial\Omega_l)\leq C_m.
				$
				From Lemma 4.5 in \cite{leon} we have that the slicing of $\Omega_l$ and its complement $\Omega_l^c$ is well defined for almost all $t$ and
				\begin{equation}\label{coarea}
					\int_{\R/(l\Z)}{\bf M}_g\bigl(\partial(\Omega_l\cap\alpha^{-1}(t))\bigr)\,dt\leq C_m{\bf M}_g(\partial\Omega_l)\leq C_m.
				\end{equation}
				Let $I_1$ and $I_2$  be the subsets of those $t\in \R/(l\Z)$ so that
				$${\bf M}_g( \Omega_l^c\cap\alpha^{-1}(t))>\varepsilon\quad\text{and}\quad {\bf M}_g(\Omega_l\cap\alpha^{-1}(t))>\varepsilon,$$
				respectively.
				We want to show that neither $I_1$ nor $I_2$  has full measure in $\R/(l\Z)$.  Suppose that $I_1$ has full measure and consider $J=I_1\cap I_2$.
				
				There is a uniform bound (depending on $g$, $T_P$, and $m$) on the isoperimetric constant of all $\alpha^{-1}(t)$. Thus if  $t\in J$ we have
				{\begin{align*}
						{\bf M}_g\bigl(\partial(\Omega_l\cap\alpha^{-1}(t))\bigr)& \geq C_m\min\{{\bf M}_g(\Omega_l^c\cap\alpha^{-1}(t)),\,{\bf M}_g(\Omega_l\cap\alpha^{-1}(t))
						\}^{\frac{n-2}{n-1}}\\
						&\geq C_m\varepsilon^{\frac{n-2}{n-1}}.
				\end{align*}}
				Hence we obtain from \eqref{coarea} that $|J|\leq C_m\varepsilon^{\frac{2-n}{n-1}}$. Moreover, we have
				$$
				\vol_g(\Omega_l)=\int_{\R/(l\Z)}\int_{\Omega_l\cap\alpha^{-1}(t)}\frac{1}{|\nabla_g\alpha|}\,dA_g\,dt.
				$$
				Thus, using  that $\R/(l\Z)$ equals $J\cup I_1\setminus J$ up to a set of measure zero, we obtain
				$$
				\vol_g(\Omega_l){=} \int_{J}\int_{\Omega_l\cap\alpha^{-1}(t)}\frac{1}{|\nabla_g\alpha|}\,dA_g\,dt+\int_{I_1\setminus J}\int_{\Omega_l\cap\alpha^{-1}(t)}\frac{1}{|\nabla_g\alpha|}\,dA_g\,dt.
				$$
				The first term on the right is bounded by $\vol_g(\alpha^{-1}(J))$ and  thus bounded by $C_m|J|$. The term $|\nabla_g\alpha|$ is bounded above and below and so
				$$\int_{I_1\setminus J}\int_{\Omega_l\cap\alpha^{-1}(t)}\frac{1}{|\nabla_g\alpha|}\,dA_g\,dt\leq C_m l\varepsilon  $$
				Thus, {using $|J|\leq C_m\varepsilon^{\frac{2-n}{n-1}}$}, we see that
				$$\frac{l}{2}\vol_g(T_{m,1})=\vol_g(\Omega_l)\leq C_m\varepsilon^{\frac{2-n}{n-1}}+C_m l\varepsilon.$$
				Replace $\varepsilon$ by a smaller number if necessary so that $C_m \varepsilon<\frac 14 \vol_g(T_{m,1})$. Hence the inequality above fails to hold for all $l$ sufficiently large. Thus $I_1$ does not have full measure provided  $l$ is suitably large. The same argument applies to $I_2$ provided we replace $\Omega_l$ by $\Omega_l^c$.
			\end{proof}
			Consider $l$ large so that Lemma \ref{slicing.argument} applies. After choosing a suitable orientation on the segment $I\subset \R/(l\Z)$, Lemma 4.5 in \cite{leon}, p.182, gives
			$$
			\partial(\alpha^{-1}(I)\llcorner \Omega_l)=\Omega_l\cap\alpha^{-1}(t_2)-\Omega_l\cap\alpha^{-1}(t_1)+\partial \Omega_l\llcorner \alpha^{-1}(I).
			$$
			Set $E=\alpha^{-1}(t_1)\llcorner \Omega_l^c$.  Then $\alpha^{-1}(t_1)=\Omega_l\cap\alpha^{-1}(t_1)+E$ and so
			$$
			\partial \Omega_l\llcorner \alpha^{-1}(I)+E+\Omega_l\cap\alpha^{-1}(t_2)=\alpha^{-1}(t_1)+\partial(\alpha^{-1}(I)\llcorner \Omega_l).
			$$
			The cycle on the right is homologous to $\Sigma_m$ and its projection to $T^n$ represents $m^{n-1}[T_P]$.  So 
			$${\bf  M}_g(\partial \Omega_l\llcorner \alpha^{-1}(I)+E+\Omega_l\cap\alpha^{-1}(t_2))\geq {\bf M}_g(\Sigma_m).$$
			We have from  Lemma \ref{slicing.argument} that $E+\Omega_l\cap\alpha^{-1}(t_2)$ has mass $\leq 2\varepsilon$ and thus
			$$
			{\bf M}_g(\partial \Omega_l\llcorner \alpha^{-1}(I)) \geq {\bf M}_g(\Sigma_m)-2\varepsilon.
			$$
			We can repeat the argument above replacing $I$ by its complement $I^c$ in $\R/(l\Z)$ and obtain
			$$
			{\bf M}_g(\partial \Omega_l\llcorner \alpha^{-1}(I^c)) \geq {\bf M}_g(\Sigma_m)-2\varepsilon.
			$$
			These two inequalities show that for all $l$ sufficiently large
			$${\bf M}_g(\partial \Omega_l)\geq{\bf M}_g(\partial \Omega_l\llcorner \alpha^{-1}(I^c))+{\bf M}_g(\partial \Omega_l\llcorner \alpha^{-1}(I)) \geq  2{\bf M}_g(\Sigma_m)-4\varepsilon.
			$$
			Therefore
			$$
			\liminf_{l\to\infty}I_{m,l}\geq 2{\bf M}_g(\Sigma_m)-4\varepsilon=2m^{n-1}||T_P||_{s,g}-4\varepsilon.
			$$
			Making $\varepsilon\to 0$ proves \eqref{long.isoperimetric}.
		\end{proof}
		
		The hypothesis gives
		$$
		\omega_1(T_{m,l},g)=\omega_1(T_{m,l},g_0)
		$$
		for every $m,l\in\N$. Applying Proposition \ref{long.cyclic.width} to both metrics gives that for every rational hyperplane $P$ 
		$$
		||T_P||_{s,g}=||T_P||_{s,g_0}.
		$$
		Corollary \ref{codimension.one.cor} shows that $g$ is isometric to $g_0$.
	\end{proof}
	
	\appendix

	\section{{Stable norms and auxiliary formulas}}
	\subsection{Stable norm}\label{stable.norm.appendix}
	{Fix a unit-volume Euclidean metric $g_0$ on $T^n$. Given a closed $k$-form $\alpha$ and $x\in T^n$, the {\em comass} at $x$ is $$|\alpha_x|=\max\{|\alpha_x(v_1,\ldots,v_k)|:v_i\in T_xT^n, |v_i|\leq 1\}$$ and the {\em comass}  is
		$$|\alpha|^*=\max_{x\in T^n}|\alpha_x|.$$
		We obtain a norm on $H^d(T^n,\R)$ as
		$$||\omega||^*=\inf\{|\alpha|^*:\alpha\in [\omega]\}.$$
		The following was proven in \cite[4.10]{Federer74}; see also \cite[4.35]{Gro99}.
		\begin{prop}\label{comass.dual} The comass norm is dual to the stable norm. More precisely, for every $w\in H_d(T^n,\R)$,
			$$||w||_s=\sup\{|\langle w,\omega\rangle|: \omega\in H^d(M,\R), ||\omega||^* \leq 1 \}.$$
		\end{prop}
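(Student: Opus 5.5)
The plan is to prove the two inequalities separately. The easy direction comes from Stokes' theorem. The reverse direction uses the translation invariance of the flat metric $g_0$ to reduce everything to finite-dimensional multilinear algebra on $\Lambda_d\R^n\cong H_d(T^n,\R)$.

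\emph{Step 1 (sup $\leq$ stable norm).} Let $\omega\in H^d(T^n,\R)$, let $\alpha\in\omega$ be a closed form, and let $\sum_i r_i\sigma_i$ be a Lipschitz chain representing $w$. Then
$$|\langle w,\omega\rangle|=\Big|\sum_i r_i\int_{\sigma_i}\alpha\Big|\leq\sum_i|r_i|\,\vol(\sigma_i)\,|\alpha|^*.$$
Taking the infimum over chains and over representatives $\alpha$ gives $|\langle w,\omega\rangle|\leq ||w||_s\,||\omega||^*$, which is the desired inequality.

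\emph{Step 2 (identify both norms on the flat torus).} Identify $H^d(T^n,\R)$ with the parallel forms $\Lambda^d(\R^n)^*$ and $H_d(T^n,\R)$ with $\Lambda_d\R^n$.

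For the comass norm: given any closed $\alpha\in\omega$, average its translates $\tau_y^*\alpha$ over $y\in T^n$. Each translate is cohomologous to $\alpha$, and translations are $g_0$-isometries, so the average is the parallel representative of $\omega$ and its comass is at most $|\alpha|^*$. Hence $||\omega||^*$ equals the pointwise comass of the parallel representative.

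For the stable norm: I claim $||w||_s$ equals the mass norm $\|w\|_{\rm mass}$ of the corresponding $d$-vector. By definition,
$$\|w\|_{\rm mass}=\inf\Big\{\sum_j|c_j|:\ w=\sum_j c_j\xi_j,\ \xi_j \text{ simple unit } d\text{-vectors}\Big\}.$$
The inequality $\geq$ follows from Step 1 combined with the classical duality of mass and comass on $\Lambda_d\R^n$ (Federer 1.8.1). For $\leq$, first treat a simple $d$-vector $\xi$ spanning a rational plane $P$: it is a positive multiple of $[T_P]$, and $T_P$ realizes mass equal to its $d$-volume. By homogeneity, $||\xi||_s\leq|\xi|$. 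Rational simple $d$-vectors are dense in the cone of simple $d$-vectors, and $||\cdot||_s$ is a norm on a finite-dimensional space, hence continuous. Therefore $||\xi||_s\leq|\xi|$ for every simple $\xi$. The triangle inequality then gives $||w||_s\leq\|w\|_{\rm mass}$.

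\emph{Step 3 (conclude).} By Step 2, the statement reduces to the finite-dimensional duality between the mass norm on $\Lambda_d\R^n$ and the comass norm on $\Lambda^d(\R^n)^*$. This duality holds because the unit mass ball is the convex hull of the simple unit $d$-vectors. A Hahn--Banach supporting functional at $w/\|w\|_{\rm mass}$ is therefore a covector bounded by $1$ on all simple unit vectors, i.e. of comass $\leq1$, and it attains $\|w\|_{\rm mass}$ on $w$.

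The main obstacle is the $\leq$ half of Step 2: showing that arbitrary, possibly irrational, simple $d$-vectors have stable norm at most their length. My plan handles this by density of rational planes together with continuity of the stable norm; care is needed only to check that the positive multiple relating $\xi$ to $[T_P]$ matches the volume of $T_P$. For a general Riemannian metric this simplification is unavailable, and one would instead follow Federer's argument: apply Hahn--Banach on normal currents, then smooth the resulting bounded measurable closed form by convolution.
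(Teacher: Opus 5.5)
Your argument is correct for the statement as written, with both norms computed for the flat metric $g_0$. It takes a genuinely different route from the paper, which gives no proof and simply cites Federer [4.10] and Gromov [4.35].

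The cited argument works on any closed Riemannian manifold. Roughly, Hahn--Banach on the space of real flat (or normal) cycles produces a closed flat cochain, essentially a bounded measurable closed form of comass at most $1$, that evaluates to $||w||_s$ on $w$. This is then smoothed to closed forms of comass at most $1+\varepsilon$.

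You instead use flatness to reduce everything to linear algebra. The calibrated subtori $T_P$, homogeneity, density of rational planes and continuity give $||\xi||_s\leq|\xi|$ for simple $\xi$, and the finite-dimensional mass/comass duality on $\Lambda_d\R^n$ does the rest. This is elementary and self-contained, and it identifies $||\cdot||_s$ and $||\cdot||^*$ explicitly as Federer's mass and comass norms. Strictly, some of your steps are not needed: Step 1, the density bound, and linear duality applied to a parallel form (whose class trivially has $||\cdot||^*$ at most its pointwise comass) already close the chain $\|w\|_{\mathrm{mass}}\leq\sup\leq||w||_s\leq\|w\|_{\mathrm{mass}}$. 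So the averaging step and the separate proof of $\geq$ in Step 2 give extra information rather than being required.

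The price is generality, and that matters in this paper. In the very next proposition the duality is applied to an arbitrary metric $g$, to conclude that the comass norms of $g$ and $g_0$ on $H^1(T^n,\R)$ agree. For non-flat $g$, translations are not $g$-isometries and there are no calibrated subtori. That use therefore really does require the Federer-type argument you mention at the end.
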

		The following simple estimate follows from \cite[Proposition 4.3]{HK26}.
		\begin{lemm}\label{forms} Let $E$ be a Euclidean $n$-space
			$\alpha_1,\ldots,\alpha_n\in E^*$. Then, with the comass norm,
			$$|\alpha_1\wedge\cdots\wedge\alpha_n|\leq |\alpha_1|\cdots |\alpha_n|.$$
		\end{lemm}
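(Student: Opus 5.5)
We prove the inequality $|\alpha_1\wedge\cdots\wedge\alpha_n|\leq|\alpha_1|\cdots|\alpha_n|$ directly. Two standard facts do the work: for a top-degree form the comass is just the absolute value of its single coefficient, and for a covector the comass is its Euclidean length. The claim then reduces to Hadamard's inequality.

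First I reduce the comass of the $n$-form to a determinant. Fix an orthonormal basis $e_1,\ldots,e_n$ of $E$ with dual basis $e_1^*,\ldots,e_n^*$. Then $\omega=\alpha_1\wedge\cdots\wedge\alpha_n=\det(A)\,e_1^*\wedge\cdots\wedge e_n^*$, where $A$ is the matrix with $A_{ij}=\alpha_i(e_j)$. For vectors $v_1,\ldots,v_n$ with $|v_i|\leq1$,
$$
|\omega(v_1,\ldots,v_n)|=|\det A|\,|\det(v_1,\ldots,v_n)|\leq|\det A|,
$$
by Hadamard's inequality applied to the columns $v_i$. Equality holds at $v_i=e_i$, so the comass of $\omega$ is exactly $|\det A|$.

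Next I identify the comass of each covector. For a $1$-form, $|\alpha_i|=\max_{|v|\leq1}|\alpha_i(v)|$. By Cauchy--Schwarz this is the Euclidean norm of the $i$-th row $(\alpha_i(e_1),\ldots,\alpha_i(e_n))$ of $A$.

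Finally, Hadamard's inequality, now applied to the rows of $A$, gives
$$
|\det A|\leq\prod_{i=1}^n|\text{row}_i(A)|=\prod_{i=1}^n|\alpha_i|,
$$
which is the claim. Hadamard's inequality can be cited, or proved via Gram--Schmidt: write $A=QR$ with $Q$ orthogonal; then $|\det A|=\prod_i|R_{ii}|$, and each $|R_{ii}|$ is at most the length of the corresponding row (or column). I do not see a real obstacle here. The only points to check are that the comass of a top-degree form is the absolute value of its coefficient, and that the comass of a covector is its Euclidean norm, and both were verified above.
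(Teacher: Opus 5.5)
Your proof is correct, and it takes a different route from the paper. The paper gives no argument of its own; it deduces the lemma from Proposition~4.3 of \cite{HK26}, a more general comass estimate from the literature. You instead use that $\alpha_1\wedge\cdots\wedge\alpha_n$ has top degree, and your argument has three steps:
\begin{itemize}
\item The comass of the $n$-form can be computed exactly: it equals $|\det A|$ with $A_{ij}=\alpha_i(e_j)$, by Hadamard's inequality applied to the columns $v_i$.
\item The comass of each $\alpha_i$ is the length of the $i$-th row of $A$, by Cauchy--Schwarz.
\item The claim is then Hadamard's inequality applied to the rows of $A$.
\end{itemize}

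The cited result presumably also covers cases your argument cannot, such as wedge products of intermediate degree. There the comass is not a single coefficient and no determinant formula is available. Only the top-degree case is needed here, though.

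Your route gives a self-contained proof. It also gives, as a by-product, the fact that the comass of an $n$-form equals the absolute value of its coefficient against the volume form of an orthonormal basis. That is exactly what the subsequent proposition uses when it writes $\left|\int_{T^n}\alpha_1\wedge\cdots\wedge\alpha_n\right|\leq\int_{T^n}|\alpha_1\wedge\cdots\wedge\alpha_n|\,d\vol_g$.

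One small slip is in your sketch of Hadamard's inequality. With $A=QR$ one gets $|R_{ii}|\leq|\mathrm{col}_i(R)|=|\mathrm{col}_i(A)|$, which controls columns, not rows. For rows, apply this to $A^{T}$ and use $\det A^{T}=\det A$.
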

		
		The next result is well known to experts.
		\begin{prop} Consider a metric $g$  on $T^n$ so that 
			$$||w||_{s,g}=||w||_{s}\quad\text{for all }w\in H_1(T^n,\R).$$
			Then $\vol(T^n,g)\geq 1$ with equality if and only if $g$ is isometric to $g_0$.
		\end{prop}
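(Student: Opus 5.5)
We will use the comass duality of Proposition \ref{comass.dual} with $d=1$, together with Lemma \ref{forms}. Fix $g_0$-orthonormal parallel $1$-forms; more precisely, choose a basis $v_1,\dots,v_n$ of $H^1(T^n,\Z)$. Let $\omega_i$ denote the parallel (constant-coefficient) representatives, so that $\int_{T^n}\omega_1\wedge\cdots\wedge\omega_n=\pm1$, using $H^n(T^n,\Z)$ and the unit volume of $g_0$. The point is that the cohomological pairing $\int\omega_1\wedge\dots\wedge\omega_n$ is independent of the representatives chosen.

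First we compute the dual norms. Since the stable norms of $g$ and $g_0$ agree on $H_1$, Proposition \ref{comass.dual} (applied to both metrics) shows that the dual comass norms on $H^1(T^n,\R)$ agree: $||\omega||^*_g=||\omega||^*_{g_0}$. For $g_0$, the comass of a parallel $1$-form equals its pointwise norm, and the parallel representative realizes the infimum. We now choose an orthonormal-like basis. Rather than integral classes, we can use arbitrary real classes: take $g_0$-orthonormal parallel $1$-forms $\eta_1,\dots,\eta_n$, so that $\eta_1\wedge\cdots\wedge\eta_n=dV_{g_0}$ and $\int dV_{g_0}=1$. Each satisfies $||[\eta_i]||^*_g=||[\eta_i]||^*_{g_0}=1$. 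Hence, for any $\varepsilon>0$, there are closed forms $\alpha_i\in[\eta_i]$ with $\sup_x|\alpha_i|_g\le1+\varepsilon$. Here $|\cdot|_g$ is the $g$-comass, which for $1$-forms is the $g$-norm.

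Next we apply Lemma \ref{forms} pointwise with the Euclidean structure $g_x$. This gives $|\alpha_1\wedge\dots\wedge\alpha_n|_g\le(1+\varepsilon)^n$, i.e. $\alpha_1\wedge\dots\wedge\alpha_n\le(1+\varepsilon)^n dV_g$ pointwise. Integrating, and using that the wedge of closed forms has cohomology class $[\eta_1]\cup\dots\cup[\eta_n]$, we obtain
\[1=\int_{T^n}\eta_1\wedge\cdots\wedge\eta_n=\int_{T^n}\alpha_1\wedge\cdots\wedge\alpha_n\le(1+\varepsilon)^n\vol_g(T^n).\]
Letting $\varepsilon\to0$ gives $\vol(T^n,g)\ge1$.

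The equality case is the main obstacle. If $\vol_g=1$, we want to pass to a limit of the $\alpha_i$. Write $\alpha_i=\eta_i+df_i$ with $|d f_i|$ bounded uniformly in $L^\infty$. By Arzelà--Ascoli (weak-$*$ for $df_i$), we get Lipschitz $f_i$ with $|\eta_i+df_i|_g\le1$ a.e. Weak-$*$ convergence preserves the integral of the Jacobian determinant, since it is a null Lagrangian: $\int(\eta_1+df_1)\wedge\cdots=1$ by Stokes. Thus equality holds a.e. in Lemma \ref{forms}, which forces $\{\alpha_i\}$ to be $g$-orthonormal a.e. Then the map $F=(x_i+f_i)$, lifted to $\R^n\to\R^n$, is a Lipschitz map whose differential is an isometry from $g$ to the Euclidean metric a.e. The remaining steps are as follows. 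We show $F$ is $1$-Lipschitz from $(\R^n,g)$ to the Euclidean space, which follows from $|dF|\le1$ along curves. We show it is equivariant and of degree one. We conclude that it is distance-nonincreasing with volume preserved, hence an isometry; for instance, the inverse is also $1$-Lipschitz by a volume/ball-counting argument, or one uses that a.e.\ isometric differential plus degree one forces a local isometry in the metric sense. This gives a Riemannian isometry, and by smoothness of $g$ it is smooth by Myers--Steenrod/Calabi--Hartman. We expect the delicate point to be the rigidity upgrade from ``a.e.\ orthonormal Lipschitz coframe'' to an actual isometry. We would handle this by showing that $F$ preserves lengths of $g$-geodesics in both directions via the equality in the volume comparison of small balls.
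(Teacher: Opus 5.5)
\textbf{Gap in the equality case.} Your proof of $\vol(T^n,g)\ge 1$ matches the paper's: comass duality, near-optimal closed representatives $\alpha_i$ of the classes of a $g_0$-orthonormal parallel coframe, the pointwise Hadamard bound, and cohomological invariance of $\int\alpha_1\wedge\cdots\wedge\alpha_n$.

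For equality the paper does not argue by hand. Equal volume and equal stable norm on $H_1$ give equal asymptotic volume. Burago--Ivanov's Theorem 1(b) then makes $g$ flat, and flat metrics with the same $H_1$ stable norm are isometric.

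You instead try to reprove that rigidity, and your steps are sound up to the last one. The limit gives Lipschitz $F_i$ on $\R^n$ such that $dF_1,\dots,dF_n$ is a $g$-orthonormal coframe almost everywhere. The equality also gives positive orientation, which you need and should state.

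The gap is the final step. For a Lipschitz map, an a.e.\ isometric differential does not make the map an isometry. Folding maps such as $(x_1,x')\mapsto(|x_1|,x')$ are $1$-Lipschitz with a.e.\ orthogonal differential. So both the orientation and some regularity mechanism must enter. The phrases you invoke --- ``distance-nonincreasing with volume preserved, hence an isometry'', ``the inverse is $1$-Lipschitz by a ball-counting argument'', ``equality in the volume comparison of small balls'' --- restate a Reshetnyak/Liouville-type rigidity theorem for Lipschitz maps rather than prove it. You also give no indication of how injectivity of $F$ or the reverse distance bound would follow.

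\textbf{How to close it.} The missing step can be supplied cheaply. Since $dF_1,\dots,dF_n$ is a positively oriented $g$-orthonormal coframe a.e., we have $*_g dF_i=(-1)^{i-1}dF_1\wedge\cdots\wedge\widehat{dF_i}\wedge\cdots\wedge dF_n$ a.e. Wedge products of differentials of Lipschitz functions are weakly closed (mollify and use Stokes). Hence $d*_g dF_i=0$ weakly, so each $F_i$ is a weak solution of $\Delta_g F_i=0$. Because $g$ is smooth, $F_i$ is smooth, and a.e.\ orthonormality of the $dF_i$ then holds everywhere by continuity.

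So $g=\sum_i dF_i^2$ is flat and $F$ is a local isometry from the complete $(\R^n,g)$ to Euclidean space, hence a global isometry. Since $F(x+k)=F(x)+(\eta_1(k),\dots,\eta_n(k))$ for $k\in\Z^n$, $F$ descends to an isometry $(T^n,g)\to(T^n,g_0)$.

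With this addition your argument is complete and self-contained. The paper's route is shorter but rests on the Burago--Ivanov theorem.
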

		\begin{proof}
			Choose a $g_0$-parallel orthonormal coframe $\eta_1,\ldots,\eta_n$ and orient
			$T^n$ so that
			$$\int_{T^n}\eta_1\wedge\cdots\wedge\eta_n=\vol(T^n,g_0)=1.$$
			Set $\omega_i=[\eta_i]\in H^1(T^n,\R)$. From Proposition \ref{comass.dual}  the norm
			$||\cdot||^*$ on $H^1(T^n,\R)$ is dual to the stable norm on
			$H_1(T^n,\R)$. Hence the hypothesis implies that the comass norms induced by
			$g$ and $g_0$ on $H^1(T^n,\R)$ are the same. Since
			$||\omega_i||^*=1$, we have
			$$||\omega_i||^*_{g}=1\quad\text{for every }i.$$
			
			Let $\varepsilon>0$. For each $i$ we can choose a smooth closed one-form with
			$[\alpha_i]=\omega_i$ such that
			$$|\alpha_i|^*_g\leq 1+\varepsilon.$$
			Since the $\alpha_i$ and the $\eta_i$ represent the same cohomology classes,
			$$\int_{T^n}\alpha_1\wedge\cdots\wedge\alpha_n
			=\int_{T^n}\eta_1\wedge\cdots\wedge\eta_n=1.$$
			Using Lemma \ref{forms} pointwise we have
			\begin{align*}
				1&=\left|\int_{T^n}\alpha_1\wedge\cdots\wedge\alpha_n\right|
				\leq \int_{T^n}|\alpha_1\wedge\cdots\wedge\alpha_n|\,d\vol_g\\
				&\leq \int_{T^n}|\alpha_1|\cdots |\alpha_n|\,d\vol_g\leq (1+\varepsilon)^n\vol(T^n,g).
			\end{align*}
			Letting $\varepsilon\to 0$ gives $\vol(T^n,g)\geq 1$.
			If equality holds, then \cite[Proposition~8.5.18]{burago-ivanov-book} implies that $g$ and $g_0$ have the same asymptotic volume. Thus \cite[Theorem~1(b)]{burago-ivanov-gafapaper} implies that $g$ is flat.  Non-isometric flat metrics have distinct marked length spectrum and so $g$ is isometric to $g_0$.
	\end{proof}}

	\subsection{A spherical integral identity}
	{
		
		\begin{lemm}\label{psi.average.identity}
			Let $e\in S^{n-1}$ and let $S$ be a symmetric, possibly complex-valued, $2$-tensor on $\R^n$ satisfying $S(e,\cdot)=0$. Consider $\Psi$ as defined in \eqref{psi.definition}.
			Then
			$$
			\int_{S^{n-1}}\Psi(e,S)(\nu)d\nu=\frac{|\tr S|^2+2|S|^2}{4n(n+1)(n+2)}.
			$$
		\end{lemm}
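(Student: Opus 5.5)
The plan is to reduce the spherical average to a product of two one-dimensional moment computations, using the decomposition already introduced in \eqref{integration.grassmanian}. Write $\nu=\theta e+\sqrt{1-\theta^2}\,\omega$ with $\theta=e\cdot\nu\in[-1,1]$ and $\omega\in S^{n-1}\cap e^{\perp}\cong S^{n-2}$. Since $S(e,\cdot)=0$, we have $S(\nu,\nu)=(1-\theta^2)S(\omega,\omega)$. Hence, for $\nu\neq\pm e$,
$$
\Psi(e,S,\nu)=\frac14\,\theta^2(1-\theta^2)\,|S(\omega,\omega)|^2 .
$$
This expression also vanishes at $\nu=\pm e$, consistent with the definition \eqref{psi.definition}.

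The key structural fact is that, under the normalized uniform measure $d\nu$ on $S^{n-1}$, the variables $\theta$ and $\omega$ are independent. Here $\theta$ has density proportional to $(1-\theta^2)^{(n-3)/2}$, and $\omega$ is uniformly distributed on the great sphere $S^{n-2}=S^{n-1}\cap e^\perp$. Therefore
$$
\int_{S^{n-1}}\Psi(e,S)\,d\nu=\frac14\,\mathbb E\bigl[\theta^2(1-\theta^2)\bigr]\cdot \fint_{S^{n-2}}|S(\omega,\omega)|^2\,d\omega .
$$

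For the first factor, $\theta=\nu\cdot e$ is a single coordinate of a uniform point on $S^{n-1}$, so the standard moments give $\mathbb E[\theta^2]=\frac1n$ and $\mathbb E[\theta^4]=\frac{3}{n(n+2)}$. Hence
$$
\mathbb E\bigl[\theta^2(1-\theta^2)\bigr]=\frac1n-\frac{3}{n(n+2)}=\frac{n-1}{n(n+2)}.
$$

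For the second factor, regard $S$ as a symmetric tensor on $E=e^\perp$, of dimension $m=n-1$. Because $S(e,\cdot)=0$, its trace and norm on $E$ agree with $\tr S$ and $|S|$ on $\R^n$. For a real symmetric $S$ on $\R^m$ one has the standard identity
$$
\fint_{S^{m-1}}S(\omega,\omega)^2\,d\omega=\frac{(\tr S)^2+2|S|^2}{m(m+2)}.
$$
This is the case $d=1$ of \eqref{integration.formula}, or can be checked directly by diagonalizing $S$ and using the moments $\mathbb E[\omega_i^4]=3/(m(m+2))$ and $\mathbb E[\omega_i^2\omega_j^2]=1/(m(m+2))$ for $i\neq j$.

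For complex $S=A+iB$ with $A,B$ real symmetric, $|S(\omega,\omega)|^2=A(\omega,\omega)^2+B(\omega,\omega)^2$. Likewise $|\tr S|^2=(\tr A)^2+(\tr B)^2$ and $|S|^2=|A|^2+|B|^2$ for the Hermitian norm. So the identity extends verbatim to complex $S$, giving $\frac{|\tr S|^2+2|S|^2}{(n-1)(n+1)}$.

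Multiplying the two factors yields
$$
\frac14\cdot\frac{n-1}{n(n+2)}\cdot\frac{|\tr S|^2+2|S|^2}{(n-1)(n+1)}=\frac{|\tr S|^2+2|S|^2}{4n(n+1)(n+2)},
$$
as claimed.

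The only step needing real care is the independence and product structure of $d\nu$ in the coordinates $(\theta,\omega)$. I would justify it by the invariance of $d\nu$ under the subgroup $O(e^\perp)$ of rotations fixing $e$: conditional on $\theta$, the direction $\omega$ is uniform on $S^{n-2}$. The case $n=2$ needs a separate remark, since then $S^{n-2}$ consists of two points and $S|_{e^\perp}$ is a scalar $s$. In that case $|S(\omega,\omega)|^2=|s|^2$ and $\frac{|\tr S|^2+2|S|^2}{(n-1)(n+1)}=\frac{3|s|^2}{3}=|s|^2$, so the formula still holds.
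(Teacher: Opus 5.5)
Your proposal is correct and follows essentially the same route as the paper. The paper also writes $\nu=\sin t\,e+\cos t\,w$ with $w\in e^\perp$. It factors the integral as $\frac14\fint_{S^{n-1}}x_e^2(1-x_e^2)=\frac{n-1}{4n(n+2)}$ times the average of $|S(w,w)|^2$ over $S^{n-1}\cap e^\perp$, and evaluates that average via the Bodmann--Ehler--Graf formula as $\frac{|\tr S|^2+2|S|^2}{(n-1)(n+1)}$. The paper leaves three details implicit that you supply explicitly: the direct moment check, the reduction of complex $S$ to its real and imaginary parts, and the separate case $n=2$.
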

		
		\begin{proof}
			Denote by $dV_n$ the standard volume form on the unit $n$-sphere with total volume $\omega_n$ and $x_e=\langle x,e\rangle$. We have for $w\in S^{n-1}\cap e^{\bot}=\{x_e=0\}$ $$\Psi(e,S)(\sin te+\cos t w)=\sin^2 t\cos^2t|S(w,w)|^2/4$$ and so
			\begin{multline*}
				\int_{S^{n-1}}\Psi(e,S)(\nu)d\nu\\
				=\frac{1}{\omega_{n-1}}\int_{-\pi/2}^{\pi/2}(\cos t)^{n-2}\int_{\{x_e=0\}}\Psi(e,S)(\sin te+\cos t w)dV_{n-2}(w)dt\\
				=\frac{\omega_{n-2}}{4\omega_{n-1}}\int_{-\pi/2}^{\pi/2}(\cos t)^{n}\sin^2tdt\fint_{\{x_e=0\}}|S(w,w)|^2dV_{n-2}(w).
			\end{multline*}
			Using  \cite[Theorem~2.4]{bodmann-ehler-graf} and $S(e,\cdot)=0$ we see that
			$$\fint_{\{x_e=0\}}|S(w,w)|^2dV_{n-2}(w)=\frac{|\tr S|^2+2|S|^2}{(n-1)(n+1)}.$$
			We also have
			$$\frac{\omega_{n-2}}{4\omega_{n-1}}\int_{-\pi/2}^{\pi/2}(\cos t)^{n}\sin^2tdt=\frac14\fint_{S^{n-1}}x_e^2(1-x_e^2)dV_{n-1}=
			\frac{n-1}{4n(n+2)}.$$
			The result follows from combining the last three identities.
	\end{proof}}
	
	{
		\subsection{Area expansions}\label{area.expansion.appendix} Consider $h$ a symmetric $2$-tensor, $P$ a rational hyperplane with normal unit vector $\nu$, and $f\in C^{\infty}(T_P)$.  We denote by $O(q)$ a quantity whose absolute value is bounded by $C_n|q|$, where $C_n$ depends on $n$, $T_P$, and $g_0$, a flat metric on $T^n$.  Consider
		$$\Sigma=\{x+f(x)\nu:x\in T_P\}\quad\text{where }|f|_{C^2(T_P)}=O(|h|_{C^2}).$$
		Fix an orthonormal basis $\{e_i\}_{i=1}^{n-1}$ of $P$ and set $h^\top=h|_{P}$. We have 
		$$|h^\top|^2=\sum_{i,j=1}^{n-1} h(e_i,e_j)^2.$$
		We show that, with $g=g_0+h$ and assuming $|h|_{C^2}\leq 1$,
		\begin{multline}\label{area.expansion.cubic}
			\text{area}_g(\Sigma)=\text{area}_{g_0}(T_P)+\frac12\int_{T_P}\tr _P h\,dA+\frac12\int_{T_P}f\partial_\nu(\tr _P h)\,dA\\
			+\int_{T_P}h(\nabla f,\nu)\,dA+\frac12\int_{T_P}|\nabla f|^2\,dA
			+\int_{T_P}\left[\frac18\bigl(\tr _P h\bigr)^2-\frac14|h^\top|^2\right]dA\\+\int_{T_P}O(|h|^3_{C^2})\,dA.
		\end{multline}
		Consider $\phi:T_P\rightarrow T^n$, $\phi(x)=x+f(x)\nu$, and $\gamma=\phi^*(g)$. We have
		$$
		\gamma_{ij}=\delta_{ij}+\partial_if\cdot\partial_jf+\left(h(e_i,e_j)+\partial_jfh(e_i,\nu)+\partial_ifh(e_j,\nu)\right)\circ\phi+O(|h|^3_{C^2}).
		$$
		We use the matrix expansion
		$$
		\sqrt{\det(I+A)}=1+\frac12\tr A+\frac18(\tr A)^2-\frac14\tr (A^2)+O(|A|^3).
		$$
		With $A=\gamma-I$, we conclude 
		\begin{multline}\label{formula.phi}
			\text{area}_g(\Sigma)=\text{area}_{g_0}(T_P)+\frac12\int_{T_P}\tr _P h\circ\phi\,dA+\int_{T_P}h(\nabla f,\nu)\circ\phi\,dA\\
			+\frac12\int_{T_P}|df|^2\,dA+\int_{T_P}\left[\frac18\bigl(\tr _P h\circ\phi\bigr)^2-\frac14|h^\top|^2\circ\phi\right]dA+\int_{T_P}O(|h|^3_{C^2})\,dA.
		\end{multline}
		We have
		$$
		\sup_{0\leq s\leq 1}|\nabla^jh|\bigl(x+sf(x)\nu\bigr)=O(|h|_{C^2}),\qquad j=0,1,2,
		$$
		and so, using Taylor's formula, we obtain
		$$
		\tr _P h\circ\phi=\tr _P h+f\partial_\nu(\tr _P h)+O(|h|^3_{C^2}),
		$$
		$$
		h(\nabla f,\nu)\circ\phi=h(\nabla f,\nu)+O(|h|^3_{C^2}),
		$$
		and
		$$
		\frac18\bigl(\tr _P h\circ\phi\bigr)^2-\frac14|h^\top|^2\circ\phi=\frac18\bigl(\tr _P h\bigr)^2-\frac14|h^\top|^2+O(|h|^3_{C^2}).
		$$
		Substituting these three identities into \eqref{formula.phi} we obtain \eqref{area.expansion.cubic}.}
	\bibliographystyle{amsbook}

\end{document}